\documentclass[10pt,reqno]{amsart}
\usepackage[foot]{amsaddr}

% margin settings
\usepackage[hmarginratio=1:1,top=32mm,columnsep=20pt]{geometry} 
\geometry{left=40mm,right=40mm,top=40mm,bottom=40mm}
\setlength{\marginparwidth}{3cm}

% import packages
\usepackage[utf8]{inputenc}
\usepackage{amsmath}
\usepackage{graphicx}
\usepackage{amssymb}
\usepackage{esint}
\usepackage{color}
\usepackage{amsthm}
\usepackage{epsfig}
\usepackage[]{todonotes}
\usepackage{mathrsfs} 
\usepackage{xxcolor}%

\usepackage{algorithm}
\usepackage{algpseudocode}
\usepackage[pagebackref=false,linktocpage=true,colorlinks=true,linkcolor=Blue,citecolor=Green]{hyperref}
\usepackage{doi}
\usepackage{enumitem}
\usepackage{mathtools}
\usepackage{tikz}
\usetikzlibrary{arrows}
\usepackage[english]{babel}
\usepackage[square,numbers]{natbib}
%\usepackage{showlabels}

%%
%%
%% beginning of conditional fields concerning bibliography options
\iffalse 
\usepackage[autostyle]{csquotes}
\usepackage[
    backend=biber,
    style=numeric,
    firstinits = true,
    isnb=false,
    issn=false,
    url=false, 
    doi=true,
    eprint=true
]{biblatex}

\renewbibmacro{in:}{
  \ifentrytype{article}{}{
  \printtext{\bibstring{in}\intitlepunct}}}

\DeclareFieldFormat[article,unpublished,book]{pages}{#1}  

\DeclareFieldFormat[article]{title}{#1}
\DeclareFieldFormat[unpublished]{title}{#1}
\DeclareFieldFormat[misc]{title}{#1}
\fi 
%%
%%
%% end of conditional fields

% styles of Theorems/Lemmas/etc...
\newtheorem{theorem}{Theorem}
\newtheorem{proposition}[theorem]{Proposition}
\newtheorem{lemma}[theorem]{Lemma}

\newtheorem{definition}[theorem]{Definition}
\newtheorem*{theorem*}{Theorem}
\newtheorem*{maintheorem*}{Main Result}
\newtheorem*{theorem**}{{``Theorem''}}

%% redundant definitions, merge with those above

\theoremstyle{definition}

\newtheorem{ass}[theorem]{Assumption}
\newtheorem{remark}[theorem]{Remark}

\reversemarginpar

\providecommand{\customgenericname}{}
\newcommand{\newcustomtheorem}[2]{%
  \newenvironment{#1}[1]
  {%
   \renewcommand\customgenericname{#2}%
   \renewcommand\theinnercustomgeneric{##1}%
   \innercustomgeneric
  }
  {\endinnercustomgeneric}
}

\newcustomtheorem{customthm}{Assumption}
\newcustomtheorem{customthm2}{Definition}

% ??? tools for integration notation ???

\def\XXint#1#2#3{{\setbox0=\hbox{$#1{#2#3}{\int}$ }
\vcenter{\hbox{$#2#3$ }}\kern-.6\wd0}}

% color settings
\definecolor{Yellow}{rgb}{0.95,0.9,0.0} 
\definecolor{Red}{rgb}{0.8,0.1,0.1}
\definecolor{Green}{rgb}{0.1,0.65,0.2}
\definecolor{Blue}{rgb}{0.1,0.1,0.8}
\definecolor{Purple}{rgb}{0.7,0.1,0.7}
\definecolor{Grey}{rgb}{0.6,0.6,0.6}

%=========MACROS AND NEW COMMANDS=========

%_________Text-mode___________

%_________Color shortcuts

%_________ToDo command

%_________General latex options_________
\newcommand{\ov}{\overline}

%_________Numbers, Spaces and Domains_________

\newcommand{\domain}{{\mathbb{T}^d}}

%_________Calculus related_________

\newcommand{\m}{\,\mathrm{d}}				%use case: write \m\f{x}	for dx at the end of integral

%_________Operators etc_________

		%Fourier transform

%_________Probability etc_________

%_________Discretization etc_________
\newcommand{\Ghd}{G_{h,d}} 				%our discrete grid on Td

\newcommand{\Ih}{\mathcal{I}_{h}}		%Interpolation operator
		%Averaging interpolation operator

%_________Specific for setting_________	

				%scaled interaction potentials
\newcommand{\rhor}[1][ ]{{\rho_{#1}}}			%Solution to DK
\newcommand{\rhobr}[1][ ]{{\ov{\rho}_{#1}}}	%Intermediate MFL
			%Initial density

\newcommand{\empmeas}[1][ ]{{\mu_{{#1}}^{N}}}

\newcommand{\Ts}{{T_\oslash}}

%_________Latex internal_________	
		%scaled interaction potentials

%_________Random relicts_________
%\newcommand{\dH}[1][d-1]{\,\mathrm{d}\mathcal{H}^{#1}}
%\newcommand{\dS}{\,\mathrm{d}S}
%\newcommand{\dSij}{\,\mathrm{d}S_{ij}}
%\newcommand{\dnablachii}{\,\mathrm{d}|\nabla \chi_i|}
%\newcommand{\KTwoPh}{\mathcal{K}^{\mathrm{2ph}}}
%\newcommand{\KTrJ}{\mathcal{K}^{\mathrm{3j}}}
%\newcommand{\per}{\mathrm{per}}
%\newcommand{\aux}{\mathrm{aux}}

%_________Specific for MLMC setting_________
\newcommand{\lev}{\ell}

\newcommand{\IndSet}[1]{I_{#1}}
\newcommand{\Four}[2]{e^{i{#2}\cdot{#1}}}
\newcommand{\F}[1]{e^{i#1(\cdot)}}
\newcommand{\Grid}[2]{G_{#1,#2}}
\newcommand{\rhob}[1][]{\overline{\rho}_{#1}}
\newcommand{\EstML}{\mu_{MLMC}}

\newcommand{\btt}[1]{t_{\leftarrow}^{#1}}

\newcommand{\tfactor}{\kappa_t}

%_________Random relicts_________

\newcommand{\hl}{h_{\lev}}

%% time discretization

\newcommand{\Dt}{\tau}
\newcommand{\Dtl}{\tau_{\lev}}
\newcommand{\cfl}{\mu}
\newcommand{\Tgrid}[1][]{\mathcal{S}_{#1}}
\newcommand{\ths}{2}

%=========MATH ENVIRONMENTS=========

\allowdisplaybreaks[2] 		%1-4 for finer control of page breaks in amsmath environments
%[1] means allow page breaks, but avoid them as much as possible; 2,3,4 mean increasing permissiveness
%, the \\* command can be used to prohibit a pagebreak after a given line

\numberwithin{equation}{section}	%only count equations etc within a section
\numberwithin{theorem}{section}

%Hint: Use \intertext for short interjection of a few lines, keeping the alignment of the surrounding math environment;
%\shortintertext for less vertical spacing (i.e. for only a few words -> "and")

%Unknown/Custom package:
%\usepackage{delimdelim}

\allowdisplaybreaks[2]

% load auxiliary tex file tex
\usepackage{delimdelim}

%%%%%%%%%%%%%%%%%%%%%%%%%%%%%%%%
%%%%%%%%%%%%%%%%%%%%%%%%%%%%%%%%
%%%%%%%%%%%%%%%%%%%%%%%%%%%%%%%%
%%%%%%%%%%%%%%%%%%%%%%%%%%%%%%%%
%% beginning of document
\begin{document}

% insert Title/short title
\title[MLMC methods for models of Fluctuating Hydrodynamics]{Multilevel Monte Carlo methods for the Dean--Kawasaki equation from Fluctuating Hydrodynamics}

\author{Federico Cornalba$^*$}
\address{$^*$University of Bath, Claverton Down, BA2 7AY, Bath, United Kingdom}
\email{\href{mailto:fc402@bath.ac.uk}{fc402@bath.ac.uk}}
%\thanks{$^*$University of Bath, Claverton Down, BA2 7AY, Bath, United Kingdom. %\emph{E-mail address:} \href{mailto:fc402@bath.ac.uk}{fc402@bath.ac.uk}
%}
\author{Julian Fischer$^\dag$}
\address{$^{\dag}$Institute of Science and Technology Austria (ISTA), Am~Campus~1, 
3400 Klosterneuburg, Austria. 
%\\ \emph{E-mail address:} \href{mailto:julian.fischer@ista.ac.at}{julian.fischer@ista.ac.at}
}
\email{\href{mailto:julian.fischer@ista.ac.at}{julian.fischer@ista.ac.at}}

% insert abstract
\begin{abstract}
Stochastic PDEs of Fluctuating Hydrodynamics are a powerful tool for the description of fluctuations in many-particle systems. 
In this paper, we develop and analyze a Multilevel Monte Carlo (MLMC) scheme for the Dean--Kawasaki equation, a pivotal representative of this class of SPDEs. 
We prove analytically and demonstrate numerically that our MLMC scheme provides a significant reduction in computational cost (with respect to a standard Monte Carlo method) in the simulation of the Dean--Kawasaki equation.
Specifically, we link this reduction in cost to having a sufficiently large average particle density, and show that sizeable cost reductions can be obtained even when we have solutions with regions of low density. 
Numerical simulations are provided in the two-dimensional case, confirming our theoretical predictions.

Our results are formulated entirely in terms of the law of distributions rather than in terms of strong spatial norms: this crucially allows for MLMC speed-ups altogether despite the Dean--Kawasaki equation being highly singular.

\end{abstract}
\thanks{{\bfseries Funding}: Both authors gratefully acknowledge funding from the Austrian Science Fund (FWF) through the project F65}
\maketitle

{\small{\bfseries Key words}. Multilevel Monte Carlo methods; Dean--Kawasaki equation; Fluctuating Hydrodynamics; Many-particle Systems; Noise coupling.}

{\small{\bfseries MSc codes}. 65C05, 60H15, 35R60, 65N06, 82M36, 82C22.
}

\section{Introduction}
In the regime of large particle numbers, the behavior of interacting particle systems often gives rise to a classical PDE description, for instance in form of the classical equations of continuum mechanics. For medium-sized particle systems consisting of only, e.g., $10^5$--$10^9$ particles, this idealized description often becomes insufficient, as thermal fluctuations may begin to impact the behavior.
The theory of Fluctuating Hydrodynamics augments the classical PDEs of continuum mechanics with suitable noise terms to account for thermal fluctuations, giving rise to SPDE models; we refer the reader to Landau and Lifshitz \cite{LandauLifshitzVol6},  Spohn \cite{SpohnBook}, and te Vrugt, L\"owen and Wittkowski \cite{te2020classical}.

In this work, we are concerned with the numerical approximation of one of the most basic equations of Fluctuating Hydrodynamics, the \emph{Dean--Kawasaki equation}
\begin{align}
\partial_t \rho = \frac{1}{2}\Delta \rho + \nabla \cdot (\rho (\nabla V \ast \rho)) + N^{-1/2} \nabla \cdot \left(\sqrt{\rho}\xi\right).
\label{DK}
\end{align}
It describes the effective behavior of the density $\rho$ of a system of $N$ weakly interacting diffusing particles $\{X_i(t)\}_{i=1}^{N}$ in the regime of large particle numbers $N\gg 1$. Here, $V$ is a (sufficiently regular) interaction potential and $\xi$ denotes vector-valued space-time white noise. While the McKean-Vlasov equation $\partial_t \bar\rho =\frac{1}{2}\Delta \bar\rho + \nabla \cdot (\bar\rho (\nabla W \ast \bar\rho))$, obtained formally in the limit $N\rightarrow \infty$, describes the mean-field limit profile $\bar\rho$ of the particle system, the Dean--Kawasaki equation \eqref{DK} for $N\gg 1$ in addition captures the law of the density fluctuations due to the finite number of particles.

The Dean--Kawasaki equation \eqref{DK} is a highly singular SPDE; namely, it is even too singular to be renormalized by approaches like regularity structures or paracontrolled calculus \cite{HairerRegularityStructures,GubinelliImkellerPerkowskiParacontrolled}. As shown in \cite{KonarovskyiVonRenesse,KonarovskyiRenesse2}, the only martingale solutions to \eqref{DK} are in fact given by the empirical measures of the underlying particle system, that is 
$
\rho(x,t) \equiv \empmeas[t](x)  := N^{-1}\sum_{i=1}^{N}{\delta(x-X_i(t))}.
$
Nevertheless, a justification of SPDEs related to \eqref{DK} has recently been given in \cite{FehrmanGess,DirrFehrmanGess} in terms of large-deviation principles. Justifications for Dean--Kawasaki type models with regularized noise have been developed in \cite{DirrFehrmanGess,djurdjevac2024weak}.

Recently, the authors and collaborators \cite{cornalba2021dean,CornalbaFischerIngmannsRaithel} have shown that the Dean--Kawasaki equation may be viewed as a recipe for accurate and efficient simulations of the density fluctuations in the interacting particle system: When applying a formal spatial semi-discretization to \eqref{DK}, the law of density fluctuations predicted by the discretization accurately describes the law of density fluctuations in the underlying particle system, as long as the grid size $h$ is such that $Nh^d \gg 1$ ($d$ is the spatial dimension) -- that is, as long as on average there is substantially more than one particle per grid cell.

Nevertheless, computing statistical properties of particle density fluctuations $\bar\rho$ such as variances $\mean{| N^{1/2}\int (\empmeas[T]-\overline{\rho}^T)(x) \varphi(x)\m x|^2}$ (or, more generally, of random variables of the form 
\begin{align}\label{particle_fluct}
Q = \normalsize\psi\left(N^{1/2}\int (\mu^T_N - \overline{\rho}^T)(x)\varphi(x)\m x  \right)
\end{align}
for sufficiently regular test functions $\psi,\varphi$)
via discretizations of \eqref{DK} is still a computationally demanding task, as in addition to solving an SPDE one needs to sample over many realizations of the noise. Multilevel Monte Carlo (MLMC) methods are a powerful numerical tool that often allows to offset a large amount of the sampling cost by performing the computation for most samples only on a much coarser numerical grid. Some of the first applications of MLMC methods have been in the context of statistical physics \cite{brandt1994optimal} and stochastic differential equations in mathematical finance \cite{Giles}; they have since found widespread applications for random and stochastic PDEs, see for instance \cite{BarthLangSchwab,BarthSchwabZollinger,
CharrierScheichlTeckentrup,CliffeGilesScheichlTeckentrup,doi:10.1137/18M1175239,GilesReview,GilesReisinger,khodadadian2019multilevel}.

In this work, we develop and analyze MLMC methods to approximate $\mean{Q}$, for $Q$ of type \eqref{particle_fluct}, using discretizations of the Dean--Kawasaki equation \eqref{DK}. 
To the best of our knowledge, our present work is one of the first mathematical results on Multilevel Monte Carlo methods both for singular stochastic PDEs, and for stochastic PDEs involving multiplicative space-time white noise.

\subsection{Summary of main results}

In order to approximate $\mean{Q}$ (see \eqref{particle_fluct}), we set up a sequence of random variables $(P_{\ell})_{\ell}$ associated with space-time discretizations of \eqref{DK} over levels $\ell=0,\dots,L$ (the higher the $\lev$, the smaller the spatial grid size $h_\ell$ and time step\footnote{since we will link $\hl$ and $\tau_{\ell}$ via a standard Courant-Friedrichs-Lewy (CFL for short) condition, many of our considerations will be stated -- for notational convenience -- just in terms of the spatial discretization parameters $(\hl)_{\ell}$.} $\tau_\ell$, and the more accurately $P_\ell$ approximates $Q$). A key tool of our analysis is to couple the noises of discretized versions of \eqref{DK} on consecutive levels $\ell-1,\ell$. We provide two different ways of doing this. The first way is to couple the noises over the Fourier frequencies, thus having the noise on level $\ell-1$ share all of its frequencies with the noise on level $\ell$ (``Fourier coupling", cfr \& \ref{secFourierCoupling}).
The second way is to construct the noise on level $\ell-1$ using only local spatial information coming from the noise on level $\ell$ (``Right-Most Nearest Neighbours (NN) coupling", cfr \&\ref{secRM}).

We can now informally state our main result:
\begin{theorem}\label{main_res}
Let $\varepsilon > 0$ be a given accuracy. Consider a sequence of levels for which the finest grid size $h_{\min}$ satisfies $h^2_{\min} \lesssim \varepsilon$. Under the technical assumptions \ref{N_h_scaling}--\ref{reg_mfl}--\ref{ass_stability}--\ref{reg_test_func}--\ref{init_datum}, which in particular require the average particle density $Nh^d_{\min}$ to satisfy  
\begin{align}\label{informal_lower_bound_density}
Nh^d_{\min} \gtrsim h_{\min}^{-\beta},\qquad
\mbox{where }\beta :=  
\left\{
\begin{array}{ll}
4,  & \mbox{for Fourier coupling, cfr \&\ref{secFourierCoupling}} \\
2, & \mbox{for NN coupling, cfr \&\ref{secRM}} \\
\end{array}
\right.
\end{align}
we can set up a Multilevel Monte Carlo estimator $\mu_{MLMC}$ for the simulation of \eqref{DK} which achieves a mean-square error $\mean{\left|\mu_{MLMC} - \mean{Q}\right|^2}$ of order $\varepsilon^2$, while carrying a computational cost $C_{MLMC}$ bounded by
\begin{align}\label{MLMC_complexity}
\mean{C_{MLMC}} \lesssim 
\left\{
\begin{array}{ll}
\varepsilon^{-2}  & \mbox{for } d = 1\mbox{ and Fourier coupling, cfr \&\ref{secFourierCoupling}} \\
\varepsilon^{-2}\cdot(\log\varepsilon)^2, & \mbox{for } d = 2\mbox{ and Fourier coupling, cfr \&\ref{secFourierCoupling}} \\
\varepsilon^{-2}\cdot\varepsilon^{-(d-2)/2}, & \mbox{for } d \geq 3\mbox{ and Fourier coupling, cfr \&\ref{secFourierCoupling}}\\
\varepsilon^{-2}\cdot\varepsilon^{-d/2}, & \mbox{for } d \geq 1\mbox{ and NN coupling, cfr \&\ref{secRM}} \\
\end{array}
\right.
\end{align}
as opposed to the standard MC method, whose cost bound is $$\mean{C_{MC}} \lesssim \varepsilon^{-2}\cdot \varepsilon^{-d/2-1}.$$ 
%In particular, this Multilevel Monte Carlo method achieves a variance reduction of factor 
%\begin{align*}
%F \propto h_{\min}^{-[(d + 2)\wedge 4]}\cdot |\log(h_{\min})|^{-1}
%\end{align*}
%\todo{add $L^\star$} with respect to the associated basic Monte Carlo method with the same grid size $h_{\min}$.
\end{theorem}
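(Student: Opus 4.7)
The plan is to deploy the classical MLMC framework of Giles: write $\mathbb{E}[Q]\approx \mathbb{E}[P_L] = \mathbb{E}[P_0] + \sum_{\ell=1}^{L} \mathbb{E}[P_\ell - P_{\ell-1}]$ and estimate each increment by $M_\ell$ independent coupled samples of $(P_\ell, P_{\ell-1})$. The total mean-square error splits into a squared bias $|\mathbb{E}[P_L - Q]|^2$ and a variance sum $\sum_{\ell=0}^{L} M_\ell^{-1}\mathrm{Var}(P_\ell - P_{\ell-1})$ (with the convention $P_{-1}\equiv 0$). Balancing these against the per-sample cost $C_\ell \sim \hl^{-(d+2)}$, which reflects the CFL-induced scaling $\Dtl\sim \hl^2$, via the standard Lagrange optimization then yields the complexity bounds \eqref{MLMC_complexity}.

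For the bias, I would invoke the quantitative fluctuation estimates already established for spatial semi-discretizations of \eqref{DK} in \cite{cornalba2021dean,CornalbaFischerIngmannsRaithel}: in the regime $N\hl^d \gg 1$ ensured by \eqref{informal_lower_bound_density}, these give a weak rate $|\mathbb{E}[P_\ell - Q]| \lesssim \hl^2 + \Dtl$ for test functionals of the form \eqref{particle_fluct}. Combined with $\Dtl\sim \hl^2$ this produces weak order $\alpha=2$, which is exactly what is needed so that the requirement $h_{\min}^2 \lesssim \varepsilon$ makes the bias $\lesssim \varepsilon$.

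The heart of the argument, and the point at which the two coupling schemes diverge, is the per-level variance estimate $\mathrm{Var}(P_\ell - P_{\ell-1}) \lesssim \hl^{2\kappa}$. Here I would couple the noises on levels $\ell-1$ and $\ell$ according to either Section~\ref{secFourierCoupling} (Fourier) or Section~\ref{secRM} (NN), subtract the two discretized Dean--Kawasaki equations, and control the resulting difference of the renormalized fluctuation fields by a discrete linear stability/energy argument in an $H^{-1}$-type norm adapted to the test function $\varphi$. The residual driving this difference is precisely the noise mismatch: for Fourier coupling it consists of the truncated high-frequency modes present only on level $\ell$, contributing at the level of fluctuations an error of squared size $\hl^4$; for NN coupling it encodes a first-order local averaging error of squared size $\hl^2$. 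Passing from this density difference to $P_\ell-P_{\ell-1}$ via the Lipschitz function $\psi$ in \eqref{particle_fluct} then gives $\kappa=2$ (Fourier) and $\kappa=1$ (NN). A delicate point is keeping the multiplicative coefficient $\sqrt{\rho}$ Lipschitz and bounded away from zero along the coupled trajectories: this necessitates the stopping-time device $\Ts$ introduced in the paper and is precisely why the density lower bound \eqref{informal_lower_bound_density} is required, the stronger exponent $\beta=4$ for Fourier coupling reflecting the more nonlocal nature of that noise reconstruction. I expect this variance estimate, and in particular propagating the coupling residual through the nonlinear drift $\nabla\cdot(\rho(\nabla V \ast \rho))$ without degrading the sharp $\hl^{2\kappa}$ rate, to be the main technical obstacle.

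Finally, feeding the triples $(\alpha,2\kappa,\gamma)=(2,4,d+2)$ for Fourier coupling and $(2,2,d+2)$ for NN coupling into Giles' complexity theorem recovers the four regimes in \eqref{MLMC_complexity}: when $2\kappa>\gamma$ (Fourier, $d=1$) the optimal rate $\varepsilon^{-2}$ is attained, when $2\kappa=\gamma$ (Fourier, $d=2$) a logarithmic correction $(\log\varepsilon)^2$ appears, and when $2\kappa<\gamma$ (the remaining cases) the polynomial overhead $\varepsilon^{-(\gamma-2\kappa)/\alpha}$ emerges, reproducing exactly the stated bounds. Comparison with the monolithic single-level cost $\varepsilon^{-2}\cdot\varepsilon^{-(d+2)/2}$ then quantifies the MLMC gain announced in the theorem.
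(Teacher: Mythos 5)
Your proposal follows essentially the same route as the paper: establish the weak rate $\alpha=2$ from \cite{cornalba2021dean}, the cross-level variance decay $h_\ell^{4}$ (Fourier) / $h_\ell^{2}$ (NN) via the coupled-noise mismatch (this is the paper's Proposition \ref{PropBoundVarLevels}, proved there by an It\^o/martingale fourth-moment argument on the tested fluctuation rather than your $H^{-1}$-type energy estimate on the density difference, but identifying the same high-frequency versus local-averaging residuals), together with the per-sample cost exponent $\gamma=d+2$, and then apply Giles' complexity theorem, exactly as the paper does. The only superfluous element is your concern about propagating the residual through the drift $\nabla\cdot(\rho(\nabla V\ast\rho))$: the paper restricts to $V\equiv 0$, so that term never enters.
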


As a key challenge for our analysis, unlike in Multilevel Monte Carlo approaches for parabolic SPDEs in the literature \cite{BarthLangSchwab,GilesReisinger}, the highly singular nature of \eqref{DK} equation prevents strong (pathwise) convergence of numerical solutions in the limit of arbitrarily fine discretizations. Instead, convergence takes place only up to a minimal $\underline{h}\gg N^{-1/d}$, below which the sequence begins to diverge in $C^0$. For an illustration of this lack of regularity and convergence of solutions, we refer to Figures~\ref{trajectory_snapshots_finest}--\ref{init_final_noisy_profile_rough}.
Moreover, while microscopic fluctuations in \eqref{DK} may be drastic, the macroscopic impact of fluctuations in \eqref{DK} -- i.e., the impact of fluctuations on weighted spatial averages like $\int \rho^t(x) \varphi(x) \m x$ -- is rather small due to the $N^{-1/2}$ prefactor of the noise. This discrepancy is directly related to the singular nature of the SPDE \eqref{DK}. In particular, this forces us to work with stochastically weak convergence estimates -- i.e., convergence estimates for the law of distributions -- to achieve the MLMC cost bound \eqref{MLMC_complexity}.

As with any MLMC method, we need to provide suitable bounds for the systematic error $|\mean{P_\ell- Q}|$ and the cross-level variance $Var[P_{\ell}-P_{\ell-1}]$ in order to have a cost reduction. Specifically, we have the systematic error bound 
 \begin{align}\label{systematic_error_bound}
|\mean{P_\ell - Q}| \lesssim (Nh^d_\ell)^{-1} + h^2_{\ell}
\end{align} 
(which is a relatively straightforward consequence of results in \cite{cornalba2021dean}) and a non-trivial, similarly-looking bound for $Var[P_{\ell}-P_{\ell-1}]$ (cfr. Proposition \ref{PropBoundVarLevels}), which reads
\begin{align}\label{informal_Var_bound}
Var[P_\ell - P_{\ell-1}]
\lesssim (Nh^d_{\ell})^{-1} + 
h_{\ell}^{\beta},
\end{align}
where $\beta$ is defined in \eqref{informal_lower_bound_density}.
Apart from some polynomial numerical error in $h_{\ell}$, both of the bounds \eqref{systematic_error_bound}--\eqref{informal_Var_bound} feature the inverse of the average particle density $Nh^d_\ell$: crucially, these bounds are geometrically decaying in the level $\ell$ -- and therefore conform to the assumptions of the general MLMC complexity theorem \cite[Theorem 2.1]{GilesReview} -- so long as the average particle density is sufficiently large on all levels required to achieve the systematic error $O(\varepsilon)$.
The presence of the term $(Nh^d_\ell)^{-1}$ in \eqref{systematic_error_bound}--\eqref{informal_Var_bound} is rooted in the singular nature of the Dean--Kawasaki equation, as discussed above.
However, and most importantly, having a large density requirement is not a limitation, but is in fact absolutely natural: it corresponds to a regime in which our SPDE models are computationally more efficient to simulate than the underlying particle systems.

Finally, in addition to the general complexity bound \eqref{MLMC_complexity}, we also show in Proposition \ref{PropVarFactor} that our MLMC method achieves a variance reduction factor $F$ (with respect to the standard MC method) given by 
\begin{align}\label{factor_var_red}
F \propto 
\left\{
\begin{array}{ll}
\left[(Nh^d_{\min})^{-1} + h^{(d+2)\wedge 4}_{\min} L^*\right]^{-1}|\log(h_{\min})|^{-1},  & \mbox{for Fourier coupling, cfr \&\ref{secFourierCoupling}} \\
\left[(Nh^d_{\min})^{-1} + h^2_{\min}\right]^{-1}|\log(h_{\min})|^{-1}, & \mbox{for NN coupling, cfr \&\ref{secRM}} \\
\end{array}
\right.
\end{align}
where we have set
$L^* = 1$ if $d\neq 2$, and $L^* = |\log(h_{\min})|$ if $d=2$.
\begin{remark}
While the bound \eqref{factor_var_red} is closely tied to the results of Theorem \ref{main_res}, we choose to present it on its own: the reason for this is that \eqref{factor_var_red} shows an explicit and interpretable dependence on the average particle density $Nh^d_{\min}$. 
\end{remark}
\begin{remark}
For simplicity, we focus on the non-interacting case $V\equiv 0$ in \eqref{DK} (in this case, $\bar\rho^t \equiv \mean{\empmeas[t]}$), although we expect that with the ideas of \cite{CornalbaFischerIngmannsRaithel} one could generalize the result to interacting particles. Moreover, we limit ourselves to the analysis of a finite difference scheme; due to the lack of regularity of solutions (see Figure~\ref{init_final_noisy_profile_rough} for a plot of a sample path), a finite element scheme would not offer compelling advantages.
\end{remark}

\subsection{Structure of the paper}
Notation is introduced in Section \ref{DK_Setup} (discrete Dean--Kawasaki model) and Section \ref{MLMC_sec} (Multi Level Monte Carlo setup). Assumptions, statements of main results and proofs of all main results are given in Sections \ref{sec_assumptions}, \ref{sec_Main_Result}, and  \ref{sec_proofs} respectively. Numerical results are discussed in Section \ref{num_sim}. Final considerations are given in Section \ref{summary_section}.

\section{Basic notation}\label{DK_Setup}

We work with uniformly spaced grids on the spatial domain $\domain$ of the type
$
\Grid{h}{d}:= \{-\pi ; -\pi + h; \dots; \pi - h\}^d,
$
for any compatible mesh-size $h>0$.
We denote by 
$
(f_h,g_h)_h := h^d\sum_{x\in \Grid{h}{d}}{f_h(x)\cdot g_h(x)}
$
(respectively, $\|\cdot\|_h$) the inner product (respectively, the norm) on $L^2(\Grid{h}{d})$. 
We denote by $\Ih\colon C(\domain)\rightarrow L^2(\Grid{h}{d})$ the pointwise interpolation operator.
Furthermore, $\Delta_h$, $\nabla_h$ denote discrete counterparts of Laplace and gradient operators (precise details are given later on). As for time, we work on a fixed interval $[0,T]$, and consider uniform partitions of the type
$
\Tgrid[\Dt] = \{0,\Dt,\dots, T\},
$
for a compatible time-step $\Dt>0$.

Whenever we want to provide a lower (respectively, upper) bound which holds up to a constant depending on a specific parameter $a$, we write $\gtrsim_a$ (respectively, $\lesssim_a$).

\begin{definition}[Space-time discretised Dean--Kawasaki model approximating an underlying system of $N$ non-interacting Brownian particles in $\domain$]\label{def_discrete_DK}
We denote by $\rho_{h,\Dt}$ the solution to the following finite-difference the Dean--Kawasaki equation: 
%dynamics approximating the dynamics of $N$ independent Brownian motions in $\domain$. Namely 
\begin{align}\label{FullyDiscreteDK}
\rho^{m\Dt}_{h,\Dt} & = \rho^{(m-j)\Dt}_{h,\Dt} + \Dt\sum_{j=0}^{1}{b_j \frac{1}{2}\Delta_h \rho^{(m-j)\Dt}_{h,\Dt}} \nonumber\\
& \quad \quad + N^{-1/2}\nabla_h \cdot \left(\sum_{\substack{(y,r)\in \\  (\Grid{h}{d},\{1,\dots,d\})}}{\!\!\sqrt{\left[\rho_{h,\Dt}^{(m-1)\Dt}\right]^+}e^d_{h,y,r}}\,\Delta\beta^{m-1}_{(y,r)}\right),
\end{align}
started from $\rho_{h,\Dt}^{0} = \rho_{h,0}$, where $[y]^+ := \max\{y;0\}$, and where 
\begin{itemize}
  \item $\Delta\beta^{m-1}_{(y,r)} := \beta_{(y,r)}(m\Dt) - \beta_{(y,r)}((m-1)\Dt)$ are increments for a family of independent Brownian motions $\{\beta_{(y,r)}\}_{(y,r) \in (\Grid{h}{d},\{1,\dots,d\})}$;
  \item $\rho_{h, 0}$ is a suitable -- possibly random -- initial profile;
  \item $\{e^{d}_{h, y, r}\}_{(y,r) \in (\Grid{h}{d},\{1,\dots,d\})}$ is the orthonormal basis of $[L^{2}(G_{h,d})]^d$ given by 
$
    e^{d}_{h,y, r}(z) := h^{-d/2} \mathbf{1}_{y = z} f_{r},
$
  where $\{f_{r}\}_{r=1}^{d}$ is the canonical basis of $\mathbb{R}^d$.%, and 
%  \begin{align*}
%  \delta_{y = z} := 
%  \left\{
%\begin{array}{rl}
% 1 & \mbox{if } y = z \\
% 0 & \mbox{otherwise }
%\end{array}
%\right.
%  \end{align*}
  \item $b_0, b_1$ are fixed, non-negative weights such that $b_0 + b_1 = 1$. These weights determine the specific nature of the (one-step) time discretisation scheme.
\end{itemize} 
\end{definition}

\section{MLMC framework}\label{MLMC_sec}
The goal is to compute statistical properties of the particle density fluctuations. Specifically, we aim to compute the expected value of random variables of the form \eqref{particle_fluct}
\begin{align*}%\label{particle_fluct}
Q = \normalsize\psi\left(N^{1/2}\int (\mu^T_N - \overline{\rho}^T)(x)\varphi(x)\m x  \right)
\end{align*}
for sufficiently regular test functions $\psi,\varphi$. In order to approximate $\mean{Q}$, we can compute the expected value of the quantity
\begin{align}\label{DiscrFluctuations}
P := \psi\left(N^{1/2}\left(\rho^T_{h,\Dt} - \mean{\rho^T_{h,\Dt}},\mathcal{I}_{h}\varphi\right)_{h}\right),
\end{align}
where $\rho_{h,\Dt}$ is the solution to the discrete Dean--Kawasaki model \eqref{def_discrete_DK} with space (respectively, time) discretisation parameter $h$ (respectively, $\tau$).
We think of the parameters $(h,\tau)$ in the random variable $P$ introduced in \eqref{DiscrFluctuations} as being the smallest in a sequence of space/time parameters (so, effectively, $h = h_{\min}, \tau = \tau_{\min}$): therefore, we   
 construct a MLMC scheme for approximating 
$\mean{P}$ by using the auxiliary quantities 
\begin{align}\label{eqn_MLMCLevels}
P_{\lev} & := \psi\left(N^{1/2}\left(\rho^T_{\hl,\Dtl} - {\rhob[\hl,\Dtl]^T},\mathcal{I}_{\hl}\varphi\right)_{\hl}\right),
\end{align}
on levels $\lev = 0,\dots, L$, and for some $\{(\hl , \Dt_{\lev}) \}_{\ell = 0}^{L}$. Since the deterministic dynamics of \eqref{FullyDiscreteDK} is just a finite-difference approximation of the Laplacian operator, it is natural to define the sequences $\{\hl\}_{\ell = 0}^{L}$ and $\{\Dt_{\lev}\}_{\lev=0}^{L}$ in terms of a Courant-Friedrichs-Lewy (CFL for short) ratio.
Namely, we choose $\{\hl\}_{\ell = 0}^{L}$ and $\{\Dt_{\lev}\}_{\lev=0}^{L}$ to be geometric progressions with fixed CFL ratio across levels. That is, we assume
\begin{align}\label{cfl_preserved}
\Dt_{\lev}/\hl^2 = \cfl,\qquad \forall \lev \in \{0,\dots,L\},
\end{align}
for some fixed $\cfl>0$.
The ratio of the sequence $\{\hl\}_{\ell = 0}^{L}$ (which, by \eqref{cfl_preserved}, also uniquely determines the ratio of the sequence $\{\Dt_{\lev}\}_{\lev=0}^{L}$) will depend on the specific choice of coupling across levels, as discussed in Subsections \ref{secFourierCoupling} and \ref{secRM} below. 
\begin{remark}
Upon imposing sensible assumptions, there would be no harm in also letting the ratios $\Dt_{\lev}/\hl^2$ in \eqref{cfl_preserved} depend on the level $\lev$. We do not do this, as this would entail further notational burden.
\end{remark}
%\begin{align}\label{hs_and_ts}
%(\hl, \Dt_{\lev}) & := \left( h_{\min} \cdot \sfactor^{L-\lev}, \tau_{\min} \cdot \tfactor^{L-\lev}\right), \qquad \lev = 0,\dots, L,
%\end{align}
%for some $\sfactor,\tfactor\in \mathbb{N}$ to be specified. 

With this notation, $P_{L} = P$, $(h_{\min}, \Dt_{\min}) = (h_L, \Dt_L)$, and $(h_{\max}, \Dt_{\max}) = (h_{0}, \Dt_{0})$.
%We use $\cfl$ to denote the CFL ratio $\cfl := \Dt/h^2$ (obvious extensions for subscripts $\lev,\min,\max$). 
When there is no ambiguity, we may choose to simplify the notation related to the level $\lev$: for instance, we abbreviate 
$
\psi(N^{1/2}(\rho^T_{\lev} - {\rhob[\lev]^T},\varphi_{\lev})_{\lev}) \equiv \psi(N^{1/2}(\rho^T_{\hl,\Dtl} - {\rhob[\hl,\Dtl]^T},\mathcal{I}_{\hl}\varphi)_{\hl}).
$

We consider two alternative ways to couple the noise on consecutive levels $\lev-1,\lev$. 

\subsection{First coupling: Fourier coupling}\label{secFourierCoupling}
We split the spatial grid in three at each level increment (i.e., $h_{\lev - 1} = 3h_{\lev}$).
We write the stochastic noise of \eqref{FullyDiscreteDK} using a Fourier expansion in space, and standard coupling in time. 
In words, at the coarse level $\lev-1$:
\begin{itemize}
\item all Fourier frequencies coming from 
the finer level $\lev$ are reused, and 
\item noise increments are obtained by summing up noise increments at level $\lev$. 
\end{itemize}
Explicitly, denoting the set of frequencies
$
\IndSet{\lev} := \{-\pi h_{\lev}^{-1}; -\pi h_{\lev}^{-1} + 1; \dots; \pi h_{\lev}^{-1} - 1\}^d,
$
and setting $A := \{-\pi h_{\lev}^{-1};0\}^d$, 
the noise increments at levels $\lev-1$ and $\lev$ are given by 
\begin{align*}
& W_{\lev}(x,t + \tau_{\lev}) - W_{\lev}(x,t)\\ 
& \quad := \sum_{(\xi,r) \in (\IndSet{\lev},\{1,\dots,d\})} \Four{\xi}{x}f_r (\beta_{\xi,r}(t + \tau_{\lev}) - \beta_{\xi,r}(t )),\quad \mbox{for }(x,t)\in  \Grid{\lev}{d}\times\Tgrid[\lev],\nonumber\\
& W_{\lev-1}(x,t+\tau_{\lev-1}) - W_{\lev-1}(x,t) \\
& \quad := \sum_{j=0}^{\tfactor-1}\sum_{(\xi,r) \in (\IndSet{\lev-1},\{1,\dots,d\})} \Four{\xi}{x}f_r\left[\beta_{\xi,r}(t + (j+1)\tau_{\lev}) - \beta_{\xi,r}(t + j\tau_{\lev})\right],\nonumber\\
& \quad \quad \quad \mbox{for }(x,t)\in  \Grid{\lev-1}{d}\times\Tgrid[\lev-1],\nonumber
\end{align*}
where 
$$
\left\{(\beta_{\xi,r},\beta_{-\xi,r})\right\}_{(\xi,r) \in (\IndSet{\lev-1}\setminus A,\{1,\dots,d\})} \cup \{\beta_{\xi,r}\}_{(\xi,r) \in (A,\{1,\dots,d\})} =: \mathcal{B}_{\IndSet{\lev-1}\setminus A} \cup \mathcal{B}_{A}
$$ 
is an independent family with complex conjugate couples of Brownian motions for the set $\mathcal{B}_{\IndSet{\lev-1}\setminus A}$, and standard real-valued Brownian motions for the set $\mathcal{B}_{A}$.

\begin{remark}\label{distribution_W_ell}
Note that, for each $\ell$, the noise increments $W_\ell$ have the same distribution as the noise increments of the noise prescribed in \eqref{FullyDiscreteDK} for correspoding $h_\ell,\tau_\ell$, irrespective of whether $W_\ell$ is associated with the ``fine" or the ``coarse" discretisation in a given pair of discretisations. In particular, this ensures that the Multilevel Monte Carlo estimator for $P$ is unbiased: specifically, we have that 
$
\mean{P_L} = \mean{P_{0}} + \sum_{\ell= 1}^{L}{\mean{P_\ell - P_{\ell-1}}}.
$
\end{remark}

\subsection{Second coupling: Right-Most Nearest Neighbours (NN) coupling}\label{secRM}
 While the coupling in time is the same as in the previous case, coupling in space is obtained by summing up noise increments in a ``Right-Most" neighbourhood of each spatial point. Specifically, splitting the spatial grid in two at each level increment (i.e., $h_{\lev-1} = 2h_{\lev}$), if we set $B^{\rightarrow}_{\lev-1}(x):=\{x+h_\lev v\colon v \in \{0;1\}^d\}$ for each $x\in \Grid{\lev-1}{d}$, we define
\begin{align*}
& W_{\lev}(x,t + \tau_{\lev}) - W_{\lev}(x,t) \\
& \quad := \sum_{(y,r) \in  (\Grid{\lev-1}{d},\{1,\dots,d\})} e^d_{h_{\lev},y, r}(x)(\beta_{y,r}(t + \tau_{\lev}) - \beta_{y,r}(t )), \quad \mbox{for }(x,t)\in  \Grid{\lev}{d}\times\Tgrid[\lev],\nonumber\\
& W_{\lev-1}(x,t+\tau_{\lev-1}) - W_{\lev-1}(x,t) \\
& \quad := \sum_{(y,r) \in  (\Grid{\lev-1}{d},\{1,\dots,d\})}  e^d_{h_{\lev-1},y, r}(x) \sum_{j=0}^{\tfactor-1}(\tilde{\beta}_{y,r}(t + (j+1)\tau_{\lev}) - \tilde{\beta}_{y,r}(t + j\tau_{\lev})), \nonumber\\
& \quad \quad \quad \mbox{for }(x,t)\in  \Grid{\lev-1}{d}\times\Tgrid[\lev-1],\nonumber
\end{align*}
where $\tilde{\beta}_{y,r} := 2^{-d/2}\sum_{z\in B^{\rightarrow}_{\lev-1}(y)}{\beta_{z,r}}$, with $\{\beta_{\lev,z,r}\}_{(z,r) \in (\Grid{\lev-1}{d},\{1,\dots,d\})}$ being standard independent Brownian motions. 

The contents of Remark \ref{distribution_W_ell} apply verbatim also for this coupling.

\begin{remark}\label{avoid_right_bias}

As suggested by one of the referees, the coupling in Subsection \ref{secRM} could, alternatively, be set up in a symmetric way:  
this would entail keeping the coupling as is, but using the shifted grid points $\{-\pi + h/2;-\pi+(3/2)h;\dots;\pi-h/2\}^d$.

\end{remark}

\section{Assumptions}\label{sec_assumptions} 
We work under the following assumptions.
\begin{ass}[Parameter scaling]\label{N_h_scaling} Let $L\in\mathbb{N}$ be the number of levels being considered, and let $h_{\min}$ be the smallest associated grid size. 
We choose $N$ to be sufficiently large so that the following scaling holds
\begin{align}\label{ScalingNandH}
Nh_{\ell}^d  & \gtrsim h_{\ell}^{-\beta} (1\vee \cfl^{\ths}) \left| \log{h_{\ell}} \right|, \qquad \ell \in \{0,\dots,L\}, 
\end{align}
where $\cfl := \tau_\ell/h^2_\ell$ and where 
\begin{align*}
\beta :=  
\left\{
\begin{array}{ll}
4,  & \mbox{for Fourier coupling, cfr \&\ref{secFourierCoupling}} \\
2, & \mbox{for NN coupling, cfr \&\ref{secRM}} \\
\end{array}
\right.
\end{align*}
\end{ass}

\begin{ass}[Continuous Mean Field Limit]\label{reg_mfl}
We assume the continuous mean field limit $\rhobr^t$ to be strictly positive and bounded ($0<\rho_{\min} \leq \overline{\rho}^t(x) \leq \rho_{\max} $), and satisfying $\rhobr^t\in C^{R+3}$ for some $R > 4 + d/2$ in the case of Fourier coupling (Subsection \ref{secFourierCoupling}), or $\rhobr^t \in C^{2}$ in the case of Right-Most Nearest Neighbours coupling (Subsection \ref{secRM}).
\end{ass}

\begin{ass}[Discrete Dean--Kawasaki dynamics]\label{ass_stability} With regards to \eqref{FullyDiscreteDK}: 
i) the operators $\Delta_h$ and $\nabla_h$ are second-order finite difference operators in the spatial discretisation; 
ii) the couples $\{h_\lev, \tau_\lev\}_\ell$ are such that the noise-less version of the scheme \eqref{FullyDiscreteDK} (i.e., the discrete Mean Field Limit) is stable for all $\lev$, and satisfies the discrete maximum principle.
\end{ass}

\begin{ass}[Test functions]\label{reg_test_func} For $\psi,\varphi$ introduced in \eqref{DiscrFluctuations}, we require that 
\begin{align}\label{PolGrowth1stDerPsi}
\sup_{z\in\mathbb{R}^K}|\nabla\psi(z)|/(1+|z|^r) \leq C, \qquad\mbox{for some }r>0,
\end{align}
and that $\varphi \in C^{R+4}$ for some $R > 4 + d/2$ in the case of Fourier coupling (Subsection \ref{secFourierCoupling}), or $\varphi \in C^{3}$ in the case of Right-Most Nearest Neighbours coupling (Subsection \ref{secRM}). Furthermore, we use the notation $\phi^{t}$ (respectively, $\phi^t_{\lev}$) to indicate the solution to the continuous (respectively, discrete) backwards heat equation ending at $\phi^T = \varphi$ (respectively, at $\phi_\ell^T = \mathcal{I}_\lev\varphi$).
\end{ass}

\begin{ass}[Initial datum]\label{init_datum}
We set the initial condition of the discrete mean field limit $\rhobr^0_{\lev} := \mathcal{I}_{h_\lev} \overline{\rho}^0$.
We require the following bound for large fluctuations
\begin{align}\label{InftyRhobr0}
\mean{\|\rhor^0_{\lev} - \rhobr^0_{\lev}\|^j_{\infty}} \lesssim j^j \left(N^{-1/2}h_{\lev}^{-d/2}\right)^j,\qquad j\in\mathbb{N},
\end{align}
see for instance \cite[Theorem 3.4--3.5]{chung2006concentration} and \cite{mcdiarmid1998concentration}, 
as well as the fluctuation bound
\begin{align}\label{init_h_accuracy}
\mean{\left| N^{1/2}\left(\rho^{0}_{\lev} - {\ov{\rho}_\lev^{0} },\phi_{\lev}^{0}\right)_{\lev} - N^{1/2}\int(\empmeas[0] - {\overline{\rho}^{0} })(x)\phi^{0}(x)\m x \right|^4}^{1/4} \lesssim C(\varphi) h_{\lev}^2,
\end{align}
see also \cite[Bound (3.10)]{CornalbaFischerIngmannsRaithel}. 
\end{ass}

\begin{remark}\label{rho_min_max_realistic}
The existence of $\rho_{\min}$ and $\rho_{\max}$ in Assumption \ref{reg_mfl} is guaranteed by the maximum principle for the continuous heat equation, provided we start from a strictly positive initial datum $\overline{\rho}_0$. The same $\rho_{\min}$ and $\rho_{\max}$ then also bound the discrete Mean Field Limit $\rho_{h,\tau}$ thanks to Assumption \ref{ass_stability}.
\end{remark}

\begin{remark}
%The scaling \eqref{ScalingNandH} \todo{update remark}imposes that the average particle density per grid cell (i.e., $Nh_{\ell}^d$) exceed a given threshold. The role of the terms $h_{\ell}^{-\alpha}$ and $(1\vee \cfl^{\ths}_{\ell})$ will become apparent in due course. 
Assumption \ref{init_datum} is related to a fairly standard concentration inequality, see also similar discussions in \cite{cornalba2021dean,CornalbaFischerIngmannsRaithel}.
\end{remark}

\begin{remark}
If we considered the case $V\neq 0$ (which is not treated in this paper), then we would need to require strict positivity of the continouos mean-field limit (Assumption \ref{reg_mfl}) and guarantee strict positivity of the discrete mean-field limit (Assumption \ref{ass_stability}).
\end{remark}

\section{Cross-level variance bound, and variance reduction result}\label{sec_Main_Result}

The main result of this paper (MLMC complexity, Theorem \ref{main_res}) and the variance reduction results (Proposition \ref{PropVarFactor}) rely heavily on the following Proposition.

\begin{proposition}[Bounding $Var\left\{P_\ell - P_{\ell - 1}\right\}$]\label{PropBoundVarLevels}
Assume the validity of Assumptions \ref{N_h_scaling}, \ref{ass_stability}, \ref{reg_mfl}, \ref{reg_test_func}, \ref{init_datum}, and set $\cfl := \Dt_\lev/h^2_\lev$.
We have the estimate
\begin{align}\label{BoundVarConsecLevels}
&Var[P_\ell - P_{\ell - 1}]\\
& \quad \lesssim_\varphi \min\!\left\{\exp\left(-\frac{C\rho_{\min}}{\rho_{\max}^{1/2}} \!\Big[\frac{Nh_{\lev}^{d}}{\cfl^\ths \vee 1}\Big]^{\frac{1}{2}}\right)\!\Big[N^{-1}\hl^{-d}(\cfl^{\ths} \vee 1)\Big]^{\frac{1}{2}} + \rho_{\min}^{-1} \Big[N^{-1}\hl^{-d}(\cfl^{\ths} \vee 1)\Big]; \right.\nonumber\\
& \qquad \qquad \quad \left.\Big[N^{-1}\hl^{-d}(\cfl^{\ths} \vee 1)\Big]^{\frac{1}{2}}\right\} + Err_{num} := Err_{mod} + Err_{num}, \nonumber
\end{align}
where
\begin{align*}
Err_{num} \lesssim
\left\{
\begin{array}{rl}\!
\min[\rho_{\min}^{-1}(\hl^4 + \Dt^2_{\lev}); \hl^2 + \Dt_{\lev}] + C(\rho_{\min})[ \hl^4 + \Dt_{\lev}^2 ], & \mbox{for Fourier coupling,}\\
\min\{\rho_{\min}^{-1}(\hl^2 + \Dt_{\lev}); \hl + \Dt_{\lev}\}, & \mbox{for R-M coupling.}
\end{array}
\right.
\end{align*}
\end{proposition}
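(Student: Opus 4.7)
The plan is to exploit the polynomial growth assumption on $\nabla\psi$ in Assumption \ref{reg_test_func} to reduce the problem to controlling second moments of the underlying linear functionals. Setting $X_\ell := N^{1/2}(\rho^T_{\hl,\Dtl} - \rhob[\hl,\Dtl]^T,\Ih[h_\ell]\varphi)_{h_\ell}$, I would first write
\begin{align*}
Var[P_\ell - P_{\ell-1}] \leq \mathbb{E}|P_\ell - P_{\ell-1}|^2 \lesssim \mathbb{E}\left[(1 + |X_\ell|^{r} + |X_{\ell-1}|^{r})^2 |X_\ell - X_{\ell-1}|^2\right],
\end{align*}
apply Cauchy--Schwarz, and absorb the polynomial factors using uniform-in-$\ell$ moment bounds on $X_\ell$ (obtained from Burkholder--Davis--Gundy applied to the stochastic integral representation of $X_\ell$, which are finite under the scaling \eqref{ScalingNandH} and Assumption \ref{init_datum}). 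This reduces the problem to estimating $\mathbb{E}|X_\ell - X_{\ell-1}|^{2q}$ for a suitably large $q$.

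Next, I would exploit the discrete adjoint identity. Using the backwards discrete heat profile $\phit[\ell]$ introduced in Assumption \ref{reg_test_func}, the discrete Itô formula applied to \eqref{FullyDiscreteDK} gives an expansion of the form
\begin{align*}
X_\ell = N^{1/2}(\rho^0_\ell - \rhob[\ell]^0,\phit[\ell]^0)_{h_\ell} + N^{1/2}\sum_{m}\sum_{y,r} \sqrt{[\rho_\ell^{(m-1)\Dt_\ell}]^+}(y)\,\phit[\ell]^{m\Dt_\ell}(y)\,\Delta\beta^{m-1}_{(y,r)} + R^{\text{num}}_\ell,
\end{align*}
where $R^{\text{num}}_\ell$ collects the $\theta$-scheme time-discretisation residues. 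The analogous expansion holds for $X_{\ell-1}$ using the \emph{coupled} noise of Section \ref{secFourierCoupling} or \ref{secRM}. Subtracting, the initial-data difference is controlled directly by \eqref{init_h_accuracy}, and the residues $R^{\text{num}}_\ell - R^{\text{num}}_{\ell-1}$ supply the $Err_{num}$ terms via standard consistency estimates for the discrete heat semigroup, with the better rates in the Fourier case coming from the regularity requirement $\vphi,\rhobr \in C^{R+3}$ for $R > 4 + d/2$.

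The core of the proof is the remaining noise difference. I would split its integrand as
\begin{align*}
&\sqrt{[\rho^t_\ell]^+}\,\phit[\ell]^t - \sqrt{[\rho^t_{\ell-1}]^+}\,\phit[\ell-1]^t \\
&\quad = \underbrace{\Bigl(\sqrt{[\rho^t_\ell]^+} - \sqrt{\rhob[\ell]^t}\Bigr)\phit[\ell]^t - \Bigl(\sqrt{[\rho^t_{\ell-1}]^+} - \sqrt{\rhob[\ell-1]^t}\Bigr)\phit[\ell-1]^t}_{\text{fluctuation part}} + \underbrace{\sqrt{\rhob[\ell]^t}\,\phit[\ell]^t - \sqrt{\rhob[\ell-1]^t}\,\phit[\ell-1]^t}_{\text{deterministic part}}.
\end{align*}
The deterministic part, injected into Itô's isometry over the coupled noise, produces terms like $N^{-1}\hl^{-d}(\cfl^{\ths}\vee 1)$: in the Fourier coupling it inherits a factor $\hl^{\beta/2}$ with $\beta=4$ from the smoothness-based consistency of $\sqrt{\rhob}\phi$ across shared Fourier modes, whereas in the NN coupling the local averaging only yields $\beta=2$. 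This is precisely where the $\beta$ in \eqref{informal_lower_bound_density} enters. For the fluctuation part I would split according to the event $A_\ell := \{\|\rho^t_\ell - \rhob[\ell]^t\|_\infty \leq \rho_{\min}/2\}$: on $A_\ell$ a Taylor expansion of $\sqrt{\cdot}$ around $\rhob$ provides an extra factor $\rho_{\min}^{-1/2}(\rho^t - \rhob^t)$, giving the $\rho_{\min}^{-1}[N^{-1}\hl^{-d}(\cfl^{\ths}\vee 1)]$ term; on the complement, the concentration inequality \eqref{InftyRhobr0} supplies the exponential factor $\exp(-C\rho_{\min}\rho_{\max}^{-1/2}(N\hl^d/(\cfl^{\ths}\vee 1))^{1/2})$ multiplying the crude $[N^{-1}\hl^{-d}(\cfl^{\ths}\vee 1)]^{1/2}$ bound coming from the $L^2$ energy estimate. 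The bare trivial alternative $[N^{-1}\hl^{-d}(\cfl^{\ths}\vee 1)]^{1/2}$ in the $\min$ comes from bounding each of $X_\ell$ and $X_{\ell-1}$ separately without any cancellation, which is sometimes sharper when $\rho_{\min}$ is small.

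The main obstacle is the coordination between the noise coupling (which determines how noise increments telescope/cancel across levels) and the discrete backward heat semigroup (which determines how test functions on different grids compare). The Fourier coupling diagonalises nicely with $\Delta_h$, enabling the $\hl^4$ rate; the NN coupling only gives local averaging and hence only $\hl^2$. Propagating these rates through the Itô isometry while simultaneously keeping track of the concentration dichotomy on the fluctuation term, and doing so uniformly in $\ell$ under the scaling \eqref{ScalingNandH}, is where the technical bulk of the argument lies.
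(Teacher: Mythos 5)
Your proposal follows essentially the same route as the paper's proof: Taylor-expand $\psi$ and use the polynomial growth of $\nabla\psi$ to reduce to a fourth-moment bound on the difference of the two linear functionals; represent each functional as a martingale via the backward discrete heat equation; apply It\^o's formula to the coupled difference; and split the resulting quadratic-variation integrand into a fluctuation part (cutoff dichotomy at $\rho_{\min}/2$ plus concentration) and a deterministic consistency part (yielding $Err_{num}$). However, two steps are genuine gaps rather than omitted routine details. First, the exponential factor on the complement of your good event $A_\ell$ cannot come from \eqref{InftyRhobr0}, which is a concentration bound for the \emph{initial} datum only. What is needed is concentration of $\sup_{t}\|\rho^{t}_{h,\tau}-\ov{\rho}^{t}_{h,\tau}\|_{\infty}$ uniformly over the time grid; this is the content of the paper's Lemma \ref{lma_ErrToMFL} (estimate \eqref{eqn_ErrToMFL}) and requires its own argument --- testing against discrete Dirac deltas propagated by the backward heat equation, a stopping-time truncation, moment bounds optimised over the moment order $j$, and a union bound over all grid points and times. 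Without this the dichotomy step does not close, and it is also this lemma that produces the $(\cfl^{\ths}\vee 1)$ dependence in $Err_{mod}$.

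Second, in the Fourier coupling your claim that the deterministic part ``inherits a factor $h_\ell^{\beta/2}$ with $\beta=4$ from smoothness'' skips the actual difficulty: the quadratic variation of the coupled difference is a \emph{sum over all frequencies} $\xi$ in the coarse index set of squared per-mode consistency errors, plus a sum over the unshared high-frequency modes. A naive per-mode $O(h^2)$ estimate does not sum to anything useful. The paper gains summability by discrete integration by parts $R$ times in each active direction (with $R>4+d/2$), producing the per-mode decay $|\xi|^{4-R}$ in \eqref{change_levels} and the decay $|\xi|^{-R}$ for the unshared modes in \eqref{BoundC2}; this is precisely where the regularity hypotheses $\rhobr\in C^{R+3}$ and $\varphi\in C^{R+4}$ are consumed, and it is the bulk of the technical work. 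Two smaller inaccuracies: you attribute the $N^{-1}h_\ell^{-d}(\cfl^{\ths}\vee 1)$ contribution to the deterministic part in one sentence and (correctly) to the fluctuation part in the next --- only the latter is right, since the deterministic part carries no $N$-dependence after the $N^{1/2}\cdot N^{-1/2}$ cancellation; and your martingale representation drops the discrete gradient acting on $\phi_\ell$, which is what makes the energy estimate of Lemma \ref{Discrete_Energy_Est} relevant.
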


\begin{remark}\label{error_naming}
The name $Err_{mod}$ above simply reflects the fact the such an error is -- morally -- of modelling type (as it is directly related to the average particle density). On the other hand, 
$Err_{num}$ is a purely numerical error.
\end{remark}

\begin{remark}
Bound \eqref{BoundVarConsecLevels} takes different forms, depending on the choice of coupling, on which of the arguments in the minima is smaller. As we do not want to unnecessarily overcomplicate the statement of our main Thorem \ref{main_res} (which stems directly from Proposition \ref{PropBoundVarLevels}), we limit ourselves to stating Proposition \ref{PropVarFactor} in two relevant subset of cases, one for each coupling.
\end{remark}

%\begin{theorem}[MLMC complexity]\label{main_res}
%\todo{fill in}
%\end{theorem}

\begin{proposition}[Variance reduction]\label{PropVarFactor}
Assume the validity of Assumptions \ref{reg_mfl}--\ref{ass_stability}--\ref{reg_test_func}--\ref{init_datum}.  
Let $M_\lev$, $\lev \in \{0,\dots,L\}$ be the number of samples of $P_\lev - P_{\lev - 1}$ (we understand $P_{-1}\equiv  0$), and let $M_{0}$ be be the number of samples of $P_{0}$. 
Choose
\begin{align}\label{NumberSamplesForLevels}
M_{\lev} := (h_{\ell}/h_{\min})^d \cdot (\Dt_{\ell}/\Dt_{\min}) 
= (h_{\ell}/h_{\min})^{d+2}  
\end{align}
and $L \propto |\log(h_{\min})|$. Assume $\overline{\rho}$ to be regular enough so that the order in the $h$-bound for $Err_{num}$ in \eqref{BoundVarConsecLevels} is the highest of the two available (i.e., 4 for Fourier coupling, 2 for Right-Most Nearest Neighbours coupling). 
Then the variance of the Multilevel Monte Carlo estimator 
\begin{align}\label{MLMCEstimator}
\EstML := M^{-1}_{0}\sum_{i=1}^{M_{0}}P_{0,(i)} + \sum_{\lev = 1}^{L}{M^{-1}_{\lev}\sum_{i=1}^{M_{\lev}}(P_{\lev,(i)} - P_{\lev - 1,(i)})}
\end{align} 
satisfies the bound
\begin{align}\label{OptimalVarBoundMLMC}
Var\left[ \EstML \right] \lesssim 
\left\{
\begin{array}{ll}
 (Nh^d_{\min})^{-1} + h_{\min}^{(d+2)\wedge 4} L^\star, & \mbox{for Fourier coupling, cfr \&\ref{secFourierCoupling}} \\
 (Nh_{\min})^{-1} + h_{\min}^{2}, & \mbox{for NN coupling, cfr \&\ref{secRM}}
\end{array}
\right.
\end{align}
where we have set
\begin{align}\label{L_star}
L^*:= 
\left\{
\begin{array}{ll}
1,  & \mbox{if }d\neq 2, \\
|\log(h_{\min})|, & \mbox{if }d= 2, \\
\end{array}
\right.
\end{align}
while carrying a total computational cost 
\begin{align}\label{CostMLMC}
Cost_{tot}(\EstML) \propto h_{\min}^{-(d+2)} \cdot|\log(h_{\min})|.
\end{align}
As a result of \eqref{OptimalVarBoundMLMC}--\eqref{CostMLMC}, using the MLMC estimator $\EstML$ defined in \eqref{MLMCEstimator} to approximate $\mean{\psi\left(N^{1/2}\int (\mu^T_N - \overline{\rho}^T)(x)\varphi(x)\m x  \right)}$ (see \eqref{particle_fluct}) -- as opposed to a standard MC estimator on the finest scale $(h_{\min}, \Dt_{\min})$ -- grants a variance reduction factor
\begin{align}\label{GainFactor}
F \propto 
\left\{
\begin{array}{ll}
\left[(Nh^d_{\min})^{-1} + h^{(d+2)\wedge 4}_{\min} L^*\right]^{-1}|\!\log(h_{\min})|^{-1},  & \mbox{for Fourier coupling, cfr \&\ref{secFourierCoupling}} \\
\left[(Nh^d_{\min})^{-1} + h^2_{\min}\right]^{-1}|\log(h_{\min})|^{-1}, & \mbox{for NN coupling, cfr \&\ref{secRM}} \\
\end{array}
\right.
\end{align}
\end{proposition}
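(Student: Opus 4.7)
The plan is to insert the cross-level bound from Proposition \ref{PropBoundVarLevels} into the standard MLMC variance decomposition and then evaluate the resulting geometric sums level by level. Since the noise used across different levels is independent by construction,
\[
Var[\EstML] = M_0^{-1} Var[P_0] + \sum_{\lev=1}^L M_\lev^{-1} Var[P_\lev - P_{\lev-1}].
\]
Under Assumption \ref{N_h_scaling} one has $N\hl^d \gtrsim \hl^{-\beta}|\log \hl|$, so the exponential prefactor in \eqref{BoundVarConsecLevels} decays super-polynomially and the modelling error reduces to $Err_{mod} \lesssim (N\hl^d)^{-1}$; the CFL relation $\Dtl = \cfl \hl^2$ together with the assumed regularity of $\overline{\rho}$ collapses $Err_{num}$ to $\hl^\beta$ (with $\beta = 4$ for Fourier and $\beta = 2$ for NN). Substituting $M_\lev^{-1} = (h_{\min}/\hl)^{d+2}$ and splitting gives
\[
Var[\EstML] \lesssim \frac{h_{\min}^{d+2}}{N}\sum_{\lev=0}^L \hl^{-(2d+2)} + h_{\min}^{d+2}\sum_{\lev=0}^L \hl^{\beta-d-2} =: S_1 + S_2.
\]

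For $S_1$, writing $\hl = h_{\min} K^{L-\lev}$ with $K\in\{2,3\}$ the grid refinement ratio, the summand is geometric with ratio $K^{2d+2}>1$ and is dominated by the finest-level term, giving $S_1 \lesssim (Nh_{\min}^d)^{-1}$. For $S_2$ I would separate three subcases according to the sign of $\beta-d-2$. If $\beta-d-2>0$ (Fourier with $d=1$), the summand is geometric in the \emph{opposite} direction and dominated by the coarsest term $h_0^{\beta-d-2}\sim 1$, yielding $S_2\lesssim h_{\min}^{d+2}$. If $\beta-d-2=0$ (Fourier with $d=2$), each term equals $h_{\min}^{d+2}$ and summing pulls out a factor $L+1\sim|\log h_{\min}|$. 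If $\beta-d-2<0$ (Fourier with $d\geq 3$, and every $d\geq 1$ for NN), the summand is dominated by the finest term $h_{\min}^{\beta-d-2}$, producing $S_2\lesssim h_{\min}^{\beta}$. Unifying the three subcases reproduces \eqref{OptimalVarBoundMLMC} with $L^*$ exactly as in \eqref{L_star}.

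The cost bound \eqref{CostMLMC} follows from observing that the per-sample cost at level $\lev$ is $\sim \hl^{-d}\Dtl^{-1} = \cfl^{-1}\hl^{-(d+2)}$, so the level-$\lev$ total cost equals $M_\lev \cdot \hl^{-(d+2)} = h_{\min}^{-(d+2)}$, independently of $\lev$, and summing across $L+1\propto |\log h_{\min}|$ levels yields the stated expression. For the variance reduction factor I would compare against a plain MC estimator on the finest grid using $M_{MC}$ i.i.d.\ samples, for which $Var[\mu_{MC}] = M_{MC}^{-1} Var[P]$; a short argument using the polynomial growth \eqref{PolGrowth1stDerPsi} of $\nabla\psi$, the $N^{1/2}$-scaling of the fluctuation, and the moment bounds implicit in \eqref{init_h_accuracy} (and its dynamical counterpart) shows $Var[P]=O(1)$ uniformly in $N$ and in the discretisation parameters. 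Matching the cost $M_{MC}\cdot h_{\min}^{-(d+2)}$ to $Cost_{tot}(\EstML)$ forces $M_{MC}\sim |\log h_{\min}|$, and then $F=Var[\mu_{MC}]/Var[\EstML]$ delivers \eqref{GainFactor}. The main obstacle is the case analysis of $S_2$, particularly the borderline case $\beta=d+2$ (Fourier, $d=2$) which is responsible for the $L^*$ factor and makes the prescribed choice $L\propto |\log h_{\min}|$ tight; a secondary technicality is the uniform $O(1)$ bound on $Var[P]$, which underlies the definition of $F$.
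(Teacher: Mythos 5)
Your proposal is correct and follows essentially the same route as the paper's proof: insert the cross-level bound of Proposition \ref{PropBoundVarLevels} into the standard MLMC variance decomposition with $M_\lev^{-1}=(h_{\min}/\hl)^{d+2}$, evaluate the resulting geometric sums (your three-case analysis of $\beta-d-2$ is exactly what the paper compresses into ``readily imply''), and obtain $F$ by comparing against a cost-matched single-level MC estimator with $Var[P]=O(1)$. The only differences are expository: you make explicit the borderline case $\beta=d+2$ responsible for $L^*$ and the uniform $O(1)$ bound on $Var[P]$, both of which the paper leaves implicit.
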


\begin{remark}\label{remark_choice_samples}
Although the choice of samples in \eqref{NumberSamplesForLevels} amounts to a slightly less efficient MLMC estimator than what we would otherwise have if we followed standard cost optimisation  (see Algorithm \ref{algorithm} below), it however enables us to obtain the variance reduction factor \eqref{GainFactor} with a short and intuitive proof. The standard cost optimisation procedure is instead followed when producing the computational results associated with Theorem \ref{main_res} (see \cite{GilesReview} and Algorithm \ref{algorithm} below). 
\end{remark}

\section{Proofs of Proposition \ref{PropBoundVarLevels}, Theorem \ref{main_res}, and Proposition \ref{PropVarFactor}}\label{sec_proofs}

The following Lemma -- whose proof is deferred to Appendix \ref{proof_lma_ErrToMFL} -- is needed for proving Proposition \ref{PropBoundVarLevels}.

\begin{lemma}\label{lma_ErrToMFL}
 Under Assumptions \ref{N_h_scaling} and \ref{init_datum} we have
\begin{align}\label{eqn_ErrToMFL}
&\mathbb{P}\left[\sup_{t\in \Tgrid[\Dt]} \|\rho^{t}_{h,\tau}-\ov{\rho}_{h,\tau}^t\|_{\infty} \geq \frac{B\rho_{\min}}{4} \right] \lesssim \exp\left(-\frac{C\rho_{\min}}{\rho_{\max}^{1/2}}\left[\frac{Nh^{d}}{\cfl^\ths \vee 1}\right]^{1/2}\!\!\!\frac{B}{\sqrt{B+1}}\right)
\end{align}
for any $B\geq 0$, and where $\cfl = \tau/h^2$. Furthermore, we have the estimates
\begin{align}
\mean{\sup_{t\in \Tgrid[\Dt]} \|\rho^{t}_{h,\tau}-\ov{\rho}_{h,\tau}^t\|^j_{\infty}}^{1/j}\label{MomentBound_infty} 
& \lesssim C(j) \rho_{\max}^{1/2}(N^{-1}h^{-d})^{1/2}(\cfl^\ths \vee 1)^{1/2},\\
\mean{\left|\left(\rho^{t}_{h,\tau}-\ov{\rho}_{h,\tau}^t,\mathcal{I}_h\varphi\right)_\lev\right|^j}^{1/j} & \lesssim C(j)\rho_{\max}^{1/2}N^{-1/2}\|\varphi\|_{C^1}.\label{MomentBound}
\end{align}
\end{lemma}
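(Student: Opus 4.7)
My plan is to represent the fluctuation $v^{m\tau} := \rho^{m\tau}_{h,\tau} - \ov{\rho}^{m\tau}_{h,\tau}$ through a discrete Duhamel formula and then control it by a self-consistent bootstrap that tames the multiplicative nature of the noise $\sqrt{[\rho]^+}$. Since the deterministic dynamics in \eqref{FullyDiscreteDK} are linear, subtracting the noise-less equation yields
\begin{align*}
v^{m\tau} = S^{m\tau}_h v^0 + N^{-1/2}\sum_{k=0}^{m-1} S^{(m-k)\tau}_h \nabla_h\cdot\!\bigl(\sqrt{[\rho^{k\tau}_{h,\tau}]^+}\,\Delta W^{k}\bigr),
\end{align*}
where $S^{m\tau}_h$ denotes the discrete semigroup of the $b_j$-weighted scheme. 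By Assumption \ref{ass_stability}, $S^{m\tau}_h$ is an $L^\infty$-contraction, and standard discrete heat-kernel estimates on $\domain$ give $\sum_k \tau \|\nabla_h S^{k\tau}_h(x,\cdot)\|_h^2 \lesssim h^{-d}(\cfl^{\ths}\vee 1)$.

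For \eqref{eqn_ErrToMFL} I introduce the stopping time $\sigma_B := \inf\{t\in\Tgrid[\tau]: \|\rho^t_{h,\tau}-\ov{\rho}^t_{h,\tau}\|_\infty > B\rho_{\min}/4\}$, so that on $\{t\leq\sigma_B\}$ the noise coefficient is bounded pointwise by $M_B := \rho_{\max}+B\rho_{\min}/4$. Conditional on the path $(\rho^{k\tau})_k$, the stopped process $v^{t\wedge\sigma_B}(x)$ is mean-zero Gaussian with variance $\lesssim \tau N^{-1}\sum_{k,y,r}|(\nabla_h S^{(m-k)\tau}_h e^d_{h,y,r})(x)|^2 M_B \lesssim M_B N^{-1}h^{-d}(\cfl^\ths\vee 1)$. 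Combining a sub-Gaussian tail bound with a union bound over the $\mathcal{O}(h^{-d}\tau^{-1})$ space-time grid points (absorbed via the $|\log h|$ factor in Assumption \ref{N_h_scaling}) yields
\begin{align*}
\Pm[\sigma_B\leq T] \lesssim \exp\!\left(-\frac{c(B\rho_{\min})^2 Nh^d}{(\rho_{\max}+B\rho_{\min})(\cfl^{\ths}\vee 1)}\right),
\end{align*}
which after simplifying $B^2/(\rho_{\max}+B\rho_{\min})\sim \rho_{\max}^{-1}B^2/(B+1)$ gives exactly the $B/\sqrt{B+1}$ exponent of \eqref{eqn_ErrToMFL}; the $S^{m\tau}_h v^0$ contribution is already controlled by \eqref{InftyRhobr0}. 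The moment bound \eqref{MomentBound_infty} then follows from \eqref{eqn_ErrToMFL} by integrating the tail via $\Ev[X^j]=j\int_0^\infty B^{j-1}\Pm(X>B)\,dB$, with the $j^j$ factor coming from the extreme-tail regime where the exponent is only linear in $\sqrt{B}$.

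For the sharper inner-product bound \eqref{MomentBound}, I integrate the discrete divergence by parts against $\Ih\varphi$:
\begin{align*}
(v^{m\tau},\Ih\varphi)_h = (S^{m\tau}_h v^0,\Ih\varphi)_h - N^{-1/2}\sum_{k=0}^{m-1}\!\bigl(\sqrt{[\rho^{k\tau}]^+}\Delta W^{k},\nabla_h S^{(m-k)\tau}_h \Ih\varphi\bigr)_h.
\end{align*}
Each summand has conditional variance $\lesssim \tau N^{-1}\rho_{\max}\|\nabla_h S^{(m-k)\tau}_h \Ih\varphi\|_h^{2} \lesssim \tau N^{-1}\rho_{\max}\|\varphi\|_{C^1}^2$, using $L^\infty$-contractivity of $S_h$ applied to $\nabla_h\Ih\varphi$ (with $\|\nabla_h\Ih\varphi\|_\infty\lesssim \|\varphi\|_{C^1}$). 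Summing over the $\mathcal{O}(\tau^{-1})$ time steps gives total variance $\lesssim \rho_{\max}N^{-1}\|\varphi\|_{C^1}^2$, crucially independent of $h$; Gaussian hypercontractivity conditional on the path -- combined with the same bootstrap used above to control $\|\rho\|_\infty$ -- then delivers \eqref{MomentBound}.

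The main obstacle is closing the bootstrap without degrading the rate: the Gaussian concentration bound itself depends on $\|\rho\|_\infty$ through the noise coefficient, and only the self-consistent choice $M_B = \rho_{\max}+B\rho_{\min}/4$ produces the characteristic $B/\sqrt{B+1}$ exponent (quadratic in $B$ for typical fluctuations, linear in $B$ for extreme ones). Assumption \ref{N_h_scaling} is precisely tuned so that the $|\log h|$ cost of the space-time union bound is absorbed without spoiling the final constants.
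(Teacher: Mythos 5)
Your overall architecture (Duhamel representation, stopping time to cap the noise coefficient $\sqrt{[\rho]^+}$, pointwise concentration, union bound over the space--time grid, tail integration for the moments) matches the paper's, which works with the adjoint formulation: it tests $\rho^t_{h,\tau}-\ov{\rho}^t_{h,\tau}$ against the discrete backwards heat flow started from a discrete Dirac mass, i.e.\ against exactly your kernel $S^{(m-k)\tau}_h$. The genuine gap is in your concentration step. The assertion that ``conditional on the path $(\rho^{k\tau})_k$, the stopped process $v^{t\wedge\sigma_B}(x)$ is mean-zero Gaussian'' is circular: the path is a measurable function of the very noise increments you then want to treat as independent Gaussians, so you cannot condition on the whole path and retain Gaussianity. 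What is true is that each increment is Gaussian \emph{conditionally on the past}, with an $\mathcal{F}_{k\tau}$-measurable variance; turning that into a tail bound requires either an exponential-supermartingale (Freedman/Azuma-type) argument with the predictable bracket capped on $\{t\le\sigma_B\}$, or, as the paper does, a moment recursion: apply It\^o to $(D^{t\wedge\Ts})^j$, bound the bracket by $(B+1)\rho_{\max}(\cfl^\ths\vee1)N^{-1}h^{-d}$ via the discrete energy estimate, absorb with H\"older--Young to get $\Ev\big[\sup_s (D^{s})^j\big]\lesssim \big(Cj^2(B+1)\rho_{\max}(\cfl^\ths\vee1)/(Nh^d)\big)^{j/2}$, and then optimize Chebyshev over $j$; the $j^j$ produced by that recursion is precisely what yields the $\exp\!\big(-c(Nh^d)^{1/2}B/\sqrt{B+1}\big)$ form of the statement.

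Relatedly, your claimed tail $\exp\!\big(-c(B\rho_{\min})^2Nh^d/((\rho_{\max}+B\rho_{\min})(\cfl^\ths\vee1))\big)$ does \emph{not} ``simplify to'' the exponent in \eqref{eqn_ErrToMFL}: it has $Nh^d$ to the first power versus $(Nh^d)^{1/2}$, and $B^2/(B+1)$ versus $B/\sqrt{B+1}$, which are different functions (for large $B$: $B$ versus $\sqrt{B}$). A correctly executed sub-Gaussian argument would indeed give your stronger bound, which implies the lemma's in the non-trivial regime $B\gtrsim(Nh^d)^{-1/2}$, so the mismatch is not fatal to the conclusion---but the claim of exact agreement is wrong and indicates the concentration step was never actually carried out. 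Two smaller points: the union bound must also run over the time index $m$, since your Duhamel sum is a martingale in $k$ only for fixed $m$ (the kernel $S^{(m-k)\tau}_h$ depends on $m$); and in \eqref{MomentBound} the estimate $([\rho^{k\tau}]^+,|\nabla_h S^{(m-k)\tau}_h\Ih\varphi|^2)_h\lesssim\rho_{\max}\|\varphi\|_{C^1}^2$ needs either the stopping-time event or mass conservation, not merely $L^\infty$-contractivity of $S_h$.
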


\subsection{Proof of Proposition \ref{PropBoundVarLevels} with Fourier coupling}

For notational convenience, we occasionally drop the time dependence, and the vectorial notation over the $d$ components of the noise (i.e., we write $\Four{\xi}{x}$ instead of $\{\Four{\xi}{x}f_r\}_{r=1}^{d}$).

\emph{Step 1: rewriting $Var[P_\ell - P_{\ell - 1}]$}. A first-order Taylor expansion on $\psi$ gives 
\begin{align}\label{BoundVar}
Var[P_\ell - P_{\ell - 1}] & \leq \mean{\left| P_\ell - P_{\ell - 1} \right|^2} \nonumber\\
&  = \mean{\left| \psi\left(N^{1/2}\left(\rho^T_{\lev} - {\rhob[\lev]^T},\varphi_{\lev}\right)_{\lev}\right) - \psi\big(N^{1/2}\left(\rho^T_{\lev-1} - {\rhob[\lev-1]^T},\varphi_{\lev-1}\right)_{\lev-1}\big) \right|^2} \nonumber\\
&  \leq \mean{|\nabla\psi(z)|^4}^{\frac{1}{2}}\mean{\left| N^{\frac{1}{2}}\!\!\left[\left(\rho^T_{\lev} - {\rhob[\lev]^T},\varphi_{\lev}\right)_{\lev} - \left(\rho^T_{\lev-1} - {\rhob[\lev-1]^T},\varphi_{\lev-1}\right)_{\lev-1}\right] \!\right|^4}^{\frac{1}{2}}
\end{align}
for some random $z$ such that 
\begin{align}\label{BoundZ}
|z| \leq \max\left\{\left| N^{1/2}\left(\rho^T_{\lev} - {\rhob[\lev]^T},\varphi_{\lev}\right)_{\lev}\right|; \big|N^{1/2}\left(\rho^T_{\lev-1} - {\rhob[\lev-1]^T},\varphi_{\lev-1}\right)_{\lev-1}\big|\right\}.
\end{align} 

\emph{Step 2: bounding $\mean{|\nabla\psi(z)|^4}$ in \eqref{BoundVar}}. Using \eqref{PolGrowth1stDerPsi}, \eqref{BoundZ} and \eqref{MomentBound}, we get
\begin{align}\label{Bound4thPowerGradPsi}
\mean{|\nabla\psi(z)|^4} \leq C(\rho_{\max},r)\|\varphi\|^{4r}_{C^{1}}.
\end{align}

\emph{Step 3: It\^o formula for second term in right-hand-side of \eqref{BoundVar}}. Let the test functions $\phi_{\lev}$ (respectively, $\phi_{\lev -1}$) satisfy the backwards evolution \eqref{BackwardsTestOneStep} ending in $\varphi_\lev$ (respectively, in $\varphi_{\lev -1}$). Thanks to the discrete martingale property in Lemma \ref{lem_Discrete_Martingale} and a simple interpolation argument on the noise, we can define a continuous-time martingale $D^t_{\lev,\lev-1}$ such that, crucially, 
\begin{align*}
D^t_{\lev,\lev-1} = N^{1/2}\left(\rho^{t}_{\lev} - {\ov{\rho}_\lev^{t} },\phi_{\lev}^{t}\right)_{\lev} - N^{1/2}\left(\rho^{t}_{\lev-1} - {\ov{\rho}_{\lev-1}^{t} },\phi_{\lev-1}^{t}\right)_{\lev-1}\qquad \mbox{for all }t\in \mathcal{S}_{\lev-1}.
\end{align*}
Now abbreviate $\btt{\lev} := \max\{m \in \Tgrid[\Dt_\lev]: m < t \}$.
Using the continuous It\^o Lemma, the noise coupling as stated in Subsection \ref{secFourierCoupling}, and the error bound \eqref{Matrix_error}, we deduce
\begin{align}
& \mean{(D^z_{\lev,\lev-1})^4} \nonumber\\
& \quad \lesssim \mean{(D^0_{\lev,\lev-1})^4}\nonumber\\
& \quad \quad + \int_{0}^{z}{\mathbb{E}\Bigg[(D^t_{\lev,\lev-1})^2 \times \Dt^2_{\lev} + (D^t_{\lev,\lev-1})^2 \, \times }\nonumber\\
& \left. \quad \quad \quad \quad \times \Bigg\{\sum_{\xi \in \IndSet{\lev-1}}{\left|\bigg(\sqrt{\rho^{\btt{\lev},+}_{\lev}}\F{\xi},\nabla_\lev\phi^{\btt{\lev}}_\lev\bigg)_{\!\lev} - \left(\sqrt{\rho^{\btt{\lev-1},+}_{\lev-1}}\F{\xi},\nabla_{\lev-1}\phi^{\btt{\lev-1}}_{\lev-1}\right)_{\!\lev-1}\right|^2}\right. \nonumber\\
& \quad \quad \quad \quad \quad \quad \quad \quad +  \sum_{\xi \in \IndSet{\lev}\setminus\IndSet{\lev-1}}{\left|\left(\sqrt{\rho^{\btt{\lev},+}_{\lev}}\F{\xi},\nabla_\lev\phi^{\btt{\lev}}_\lev\right)_{\lev}\right|^2} \Bigg\} \Bigg] \m t \nonumber\\
& \quad =: \mean{(D^0_{\lev,\lev-1})^4} + \Dt_{\lev}^4 + \int_{0}^{z}{\mean{(D^t_{\lev,\lev-1})^2 \times \left\{ A_{\IndSet{\lev-1}} + A_{\IndSet{\lev}\setminus\IndSet{\lev-1}}\right\}}\m t},\label{ItoForFourthMoment}
\end{align}
where $(\cdot)$ in $\F{\xi}$ is reserved for the spatial variable $x$.
The inequality \eqref{ItoForFourthMoment} implies
\begin{align}
& \mean{(D^z_{\lev,\lev-1})^4} \nonumber\\
& \quad \lesssim\mean{(D^0_{\lev,\lev-1})^4}  +  \Dt_{\lev}^4 + \int_{0}^{z}{\mean{(D^t_{\lev,\lev-1})^4}}\m t + \int_{0}^{z}{\mean{ A^2_{\IndSet{\lev-1}} + A^2_{\IndSet{\lev}\setminus\IndSet{\lev-1}}} \m t}. \label{Expand4thMomentDiff}
\end{align}

\emph{Step 4: bounding $A^2_{\IndSet{\lev-1}} $ in \eqref{Expand4thMomentDiff}}. By adding and subtracting zero we get
\begin{align}\label{ExpandInBs}
A_{\IndSet{\lev-1}} & \lesssim \sum_{\xi \in \IndSet{\lev-1}}{\left|\left(\sqrt{\rho^{\btt{\lev},+}_{\lev}}\F{\xi},\nabla_\lev\phi^{\btt{\lev}}_\lev\right)_{\lev} - \left(\sqrt{\ov{\rho}^{\btt{\lev}}}\F{\xi},\nabla\phi^{\btt{\lev}}\right)_{\lev}\right|^2} \nonumber\\
& \quad + \sum_{\xi \in \IndSet{\lev}}{\left| \left(\sqrt{\ov{\rho}^{\btt{\lev}}}\F{\xi},\nabla\phi^{\btt{\lev}}\right)_{\lev} - \left(\sqrt{\ov{\rho}^{\btt{\lev-1}}}\F{\xi},\nabla\phi^{\btt{\lev-1}}\right)_{\lev} \right|^2} \nonumber\\
& \quad + \sum_{\xi \in \IndSet{\lev-1}}{\left| \left(\sqrt{\ov{\rho}^{\btt{\lev-1}}}\F{\xi},\nabla\phi^{\btt{\lev-1}}\right)_{\lev} - \left(\sqrt{\ov{\rho}^{\btt{\lev-1}}}\F{\xi},\nabla\phi^{\btt{\lev-1}}\right)_{\lev-1}\right|^2} \nonumber\\
& \quad + \sum_{\xi \in \IndSet{\lev-1}}{\left| \left(\sqrt{\rho^{\btt{\lev-1},+}_{\lev-1}}\F{\xi},\nabla_{\lev-1}\phi^{\btt{\lev-1}}_{\lev-1}\right)_{\lev-1} - \left(\sqrt{\ov{\rho}^{\btt{\lev-1}}}\F{\xi},\nabla\phi^{\btt{\lev-1}}\right)_{\lev-1}\right|^2} \nonumber\\
& =: R_{\lev} + R_{\btt{\lev},\btt{\lev -1}} + R_{\lev,\lev -1} + R_{\lev-1}.
\end{align}
Adding and subtracting zero (and dropping the dependence on $\btt{\lev}$), we obtain 
\begin{align}\label{expandRell}
R_{\lev} & \lesssim \sum_{\xi \in \IndSet{\lev-1}}{\left|\Big(\Big[\sqrt{\rho^+_{\lev}} - \sqrt{\rhob}\Big]\F{\xi},\nabla_\lev\phi_\lev\Big)_{\lev} - \left(\sqrt{\rhob}\F{\xi},[\nabla\phi - \nabla_\lev\phi_\lev]\right)_{\lev}\right|^2} \\
& \lesssim \Big\| \Big[ \sqrt{\rho^+_{\lev}} - \sqrt{\rhob} \Big] \nabla_\lev\phi_\lev\Big\|^2_{\lev} + \left\| \sqrt{\rhob}[\nabla\phi - \nabla_\lev\phi_\lev] \right\|^2_{\lev} \nonumber\\
& \lesssim \mathbf{1}_{\inf \rho_{\lev}\leq {K}}\Big\| \Big[ \sqrt{\rho^+_{\lev}} - \sqrt{\rhob} \Big] \nabla_\lev\phi_\lev\Big\|^2_{\lev} + \mathbf{1}_{\inf \rho_{\lev}> {K}}\cdot\rho_{\min}^{-1}\left\| \left[ \rho_{\lev} - \rhob \right] \nabla_\lev\phi_\lev\right\|^2_{\lev} \nonumber\\
& \quad + \left\| \sqrt{\rhob}[\nabla\phi - \nabla_\lev\phi_\lev] \right\|^2_{\lev} =: R_{\lev,1} + R_{\lev,2} + R_{\lev,3}\nonumber,
\end{align}
with $K$ to be specified later.
An analogous bound holds for $R_{\lev-1}$.

In order to treat $R_{\lev,\lev-1}$ (we drop the time dependence $\btt{\lev-1}$ for this term), we abbreviate $g := \sqrt{\rhobr} \nabla \phi$ and perform the rewriting
\begin{align}\label{DiffBetweenLevels}
& \left(\sqrt{\rhob}\F{\xi},\nabla\phi\right)_{\lev} - \left(\sqrt{\rhob}\F{\xi},\nabla\phi\right)_{\lev - 1} \nonumber\\
& \quad = \sum_{x\in \Grid{\lev-1}{d}}{h_{\lev - 1}^d \Bigg( \Four{\xi}{x}g(x) - \sum_{\Grid{\lev}{d} \ni y\sim x}{3^{-d}\Four{\xi}{y}(y)g(y)}\Bigg)},
\end{align}
where $y\sim x$ indicates all points $y \in \Grid{\lev}{d}$ such that $|y-x|\leq h_{\lev}$. In order to sum up over the frequencies $\xi$, we integrate by parts on the lattice $\Grid{\lev - 1}{d}$. More specifically, we rely on the increment relation
\begin{align}\label{IncrementFourier}
e^{i\xi\cdot (x + he_j)} - e^{i\xi\cdot x} = e^{i\xi\cdot x}\underbrace{(e^{i\xi_j h}-1)}_{=: P(\xi_j h)}, \qquad j\in\{1,\dots,d\}.
\end{align} 
Take $\xi\neq 0$. Using \eqref{IncrementFourier} with the same increment $h=h_{\lev-1}$ in all sums of \eqref{DiffBetweenLevels}, we perform discrete integration by parts $R$ times in direction $j$ such that $\xi_j \neq 0$, and get
\begin{align}\label{DiffBetweenLevels_2}
& \left(\sqrt{\rhob}\F{\xi},\nabla\phi\right)_{\lev} - \left(\sqrt{\rhob}\F{\xi},\nabla\phi\right)_{\lev - 1} \nonumber\\
& = \left[\frac{-1}{P(\xi_j h)}\right]^R \sum_{x\in \Grid{\lev-1}{d}}{h_{\lev - 1}^d \left( \Four{\xi}{x} \partial^{R}_{h,j}g(x) - \sum_{\Grid{\lev}{d} \ni y\sim x}{3^{-d}\Four{\xi}{y}(y)\partial^{R}_{h,j}g(y)}\right)},
\end{align}
where $\partial^{R}_{h,j}f(x) := \sum_{i=0}^{R}{(-1)^i}\binom{R}{i}f(x-ihe_j)$ is the standard backwards finite difference operator satisfying the relation
\begin{align}\label{Order1Der}
\partial^{R}_{h,j}f(x) = h^R\partial_{j}^Rf(x) + h^{R+1}C(R)\partial^{R+1}f(\zeta), \,\,\mbox{ for some }\zeta=\zeta(x)\in \domain.
\end{align} 

By using the order-two bound $| f(x+he_j) - 2f(x) + f(x-he_j)| \lesssim h^2 \|f\|_{C^2}$ in the round bracket of \eqref{DiffBetweenLevels_2} (with $f(x):= e^{i\xi\cdot x}\partial^{R}_{h,j}g(x)$), and the equivalence $\sin(z)\propto z$ for $z\in [-\pi/2,\pi/2]$, we rely on Assumptions \ref{reg_mfl}--\ref{reg_test_func} to integrate by parts $R$ times in all directions $j$ with $\xi_j \neq 0$ (with $R$ to be determined), and deduce
\begin{align}\label{change_levels}
& \left|\left(\sqrt{\rhob}\F{\xi},\nabla\phi\right)_{\lev} - \left(\sqrt{\rhob}\F{\xi},\nabla\phi\right)_{\lev - 1}\right| \nonumber\\
& \quad \leq \frac{1}{(\sum_{j=1}^{d}{|P(\xi_j h_{\lev-1})|)}^R}\sum_{y\in \Grid{\lev}{d}}{h_{\lev - 1}^d \|g\|_{C^{R+3}}}h_{\lev - 1}^{R}h_{\lev - 1}^{2}|\xi|^4 \nonumber\\
& \quad \leq |\xi|^{4-R}\|g\|_{C^{R+3}}h_{\lev - 1}^{2},
\end{align}
where we have also used \eqref{Order1Der} in the second-to-last inequality and the fact that  
$
|P(\xi_j h)| = |\sin(\xi_j h / 2)|, \forall j\in\{1,\dots,d\},
$
in the last inequality. The estimate for $\xi = 0$ is even simpler, and requires no integration by parts at all.
Provided $R>4+d/2$, \eqref{change_levels} implies that we can sum up the terms making up $R_{\lev,\lev-1}$ in \eqref{ExpandInBs} and get 
\begin{align}\label{BoundB2}
R_{\lev,\lev-1} \lesssim \|g\|^2_{C^{R+3}}h_{\lev - 1}^{4}.
\end{align}

\emph{Step 5: bounding $A^2_{\IndSet{\lev}\setminus\IndSet{\lev-1}}$ in \eqref{Expand4thMomentDiff}}. Dropping the dependence on $\btt{\lev}$, we write
\begin{align}\label{ExpandInBs_HighFreq}
A_{\IndSet{\lev}\setminus\IndSet{\lev-1}} & \lesssim \sum_{\xi \in \IndSet{\lev}\setminus\IndSet{\lev-1}}{\Big|\Big(\sqrt{\rho^+_{\lev}}\F{\xi},\nabla_\lev\phi_\lev\Big)_{\lev} - \left(\sqrt{\rhob}\F{\xi},\nabla\phi\right)_{\lev}\Big|^2} \nonumber\\
& \quad + \sum_{\xi \in \IndSet{\lev}\setminus\IndSet{\lev-1}}{\left|\left(\sqrt{\rhob}\F{\xi},\nabla\phi\right)_{\lev}\right|^2} =: \ov{R}_{\lev} + \ov{R}_{\IndSet{\lev}\setminus\IndSet{\lev-1}}.
\end{align}
The term $\ov{R}_{\lev}$ obviously has the same bound as the previously treated $R_{\lev}$.
Furthermore, as $R > 4 + d/2$, and reusing computations from \eqref{change_levels}, we get
\begin{align}\label{BoundC2}
\ov{R}_{\IndSet{\lev}\setminus\IndSet{\lev-1}} & \leq \sum_{\xi \in \IndSet{\lev}\setminus\IndSet{\lev-1}}{\frac{1}{|P(\xi h_{\lev-1})|^{2R}}\|g\|^2_{C^R}h^{2R}_{\lev - 1}} \nonumber\\
& \leq \sum_{\xi \in \IndSet{\lev}\setminus\IndSet{\lev-1}}{\frac{1}{|\xi|^{2R}}\|g\|^2_{C^R}} \leq \|g\|^2_{C^R}h_{\lev - 1}^4. 
\end{align}

\emph{Step 6: taking the expectations in \eqref{Expand4thMomentDiff}}. In order to bound $\mean{R_{\lev,1}^2 + R_{\lev,2}^2}$, we consider the best estimate originating from picking either $K=\rho_{\min}/2$ or $K=\infty$ in \eqref{expandRell}: More precisely, using
 H\"older's inequality, Lemma \ref{lma_ErrToMFL}, and the simple estimate $\|\overline{\rho} - \overline{\rho}_\lev\|_\lev\propto h_{\lev}^2 + \tau_{\lev}$, we obtain
\begin{align*}
& \mean{R_{\lev,1}^2 + R_{\lev,2}^2} \\
& \quad \lesssim_\varphi \min\!\left\{\exp\left(-\frac{C\rho_{\min}}{\rho_{\max}^{1/2}} \Big[\frac{Nh_\ell^{d}}{\cfl^\ths \vee 1}\Big]^{\frac{1}{2}}\right)\!\Big[N^{-1}\hl^{-d}(\cfl^{\ths} \vee 1)\Big] + \rho_{\min}^{-1} \Big[N^{-1}\hl^{-d}(\cfl^{\ths} \vee 1)\Big]^2\!; \right. \nonumber\\
& \qquad \qquad \qquad \Big[N^{-1}\hl^{-d}(\cfl^{\ths} \vee 1)\Big]\bigg\} + \min\{\rho_{\min}^{-2}(\hl^8 + \Dt^4_{\lev}); \hl^4 + \Dt_{\lev}^2 \}.
\end{align*}
As $\|\nabla \phi - \nabla_\lev\phi_\lev\| \propto h^{2}_\lev + \Dt_{\lev}$ (cfr. Lemma \ref{Lemma_ErrorDiscrTestFunc}), we get 
$
\mean{R_{\lev,3}^2} \leq C_{\varphi}(h_{\lev}^{8} + \Dt_{\lev}^4),
$
and an analogous bound holds for $R_{\lev-1}$. Similar arguments (involving the same thresholds $K=\rho_{\min}/2$ or $K=\infty$), a Taylor expansion, and the discrete Parseval identity grant the estimate $R_{\btt{\lev},\btt{\lev -1}} \lesssim_\varphi \min\{\rho^{-1}_{\min}\tau_\ell^2,\tau_\lev\}$.

\emph{Step 7: concluding the argument}.
Combining the estimates in \emph{Steps 4--6}, we get 
\begin{align*}
&\mean{ A^2_{\IndSet{\lev-1}} + A^2_{\IndSet{\lev}\setminus\IndSet{\lev-1}}}\\
& \quad \lesssim_\varphi \min\!\left\{\exp\left(-\frac{C\rho_{\min}}{\rho_{\max}^{1/2}} \Big[\frac{Nh_\ell^{d}}{\cfl^\ths \vee 1}\Big]^{\frac{1}{2}}\right) \Big[N^{-1}\hl^{-d}(\cfl^{\ths} \vee 1)\Big] + \rho_{\min}^{-1} \Big[N^{-1}\hl^{-d}(\cfl^{\ths} \vee 1)\Big]^2; \right.\nonumber\\
& \qquad \qquad \qquad \Big[N^{-1}\hl^{-d}(\cfl^{\ths} \vee 1)\Big]\bigg\} \\
&\qquad\quad+ \min\{\rho_{\min}^{-2}(\hl^8 + \Dt^4_{\lev}); \hl^4 + \Dt_{\lev}^2\} + C(\rho_{\min})\{ \hl^8 + \Dt_{\lev}^4 \}.
\end{align*}
Using the above inequality, \eqref{init_h_accuracy},  \eqref{BoundVar} and \eqref{Bound4thPowerGradPsi}, we apply Gronwall's Lemma in \eqref{Expand4thMomentDiff} and conclude the proof.

\begin{remark}
The proof of Proposition \ref{PropBoundVarLevels} with Right-Most Nearest Neighbours coupling is similar to the one we have provided for the Fourier coupling. The main adjustment concerns adapting the cross-variation structure (i.e., replacing the terms $A_{\IndSet{\lev-1}} + A_{\IndSet{\lev}\setminus\IndSet{\lev-1}}$ in \eqref{ItoForFourthMoment}). It is easy to see that that the term $A_{\IndSet{\lev-1}} + A_{\IndSet{\lev}\setminus\IndSet{\lev-1}}$ in \eqref{ItoForFourthMoment} is replaced by
\begin{align}\label{eqn:VarOfLvlPrelim}
& \sum_{x \in \Grid{\lev}{d}}{h^d_{\ell} \rho^+_{h_\ell}(x)|\nabla_{h_\ell}\phi_{h_\ell}(x)|^2} + \sum_{x \in \Grid{\lev-1}{d}}{h^d_{\ell-1} \rho^+_{h_{\ell-1}}(x)|\nabla_{h_{\ell-1}}\phi_{h_{\ell-1}}(x)|^2} \nonumber \\
& \quad \quad\quad -2\sum_{y \in B^{\rightarrow}_{\lev-1}(x)}\sum_{x\in \Grid{\lev-1}{d}}h_{\ell}^d\sqrt{\rho^+_{h_\ell}(y)} \nabla_{h_\ell}\phi_{h_\ell}(y)
 \cdot\sqrt{\rho^+_{h_{\ell-1}}(x)}\nabla_{h_{\ell-1}}\phi_{h_{\ell-1}}(x) \nonumber \\
& \quad = \sum_{y \in B^{\rightarrow}_{\lev-1}(x)}\sum_{x\in \Grid{\lev-1}{d}}{h^d_{\ell}\left| \sqrt{\rho^+_{h_\ell}(y)}\nabla_{h_\ell}\phi_{h_\ell}(y)  - \sqrt{\rho^+_{h_{\ell-1}}(x)}\nabla_{h_{\ell-1}}\phi_{h_{\ell-1}}(x)  \right|^2}.
\end{align}
From \eqref{eqn:VarOfLvlPrelim}, it is relatively straightforward to get the $Err_{num}$ contribution using similar techniques to those deployed in \eqref{expandRell} (i.e., choosing the best cutoff for the square-root between $K=\rho_{\min}/2$ and $K=\infty$). All other components of the estimate do not change with respect to the proof in the Fourier case.
\end{remark}
\begin{remark}\label{Quadratic_vs_Linear}
Having two separate estimates originating from two cut-off values $K=\rho_{\min}/2$ or $K=\infty$ helps providing a reliable estimate for $Err_{num}$ in \eqref{BoundVarConsecLevels}: However, these two estimates can -- in many cases -- still be substantially suboptimal, as they do not rely on the local structure of the mean field limit $\overline{\rho}$. A relevant example is discussed in our simulations in Section \ref{num_sim}.
\end{remark}

\subsection{Proof of Theorem \ref{main_res}}

Let $\kappa$ be the common ratio of the geometric sequence $(\hl)_{\ell}$.
Assumption \ref{N_h_scaling} implies that, for all levels needed to get to the minimum grid size $h^2_{\min} \lesssim \varepsilon$, we have the bound
\begin{align*}
Var[P_\ell - P_{\ell-1}] \lesssim \hl^{\beta} \propto \kappa^{-\beta \lev},
\qquad
\beta =  
\left\{
\begin{array}{ll}
4,  & \mbox{for Fourier coupling, cfr \&\ref{secFourierCoupling}} \\
2, & \mbox{for NN coupling, cfr \&\ref{secRM}} \\
\end{array}
\right.
\end{align*}
while a simple extension of the results in \cite{cornalba2021dean} give 
\begin{align}\label{geom_decay_mean}
|\mean{ P_\ell - Q }| \lesssim \hl^{2} \propto \kappa^{-\alpha \lev},\qquad \alpha =2.
\end{align}
Additionally, the computational cost for simulating a single sample of $P_{\ell}$ grows like
\begin{align}\label{cost_P_l}
Cost(P_\ell) \lesssim \hl^{-d-2} \propto \kappa^{(d+2) \lev}, \qquad \gamma = d+2.
\end{align}
Then \eqref{MLMC_complexity} follows from applying the general MLMC complexity theorem \cite[Theorem 2.1]{GilesReview}.

\subsection{Proof of Proposition \ref{PropVarFactor}}  

Since $\overline{\rho}$ is assumed to be regular, we have from Proposition \ref{PropBoundVarLevels} that $Var[P_{\ell} - P_{\ell-1}] \lesssim (Nh^d_\ell)^{-1} + h_{\ell}^\beta$, with $\beta = 4$ for the Fourier coupling and $\beta = 2$ for the NN coupling.
Using \eqref{NumberSamplesForLevels}, and denoting by $\kappa$ the common ratio of the geometric sequence $(\hl)_{\ell}$, we deduce
\begin{align}\label{VarBoundAlphaD}
Var\left[ \EstML \right] & \lesssim M^{-1}_{0} + \sum_{\lev = 1}^{L}{M_{\lev}^{-1}\left[ (N\hl^{d})^{-1} + \hl^\beta\right]} \nonumber\\
& \lesssim M^{-1}_{0} + \sum_{\lev = 1}^{L}{\left(\frac{h_{\min}}{\hl}\right)^{d+2}\left[ (N\hl^{d})^{-1} + \hl^\beta\right]} \nonumber\\
& \lesssim M^{-1}_{0} + (Nh^d_{\min})^{-1} + h^{d+2}_{\min}\sum_{\ell=1}^{L}{(h_{\min}\kappa^{L-\ell})^{\beta - d- 2}}
\end{align}

The choice $L \propto |\log(h_{\min})|$ and \eqref{VarBoundAlphaD} readily imply the variance bound \eqref{OptimalVarBoundMLMC}. 
Proving \eqref{CostMLMC} is immediate. 
Finally, the variance reduction factor \eqref{factor_var_red} can be readily deduced by dividing the MLMC variance \eqref{VarBoundAlphaD} with the variance of the standard MC estimator with the same cost as \eqref{CostMLMC}. Proposition \ref{PropVarFactor} is thus proved.

\section{Numerical simulations}\label{num_sim}

We provide numerical simulations demonstrating the validity of our theoretical results in the two dimensional case ($d=2$), and with the NN coupling.

\subsection{Types of experiments}\label{exp_type}

The experiments conducted are of two types:

\underline{\emph{Type 1}}: in this type of experiment, we provide computational results for Theorem \ref{main_res} by running the MLMC implementation as described in \cite[Algorithm 1]{GilesReview}. A minimal version of the pseudocode of this implementation is given in Algorithm \ref{algorithm}. 

In essence, to reach a given accuracy $\varepsilon$, Algorithm \ref{algorithm} recursively keeps on adding levels and computing the optimal number of samples
\begin{align}\label{opt_M_ell}
M_{\ell} = \left\lceil 2\varepsilon^{-2}\sqrt{V_\ell/C_\ell}\left(\sum_{\ell=0}^{L}{\sqrt{V_\ell C_{\ell}}}\right) \right\rceil
\end{align}
for all $\ell= 0,1,\dots,L$, where $L$ is the current maximum level, $V_{\ell} := Var[P_\ell - P_{\ell-1}]$, $\ell = 0,\dots, L$ (for notational simplicity, we always understand the notation $P_{-1}$ to mean $P_{-1}\equiv 0$) and $C_{\ell}$ is the cost of computing one sample on level $\ell$. The convergence criterion identifying the level $L$ at which the algorithm stops is given by the following robust check for the systematic error\footnote{the criterion \eqref{convergence_criterion} is the same as the one in \cite{GilesReview}, which is derived under the power law decay assumption for the systematic error on all $\ell\in\mathbb{N}$. As we do not have power law decay on all $\ell\in\mathbb{N}$, we heuristically expect good performance of this convergence criterion under the -- usual -- assumption of large average particle density.}:
\begin{align}\label{convergence_criterion}
\mbox{stop on level }L\mbox{ if }\max_{\tilde{\ell}\in\{0;1;2\}}\{2^{-\tilde{\ell}\alpha}|\mean{P_{L-\tilde{\ell}}-P_{L-\tilde{\ell}-1}}|\}/({2^{\alpha}-1})<\frac{\varepsilon}{\sqrt{2}}.
\end{align}
for $\alpha$ as in \eqref{geom_decay_mean}.
\begin{algorithm*}[h]
\caption{(see \cite[Algorithm 1]{GilesReview})}
\label{algorithm}
\begin{algorithmic}
\State Choose accuracy $\varepsilon > 0$ and $\overline{M}\in\mathbb{N}$. 
\State Start with $L = 2$, initial target of $\overline{M}$ samples on levels $\ell = 0, 1, 2$.
\While{extra samples need to be evaluated}
\State evaluate extra samples on each level
\State compute/update estimates for $V_\ell := Var[P_\ell - P_{\ell-1}]$, $\ell = 0,\dots, L$.
\State define optimal $M_\ell$, $\ell = 0,\dots, L$, according to \eqref{opt_M_ell}. 
\State test for weak convergence using \eqref{convergence_criterion}.
\If{not converged}
    \State set $L \gets L+1$
    \State initialize target $M_L$.
\EndIf

\EndWhile
\end{algorithmic}
\end{algorithm*}

\underline{\emph{Type 2}}: for this type of experiment, the number of samples $M_{\ell}$ is chosen to be proportional to a given, specified geometric progression. The variance of the resulting MLMC estimator is then evaluated against the variance of the corresponding standard MC estimator with the same computational time. A computational estimate of the variation reduction factor in Proposition \ref{PropVarFactor} is thus produced.

\subsection{Setting and specifics}\label{sec_setting}
The precise details of our setting are as follows:
\begin{itemize}
\item Space domain: we work on discretizations of $\mathbb{T}^2 = (0,2\pi)^2$.
\item Initial condition for mean-field dynamics: we use two different initial conditions. The first one
\begin{align}
\overline{\rho}_{0,reg}  = Z_{reg}^{-1}\left(1+e^{-(\sin^2(x-\pi/2) +\sin^2(y-3\pi/2))/2}/(\sqrt{2\pi})\right)
\end{align} 
-- where $Z_{reg}$ is the normalising constant -- is bounded away from zero, and with relatively low ratio $\sqrt{\rho_{\max}}/\rho_{\min}\approx 13.4$. The second initial condition 
\begin{align}
\overline{\rho}_{0,irreg}  = Z_{irreg}^{-1}e^{-(\sin^2(x-\pi/2) +\sin^2(y-3\pi/2))/(2\cdot 0.1)}%/(\sqrt{2\pi \cdot 0.1})
\end{align} 
has ultra-low density regions, and much larger ratio $\sqrt{\rho_{\max}}/\rho_{\min} \gg 10^6$. The two initial conditions are shown in Figure \ref{init_cond}.
\begin{figure}[h]
\begin{center}
\includegraphics[width=0.45\linewidth]{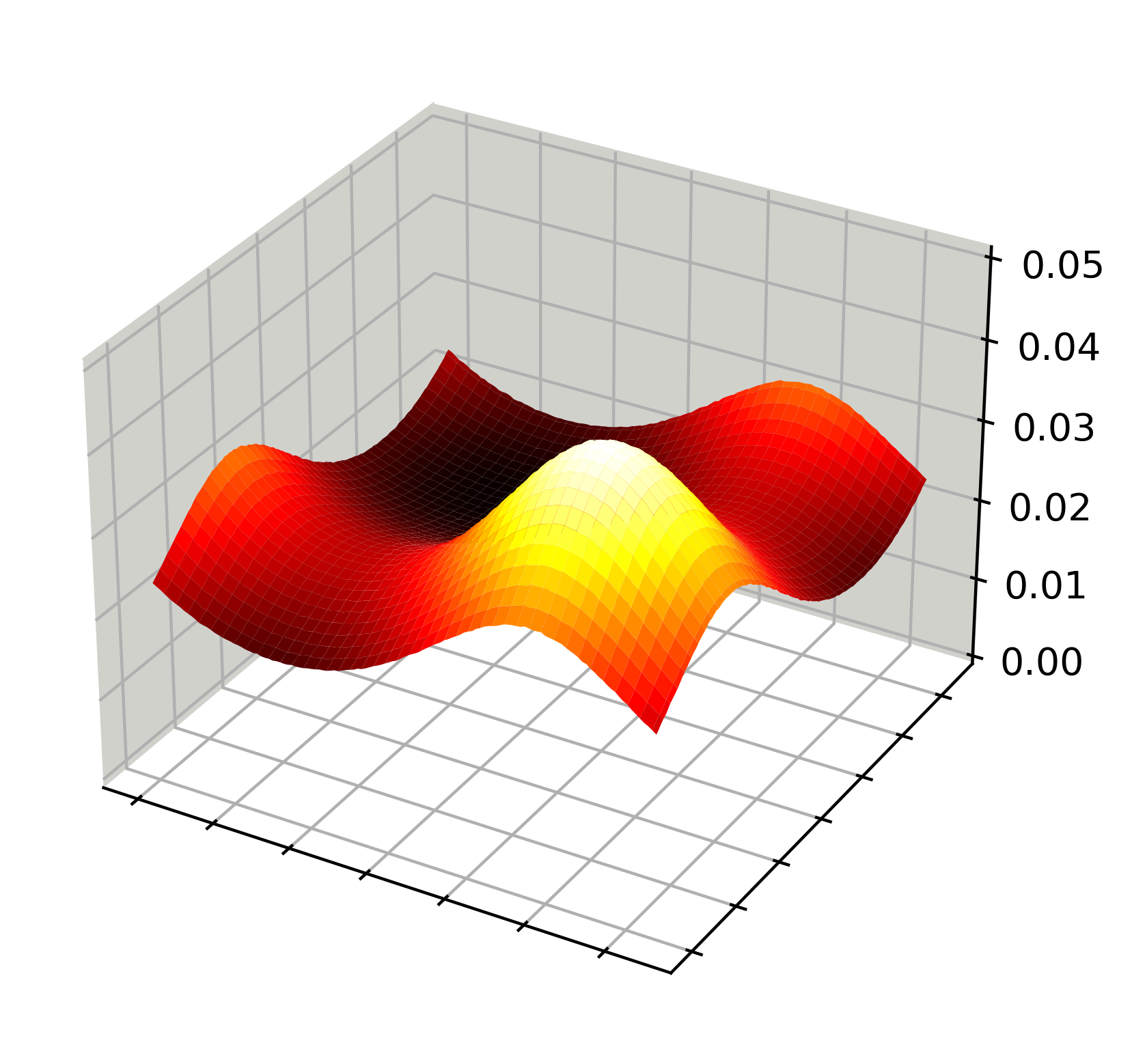}
\includegraphics[width=0.45\linewidth]{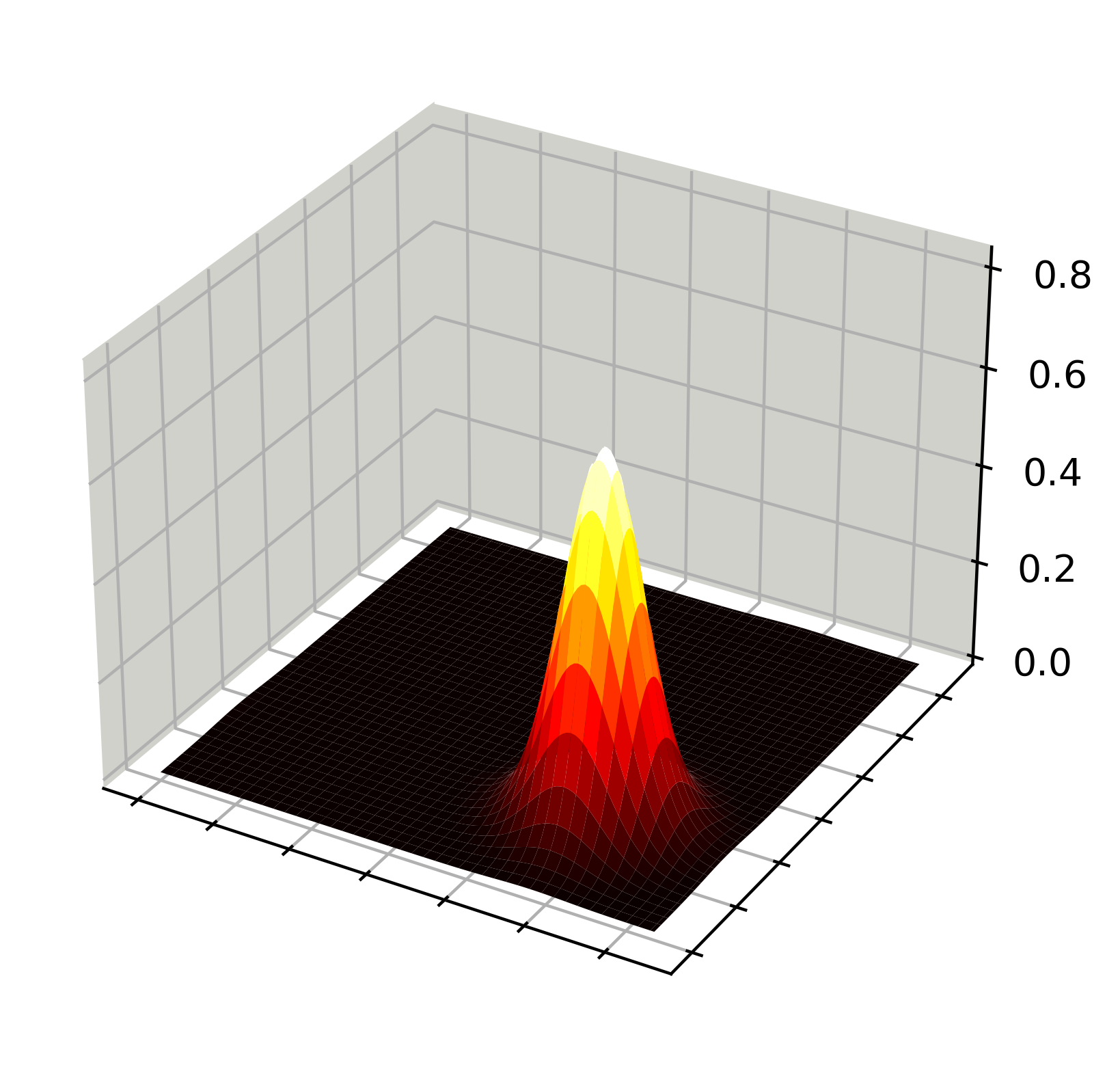}
\caption{\label{init_cond}\emph{Left}: 3d heat map of $\overline{\rho}_{0,reg} $; \emph{Right}: 3d heat map of $\overline{\rho}_{0,irreg} $}
\end{center}
\end{figure}
\item Time-stepping: we stick to a simple explicit Euler-Maruyama scheme, i.e., we pick $b_0 = 0, b_1 = 1$ in \eqref{FullyDiscreteDK}.
\item Multilevel Monte Carlo discretisation: we consider at most $ L = 5 $, and use Right-Most Nearest Neighbours coupling, see Subsection \ref{secRM}.
When $L=5$, for the finest level in space, each axis is split in $2^7= 128$ parts, i.e., 
$
h_{\min}=2\pi\cdot 2^{-7}\approx 0.05$. Furthermore, the finest level in time has time-step $t_{min}=10^{-3}$. 
Consecutive levels are designed to preserve the CFL condition: specifically, we choose
$h_{\ell-1}=2h_{\ell}$ and $ \tau_{\ell - 1} = 4\tau_{\ell}$,
so that $\cfl = \tau_{\ell}/h_{\ell}^2$ is constant over $\ell$.
Putting all together, we have
\begin{align*}
h_\ell = 2\pi \cdot 2^{-(2+\ell)}, \qquad \tau_\ell = 10^{-3}\cdot 4^{5-\ell}.
\end{align*}

Concerning the number of samples on each level:
\begin{itemize}
\item For the purpose of estimating the computational gain of the MLMC method for given accuracy $\varepsilon$ (so for the experiments of \underline{\emph{Type 1}}), the number of samples is determined by Algorithm \ref{algorithm} (i.e., by \eqref{opt_M_ell}).
\item For the purpose of validating the variance reduction factor  estimate \eqref{factor_var_red} (so for the experiments of \underline{\emph{Type 2}}), we instead run the MLMC with the pre-determined geometric progression\footnote{We chose this geometric progression for $M_{\ell}$ -- which is slightly different than that indicated in \eqref{NumberSamplesForLevels} -- for purely practical implementation reasons. Note that this choice does not affect the scaling of the variance reduction factor \eqref{factor_var_red}, and this is also verified numerically.} $M_{\ell-1} = 4M_{\ell}$. 
\end{itemize} 
% Furthermore, the number of samples scales with factor four with each level change, i.e., $M_{\ell-1} = 4M_{\ell}$

For qualitative illustration purposes, we show snapshots of a given trajectory of \eqref{FullyDiscreteDK} in Figure \ref{trajectory_snapshots_finest} (2d heat maps) and Figure \ref{init_final_noisy_profile_rough} (3d heat maps)\footnote{The graphics commands used to generate Figure \ref{trajectory_snapshots_finest} are based on the code available at: \url{https://scipython.com/book/chapter-7-matplotlib/examples/the-two-dimensional-diffusion-equation/}}.

\begin{figure}[h]
\begin{center}
\includegraphics[width=0.49\linewidth]{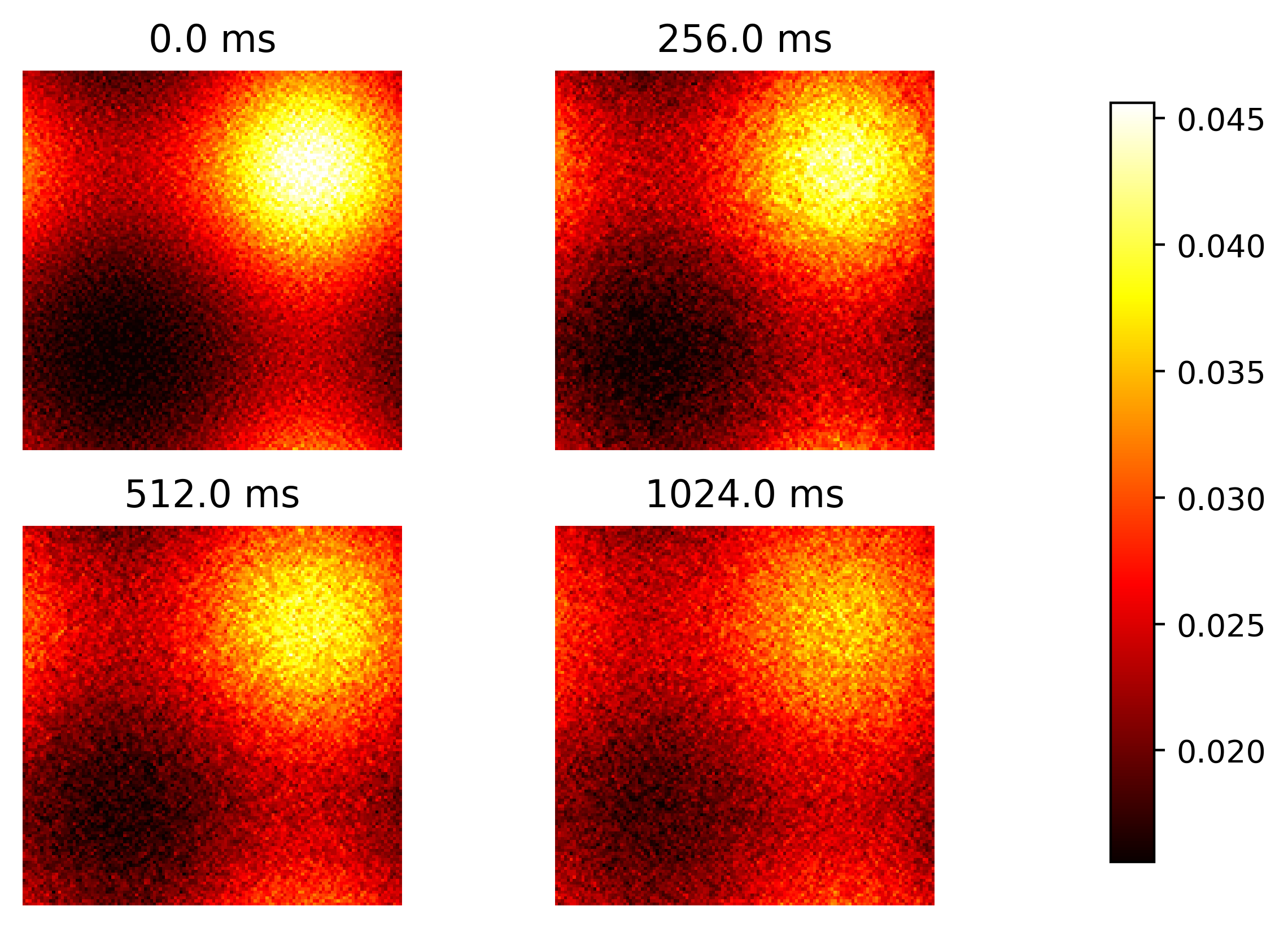}
\includegraphics[width=0.49\linewidth]{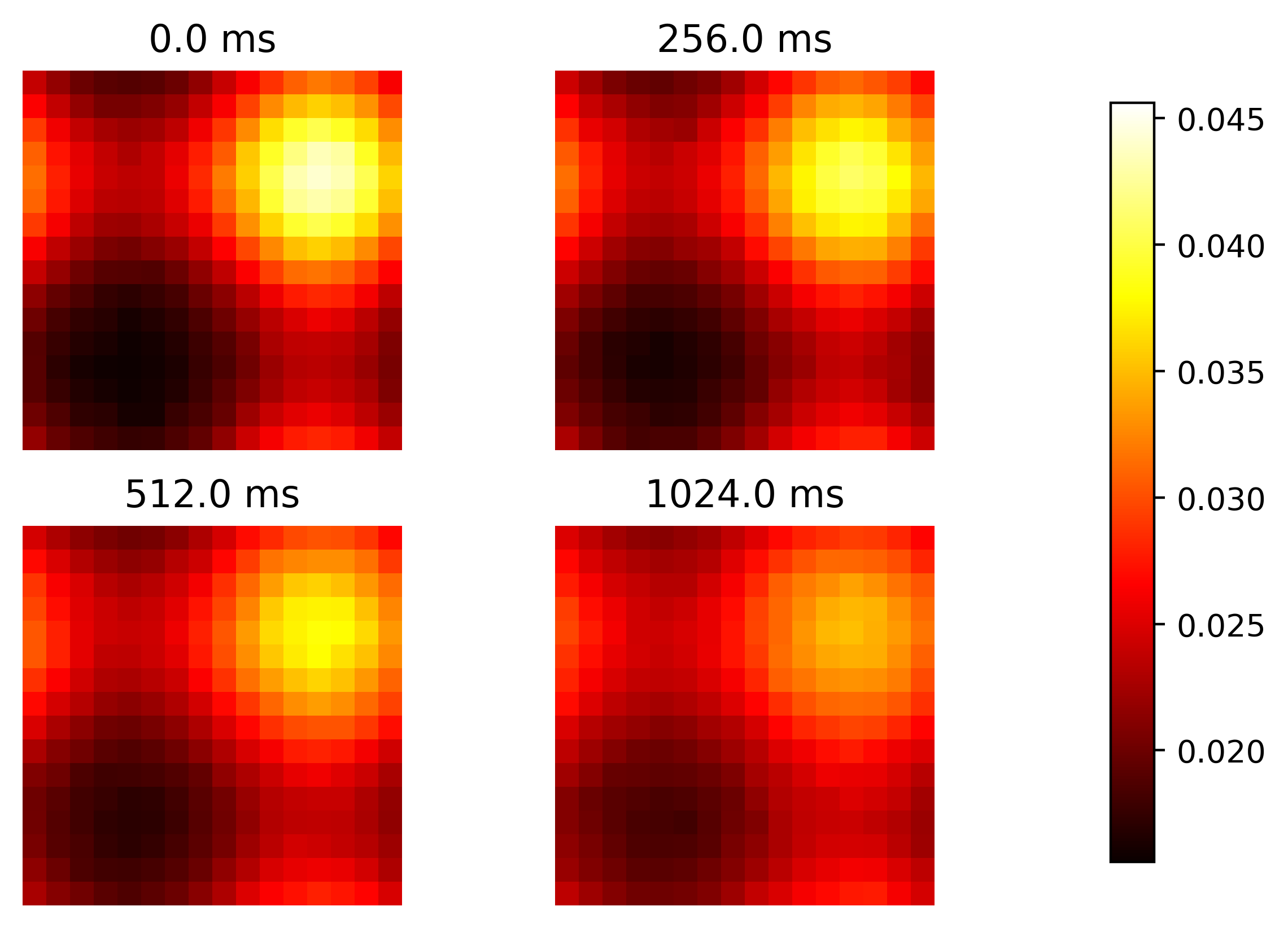}
\caption{\label{trajectory_snapshots_finest} 2d snapshots of a trajectory of the discrete Dean--Kawasaki equation \eqref{FullyDiscreteDK} started from $\overline{\rho}_{0,reg}$ and with $N=2\cdot 10^6$ particles. \emph{Left plot}: finest discretisation level $(h=2\pi\cdot 2^{-7}, \tau =0.001)$;  \emph{Right plot}: coarser discretisation level $(h=2\pi\cdot 2^{-4}, \tau =0.001 \cdot 4^3)$.}
\end{center}
\end{figure}

\item Time domain: we run simulations on the interval $[0,2^{10}\cdot t_{min}]=[0,1.024]$.
\item Differential operators: the second-order operators used in  \eqref{FullyDiscreteDK} are
\begin{align*}
\Delta_h a_h(x) & := \frac{-4a_h(x)+\sum_{y\sim x}a_h(y)}{h^2}, \\
\nabla_h \cdot b_h(x) & := \sum_{r=1}^{d}{\frac{[b_h]_r(x+hf_r) - [b_h]_r(x-hf_r)}{2h}}
\end{align*}
where $y\sim x$ indicates that $x,y$ are adjacent grid points, the brackets $[b_h]_r$ indicate the $r$-th component of the vector $b_h$, and $\{f_r\}_{r=1}^{d}$ is the canonical basis of $\mathbb{R}^d$.

\item Test functions: we use $\psi(x) := x^2$ and $\phi(x) := \sin(x)+\sin(y)$.
\end{itemize}

\begin{figure}[h]
\begin{center}
\includegraphics[width=0.45\linewidth]{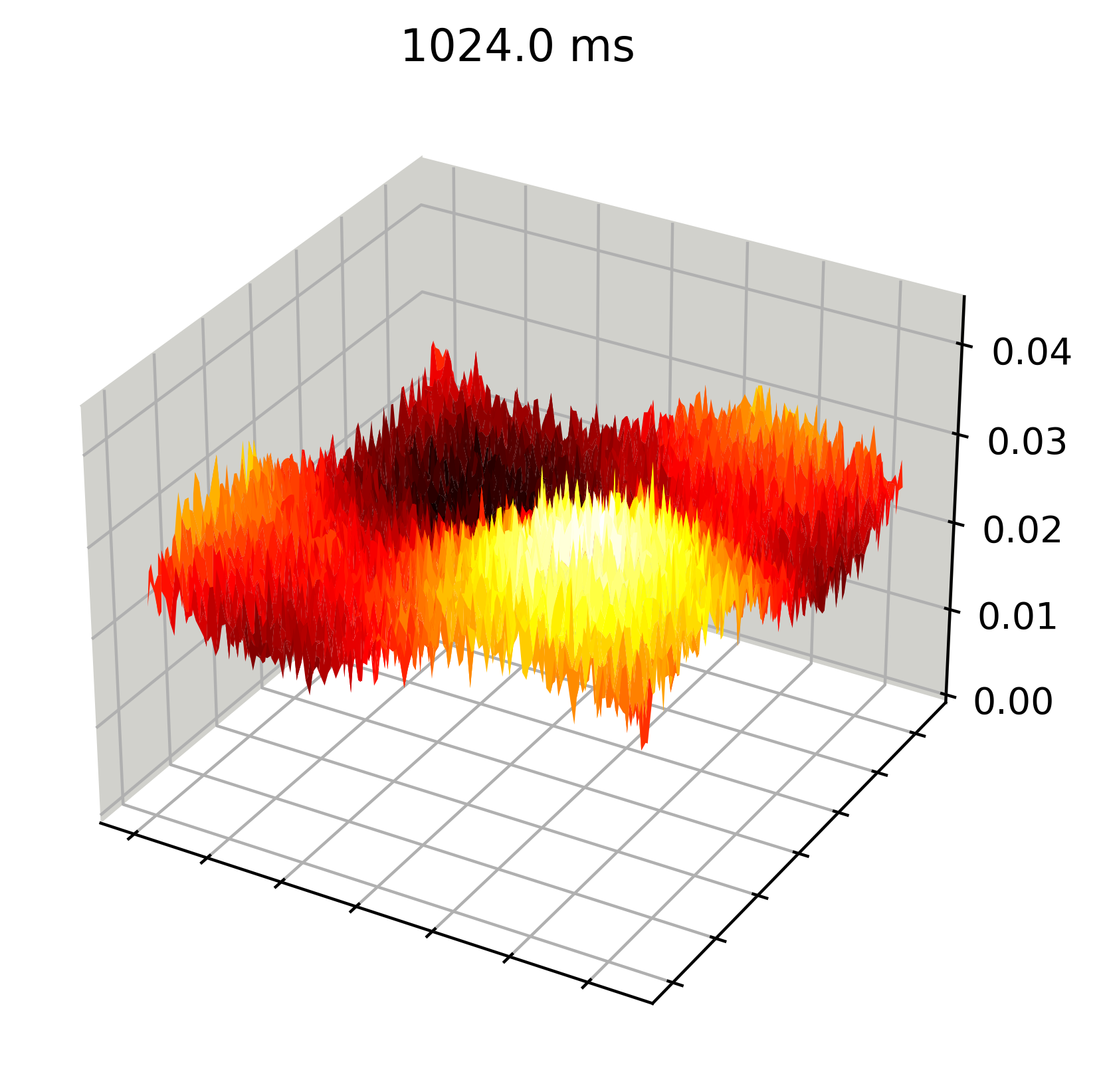}
\includegraphics[width=0.45\linewidth]{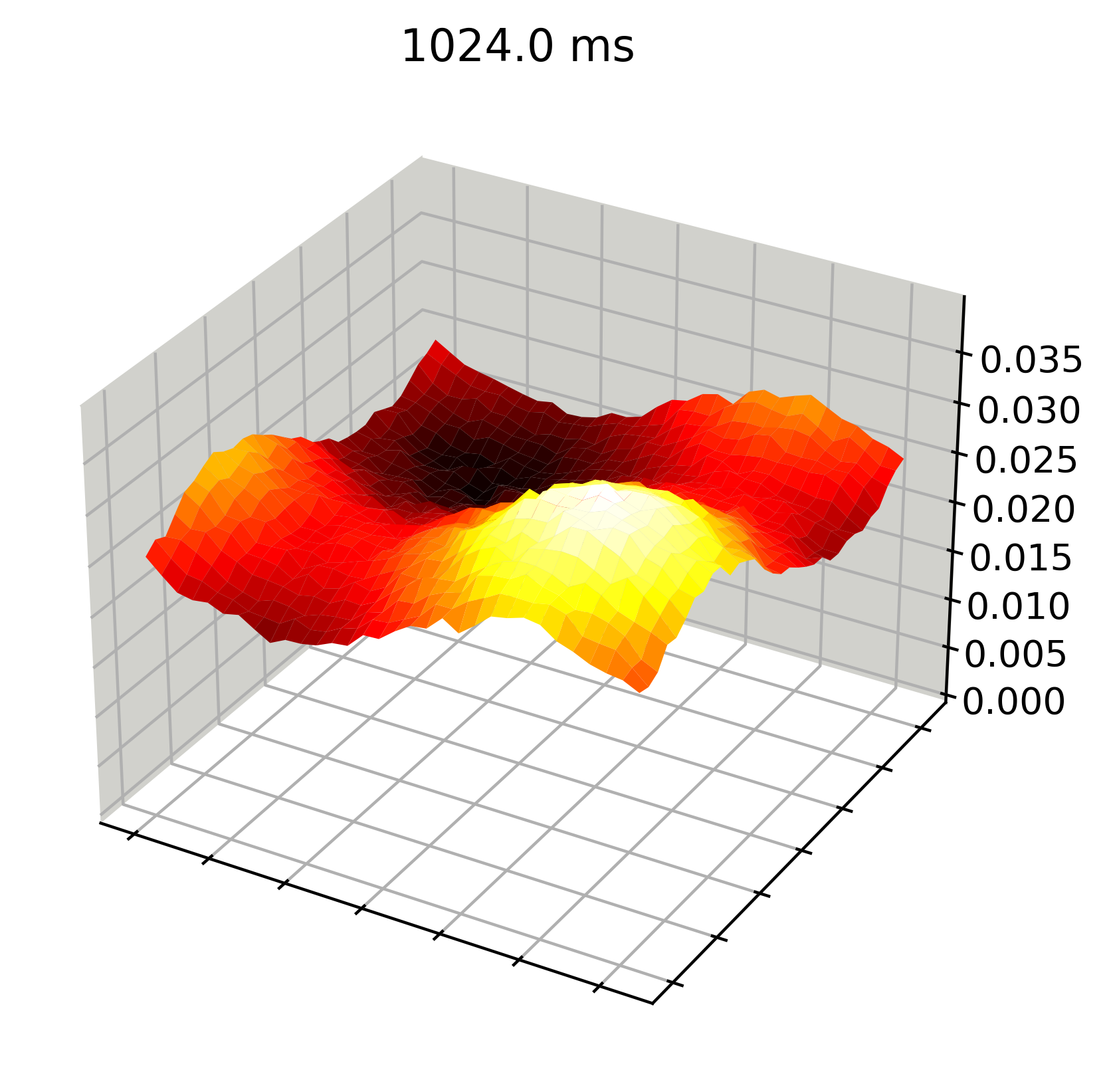}
\caption{\label{init_final_noisy_profile_rough}3d snapshots at time $T = 1024\,\, ms$ of a trajectory of the discrete Dean--Kawasaki equation \eqref{FullyDiscreteDK} started from $\overline{\rho}_{0,reg} $, and with $N=2\cdot 10^6$ particles, on different levels ($h=2\pi\cdot 2^{-7}, \tau =0.001$ for \emph{Left plot}, $h=2\pi\cdot 2^{-5}, \tau =0.001\cdot 4^2$ for \emph{Right plot}).}
\end{center}
\end{figure}

\subsection{Numerical results} We report quantitative results of the simulations for 
\begin{itemize}
\item $N=2\cdot 10^9$ and $\rho_0 = \overline{\rho}_{0,reg}$ (Figure \ref{quantitive_10_9_reg} and Table \ref{tab_9_s});
%\item $N=2\cdot 10^7$ and $\rho_0 = \overline{\rho}_{0,reg}$ (Figure \ref{quantitive_10_7_reg} and Table \ref{tab_9_s}); 
\item $N=2\cdot 10^5$ and $\rho_0 = \overline{\rho}_{0,reg}$ (Figure \ref{quantitive_10_5_reg} and Table \ref{tab_9_s});
\item $N=2\cdot 10^9$ and $\rho_0 = \overline{\rho}_{0,irreg}$ (Figure \ref{quantitive_10_9_irreg} and Table \ref{tab_9_s}).
\end{itemize}

Figures \ref{quantitive_10_9_reg}, \ref{quantitive_10_9_irreg}, \ref{quantitive_10_5_reg} are made of six plots, namely: variance estimate across levels (Top-Left), expected value estimate across levels (Top-Right), time-dependent estimate of variance for MC (Middle-Left), variance reduction factor between MC and MLMC for various varying $h_{\min}$ (Middle-Right), MLMC and MC computational cost versus accuracy $\varepsilon$ (Bottom-Left), and $\varepsilon^2$ times the MLMC and MC computational cost versus accuracy $\varepsilon$ (Bottom-Right).

We summarise our findings.

\subsubsection{Variance estimates} The quadratic decay $O(h^2_{\ell})$ for $Var[P_\ell - P_{\ell-1}]$ in \eqref{BoundVarConsecLevels} is clearly visible for the simulation with $N = 2\cdot 10^9$ and $\rho_0 = \overline{\rho}_{0,reg}$ (Figure \ref{quantitive_10_9_reg}). The same clear decay also holds in the case $N = 2\cdot 10^9$ and $\rho_0 = \overline{\rho}_{0,irreg}$ (Figure \ref{quantitive_10_9_irreg}), even though \eqref{BoundVarConsecLevels} would only predict linear decay $O(h_{\ell})$ in this case. The reason for the better-than-expected result (cfr. Remark \ref{Quadratic_vs_Linear}), is likely to be the specific nature of the mean-field limit, in which areas of low density coincide with areas of low gradient values, thus making the constant multiplying the quadratic contributions sufficiently low so as to win over linear decay.
When $N$ is decreased to $N = 2 \cdot 10^5$ (with $\rho_0 = \overline{\rho}_{0,reg}$), the quadratic decay breaks down for the lowest value of $h$ (Figure \ref{quantitive_10_5_reg}): this is to be expected from \eqref{BoundVarConsecLevels}, as the contribution $\propto N^{-1}h_\ell^{-2}$ starts dominating over the numerical error $\propto h^2_\ell$ at this point.
\subsubsection{Expected values estimates} Quadratic decay $O(h^2_{\ell})$ for $\mean{P_\ell - P_{\ell-1}}$ is clearly visible in Figures \ref{quantitive_10_9_reg}, \ref{quantitive_10_9_irreg}, where $N = 2\cdot 10^9$. When $N=2\cdot 10^5$ (Figure \ref{quantitive_10_5_reg}), the bound saturates its $h$-accuracy, and  breaks down just like the one for the variance. This is consistent with the results showed in \cite{cornalba2021dean}, and validates the systematic error of our method.

\subsubsection{Variance decay for MC}\label{sec_var_MC_decay} We plot the variance of the standard Montecarlo estimator on level $L=5$ as a function of the computational time. 
%Whenever the computational time is too large, we provide a variance extrapolation based on the last computed values. 

\subsubsection{Variance reduction of MLMC over MC}

In all cases with the exception of the simulation with $N = 2\cdot 10^5$, there is clear agreement between the variance reduction between MC/MLMC methods, and its ideal predicted growth $O(h^{-2}_{\min}|\log_2(h_{\min})|^{-1})$. 
%The variance reduction for the simulations with $N = 2\cdot 10^7$ and $N = 2\cdot 10^9$ started from $\rho_0 = \overline{\rho}_{0,reg}$ is roughly the same, as it should be since in both cases the variance decay is almost perfectly quadratic and the initial datum is the same. 
For $N=2\cdot 10^5$, the variance reduction factor stops increasing once the smallest value of $h_{\min}$ is reached. This is consistent with \eqref{GainFactor}.

\subsubsection{Accuracy $\varepsilon$ versus computational time (and $\varepsilon^2\times$computational time)} For all experiments involving the regular initial condition $\overline{\rho}_{0} = \overline{\rho}_{0,reg}$, we run Algorithm \ref{algorithm} for these values for the accuracy $\varepsilon$: 
\begin{align}\label{accuracy_val}
\varepsilon = 10^{-1.4}, 10^{-1.7}, 10^{-2}, 10^{-2.2}, 10^{-2.4}, 10^{-2.6}.
\end{align}
For the experiment involving the irregular initial condition $\overline{\rho}_{0} = \overline{\rho}_{0,irreg}$, we run Algorithm \ref{algorithm} for these values for the accuracy $\varepsilon$:
\begin{align}\label{accuracy_val_ir}
\varepsilon = 10^{-1.4}, 10^{-1.7}, 10^{-2}, 10^{-2.2}, 10^{-2.4}, 10^{-2.75}.
\end{align}
Concurrently, for each one of the accuracy values, we also run the standard MC method on the finest level reached by the MLMC method on that very accuracy value. Due to high computational cost, the running time for the MC method for the smallest values of $\varepsilon$ is extrapolated based on the MC variance trend.

We see that the MLMC provides a tangible computational gain across all simulations. This is especially visible for the smallest accuracy value across all simulations (see Table \ref{tab_9_s}). In the experiment with $\overline{\rho}_{0} = \overline{\rho}_{0,irreg}$, the MLMC method reaches level $L=5$, while in all other simulations the maximum level reached is level $L=4$ (this is reflected with a higher MLMC gain, see Table \ref{tab_9_s}). This is simply due to the systematic error decay having lower multiplicative constants for the experiments with regular datum $\overline{\rho}_{0} = \overline{\rho}_{0,reg}$. Therefore, in order to see the level change $L=5$ also for these simulations, we would have had to set a much lower accuracy (even lower that $\varepsilon = 10^{-2.75}$), and this was beyond our computing capabilities.

In the experiment with $\overline{\rho}_{0} = \overline{\rho}_{0,irreg}$, where level $L=5$ is reached, we see the beginning of the asymptotic region in $\varepsilon$, with MLMC and MC rates aligning with those given by Theorem \ref{main_res}. This is visible in Figure \ref{quantitive_10_9_irreg}, where the trend of $\varepsilon^2\times$({\tt computational time}) starts behaving like $O(\varepsilon^2 \cdot \varepsilon^{-2} \cdot \varepsilon^{-2/2}) = O(\varepsilon^{-1})$ for the MLMC algorithm, and like $O(\varepsilon^2 \cdot \varepsilon^{-2} \cdot \varepsilon^{-2/2-1}) = O(\varepsilon^{-2})$ for the standard MC method.
\begin{remark}
Additionally, note that the performance of the MLMC algorithm with $N=2\cdot 10^5$ and regular datum is similar to that of those with same datum and larger number of particles $N = 2\cdot 10^9$. This is because, for the accuracies considered, level $L=5$ -- at which the variance estimate breaks down -- is never reached.
\end{remark}

\section{Summary of work done, and final considerations}\label{summary_section}
In this paper, we have proved analytically and verified numerically that MLMC methods provide a speed-up in the computation of discretizations of the Dean-Kawasaki equation \eqref{DK}, despite this being a highly singular SPDE. We have proposed two different types of noise couplings (``Fourier coupling", Subsection \ref{secFourierCoupling}, and ``Nearest Neighbours (NN) coupling", Subsection \ref{secRM}) which grant bounds decaying like a power law in $2^\ell$ for cross-level variance and systematic error on all levels $\ell$ for which the average particle density $N\hl^d$ if sufficiently large (see \eqref{informal_lower_bound_density}). This has allowed us to get the MLMC complexity result \eqref{MLMC_complexity}, and the MLMC/MC variance reduction result \eqref{factor_var_red}.

While the simulations conducted so far provide, in our opinion, a clear indication of the effectiveness of our results (at least in the cases we have analyzed), we also believe that further testing is needed to consolidate the results. In this respect:
\begin{itemize}
\item It would be interesting to implement the Fourier coupling, and assess its competitive advantage against the NN coupling.
\item With the exception of the experiment started from $\overline{\rho}_{0} = \overline{\rho}_{0,irreg}$, it is difficult to see whether the predicted rate of growth of the MLMC cost is indeed behaving like $O(\varepsilon^{-2} \cdot \varepsilon^{-1})$, as it is given in \eqref{MLMC_complexity}.
This is mainly due to the MLMC algorithm staying of relatively low levels\footnote{This appears to be due -- for the case of regular initial datum $\rho_0 = \overline{\rho}_{0,reg}$ --  to systematic error estimates having very low leading constants for the setup considered, and thus preventing the MLMC algorithm from adding too many levels.}.
A thorough and comprehensive validation of the $\varepsilon$-dependence of the bound \eqref{MLMC_complexity} goes beyond the scope of this paper.
\end{itemize}

\begin{figure}[h]
\begin{center}
\includegraphics[width=0.48\linewidth]{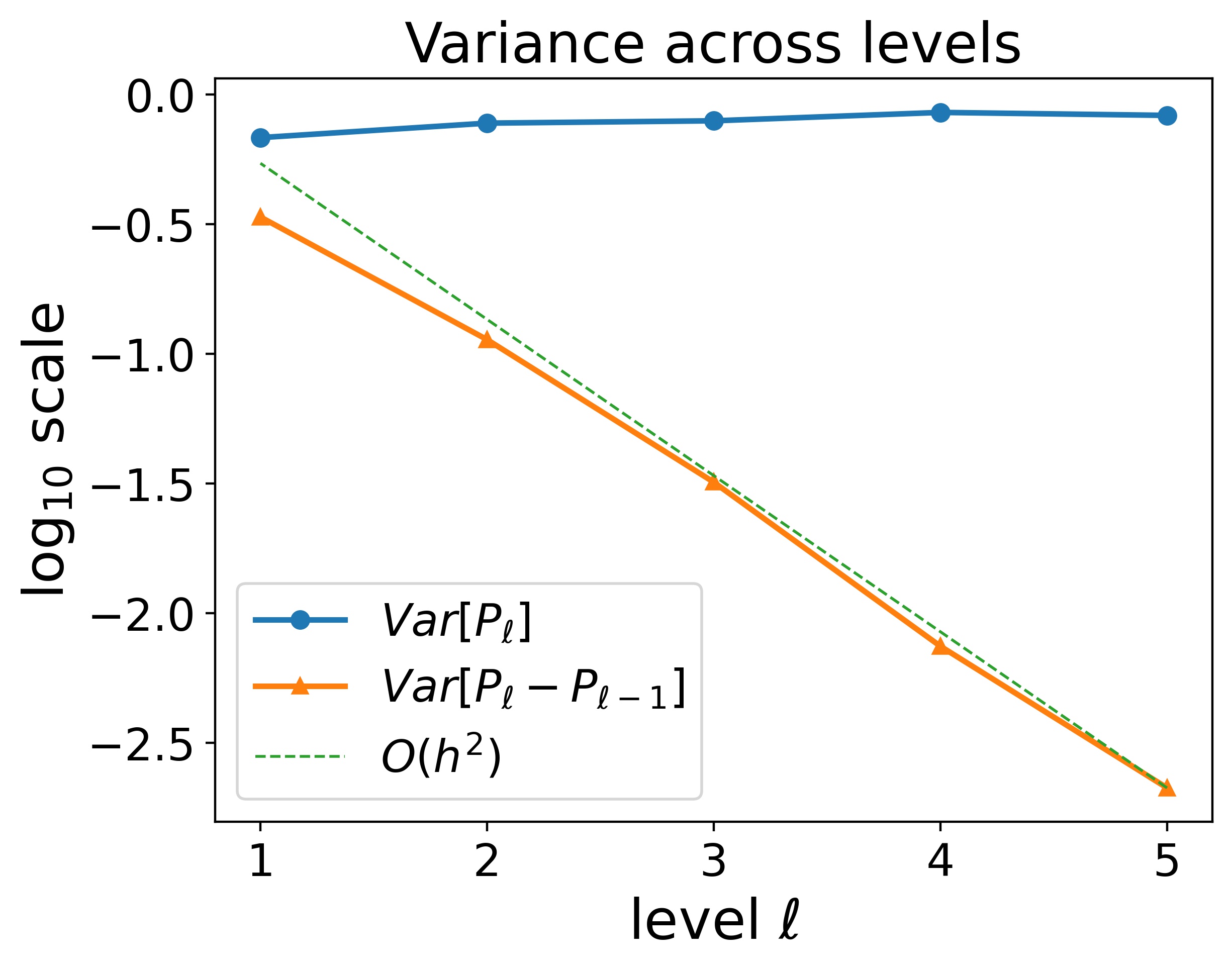}
\includegraphics[width=0.48\linewidth]{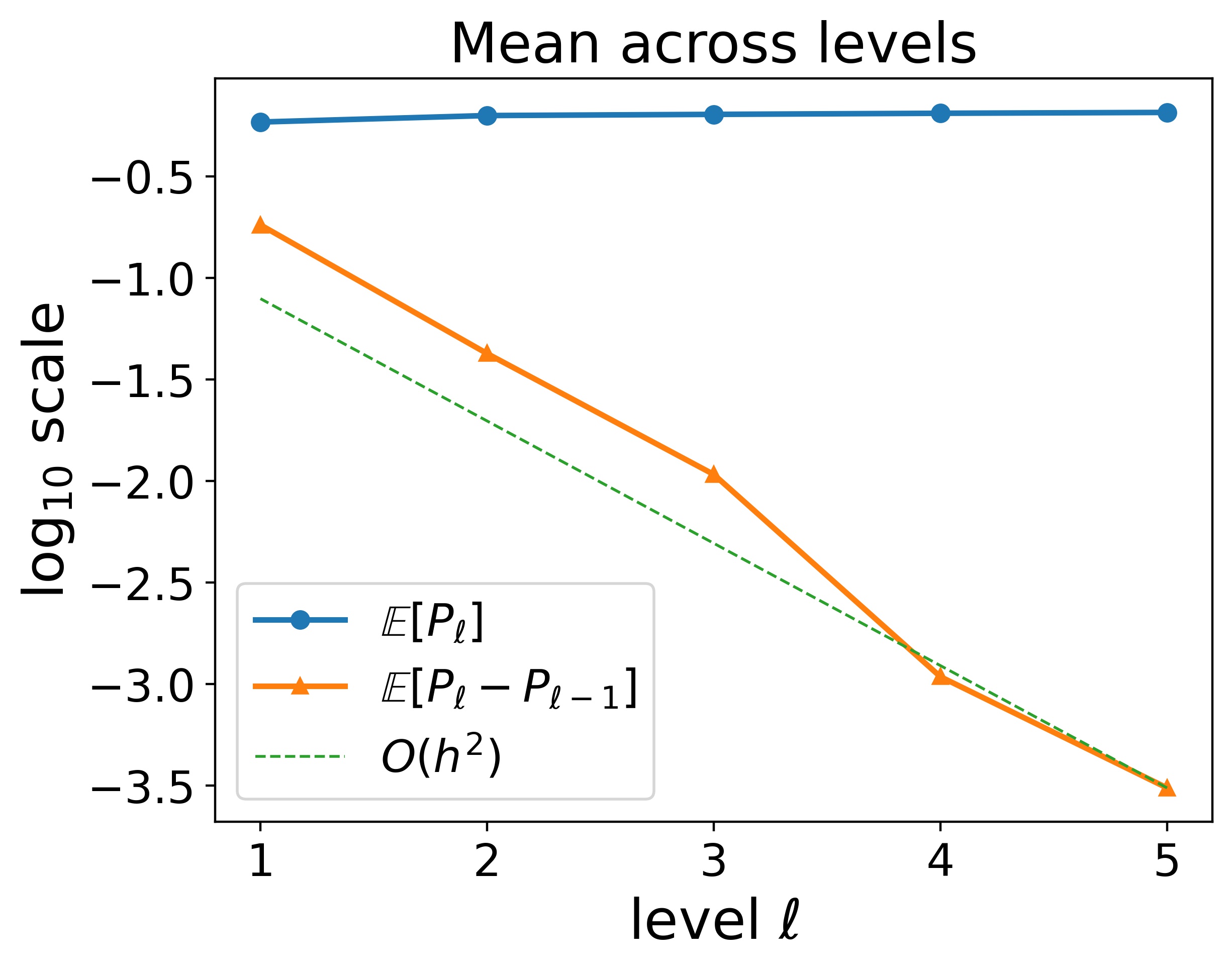}\\
\includegraphics[width=0.48\linewidth]{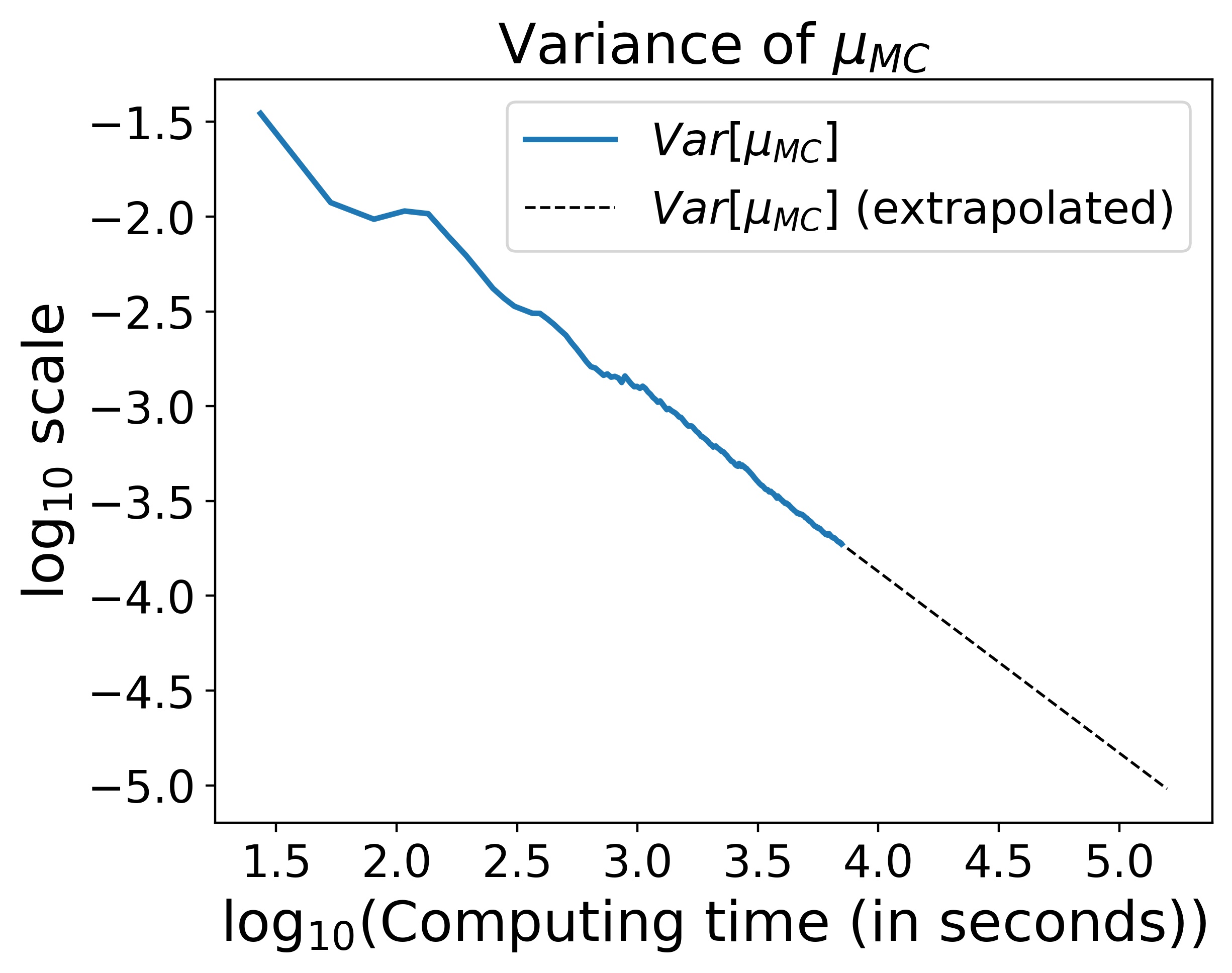}\hspace{0.7pc}
\vspace{0.7pc}
\includegraphics[width=0.48\linewidth]{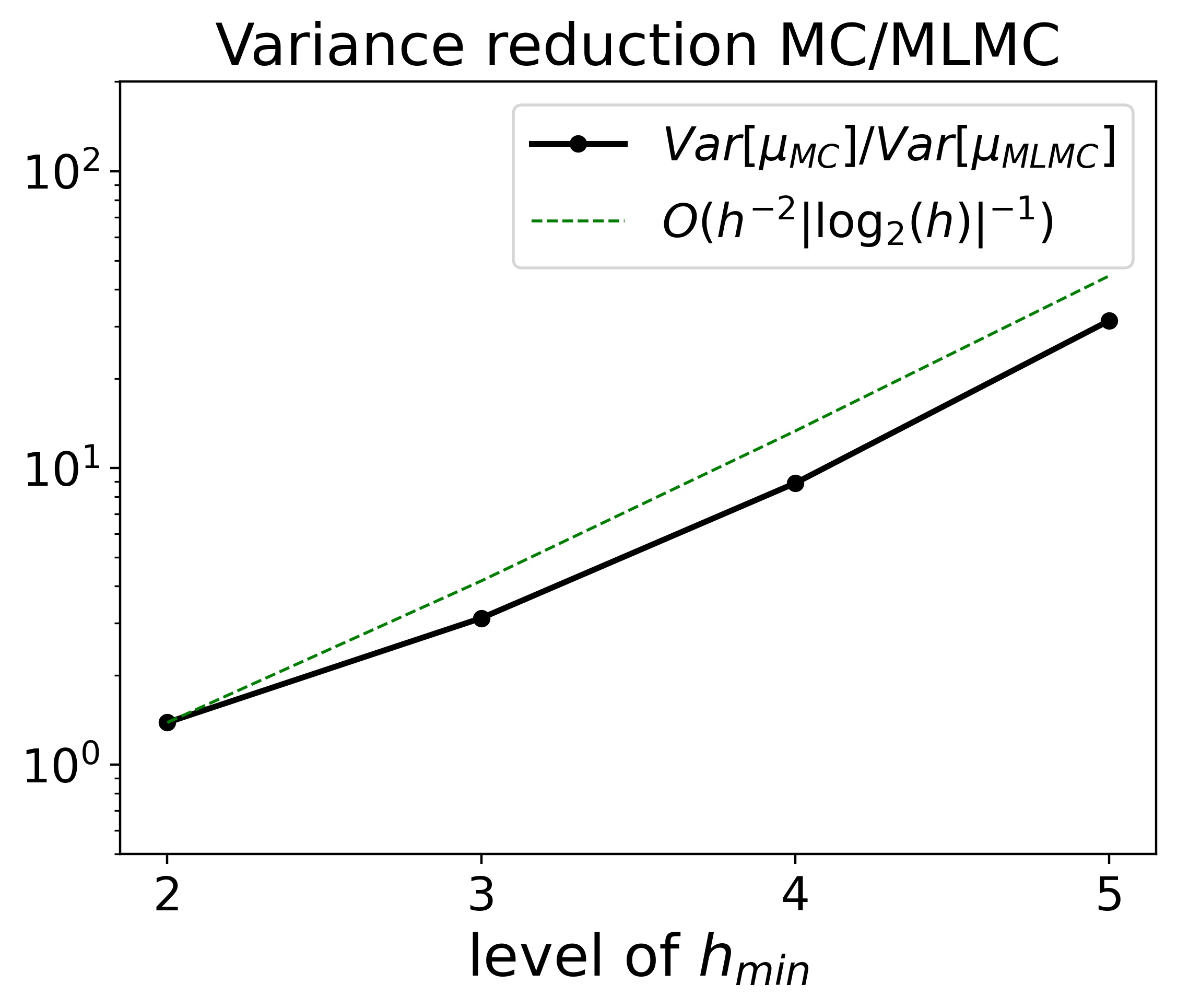}\\
\includegraphics[width=0.47\linewidth]{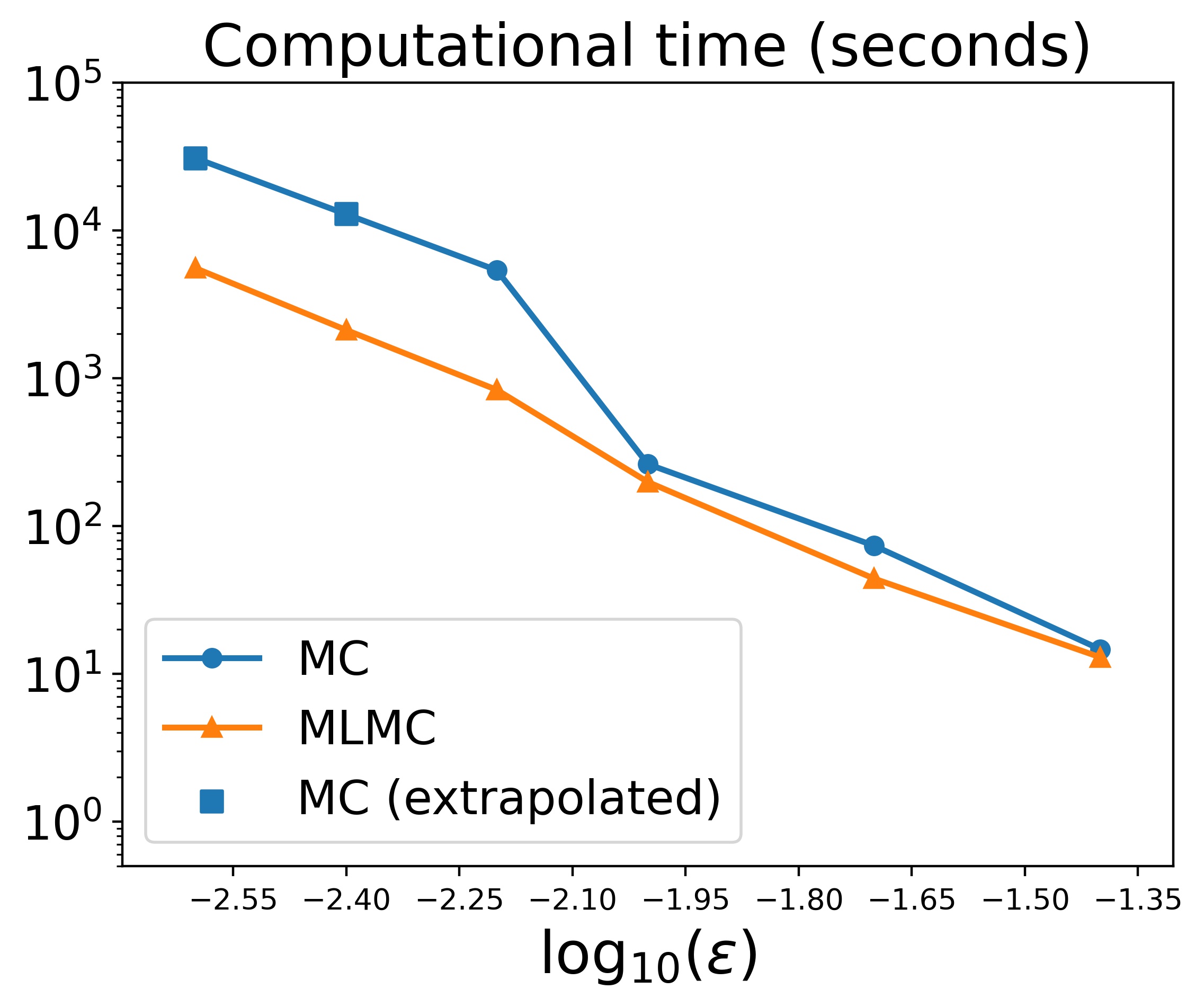}\hspace{0.7pc}
\includegraphics[width=0.49\linewidth]{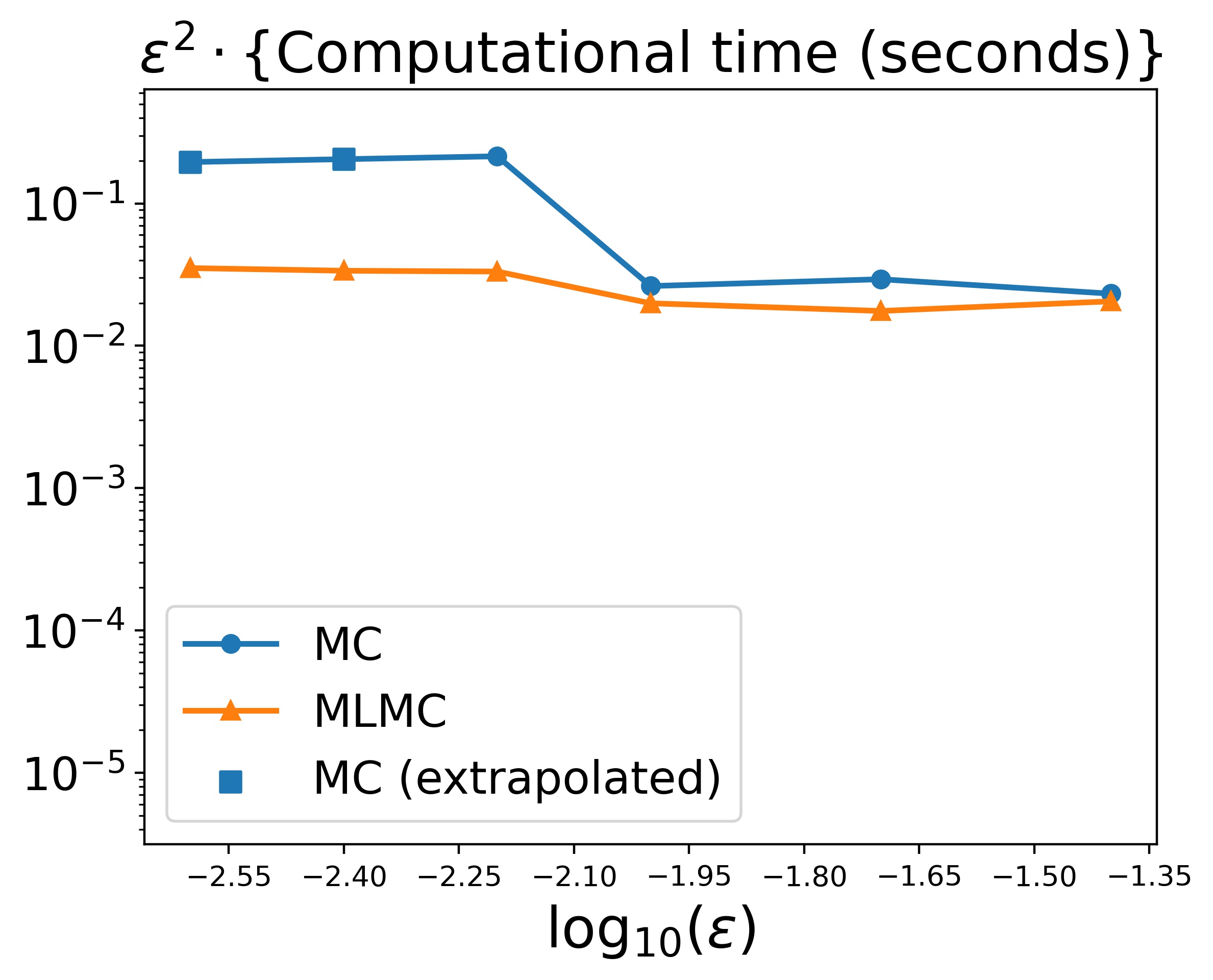}
\caption{\label{quantitive_10_9_reg} Computational results ($N=2\cdot 10^9$, $\overline{\rho}_{0} = \overline{\rho}_{0,reg}$).}
\end{center}
\end{figure}

\begin{table}[h]
     \begin{center}
     \begin{tabular}{|p{4cm}|||*{3}{c|}}
               \hline
     & Smallest $\varepsilon$  & $L$ reached & speed-up: $\frac{\mbox{time MC}}{\mbox{time MLMC}}$  \\[1ex]
               \hline\hline\hline
     $N=2\cdot 10^9$, $\overline{\rho}_{0} = \overline{\rho}_{0,reg}$   & $10^{-2.6}$ & $4$ &  $\approx 5.6$  \\
          \hline
   % $N=2\cdot 10^7$, $\overline{\rho}_{0} = \overline{\rho}_{0,reg}$ & $10^{-2.6}$ & $4$ &  $\approx 5.8$   \\
   %  \hline
    $N=2\cdot 10^5$, $\overline{\rho}_{0} = \overline{\rho}_{0,reg}$& $10^{-2.6}$ & $4$ &  $\approx 5.4$ \\
      \hline
     $N=2\cdot 10^9$, $\overline{\rho}_{0} = \overline{\rho}_{0,irreg}$ &$10^{-2.75}$ & $5$ &  $\approx 22.8$\\
      \hline
     \end{tabular}
     \end{center}
     \caption{MLMC/MC speed-up factors for smallest accuracies considered across experiments.}
     \label{tab_9_s}
\end{table}

%\begin{figure}[h]
%\begin{center}
%\includegraphics[width=0.48\linewidth]{var_20000000.jpeg}
%\includegraphics[width=0.48\linewidth]{mean_20000000.jpeg}\\
%\includegraphics[width=0.48\linewidth]{MC_MLMC_variances_20000000.jpeg}\hspace{0.7pc}
%\vspace{0.7pc}
%\includegraphics[width=0.48\linewidth]{var_red_diff_h_20000000.jpeg}\\
%\includegraphics[width=0.48\linewidth]{cost_MLMC_20000000.jpeg}\hspace{0.7pc}
%\includegraphics[width=0.48\linewidth]{eps_time_cost_MLMC_20000000.jpeg}
%\caption{\label{quantitive_10_7_reg} Computational results ($N=2\cdot 10^7$, $\overline{\rho}_{0} = \overline{\rho}_{0,reg}$).}
%\end{center}
%\end{figure}

%\begin{table}[H]
%     \begin{center}
%     \begin{tabular}{|p{2.5cm}||*{7}{l|}}
%          \hline
%     Accuracy $\varepsilon$  & $10^{-2.6}$ & $10^{-2.4}$ & $10^{-2.2}$ &  $10^{-2}$ & $10^{-1.7}$ & $10^{-1.4}$ & $10^{-1.1}$ \\
%     \hline
%     Level $L$ reached  & $4$ & $4$ & $4$ &  $3$ & $3$ & $3$ & $2$ \\
%      \hline
%     $\frac{\mbox{time MC}}{\mbox{time MLMC}}$ & $\dots$ & $\dots$ & $\dots$ &  $\dots$ & $\dots$ & $\dots$ & $\dots$ \\
%      \hline
%     \end{tabular}
%     \end{center}
%     \caption{Computational results ($N=2\cdot 10^7$, $\overline{\rho}_{0} = \overline{\rho}_{0,reg}$)}
%     \label{tab_7_s}
%\end{table}

\begin{figure}[h]
\begin{center}
\includegraphics[width=0.48\linewidth]{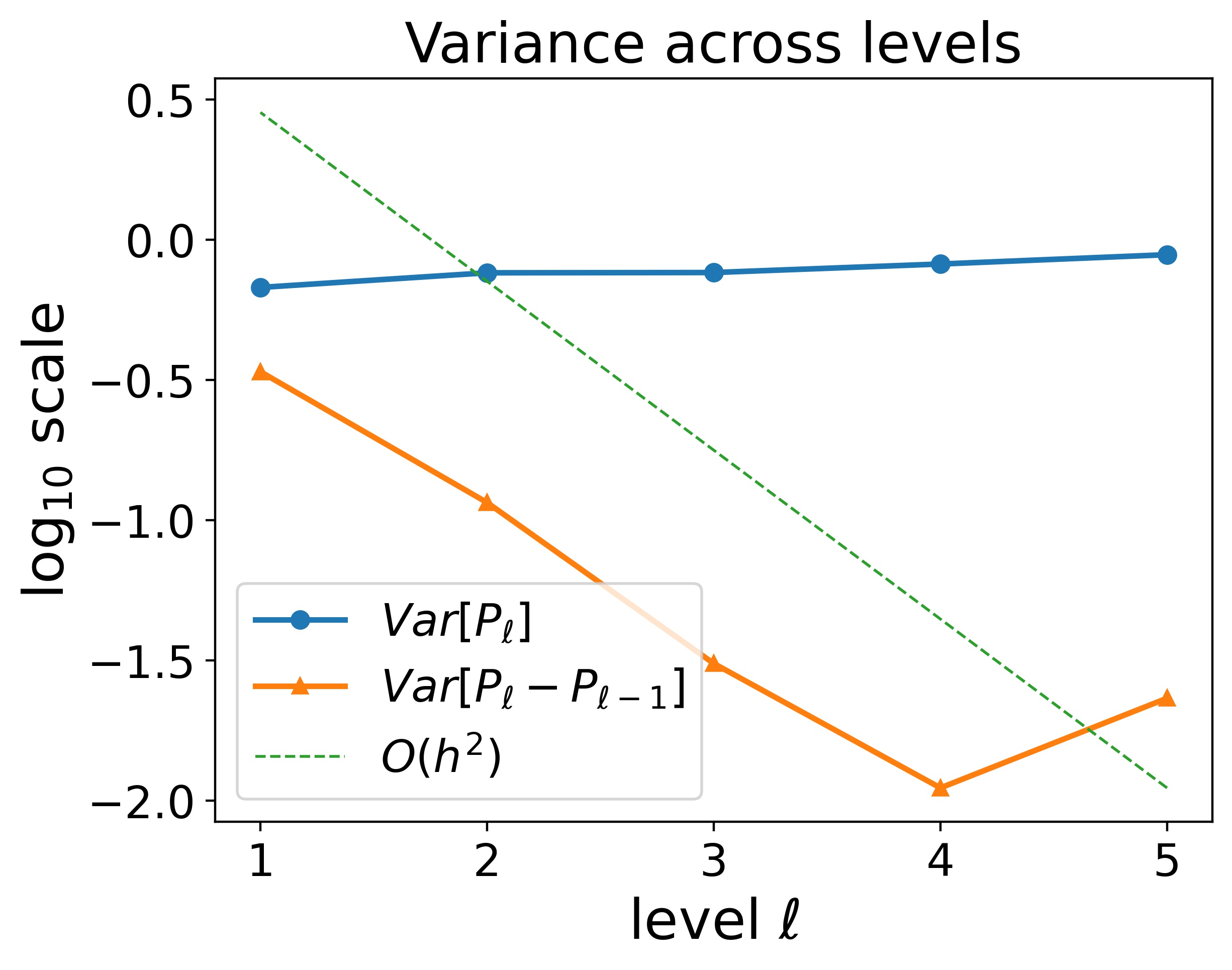}
\includegraphics[width=0.48\linewidth]{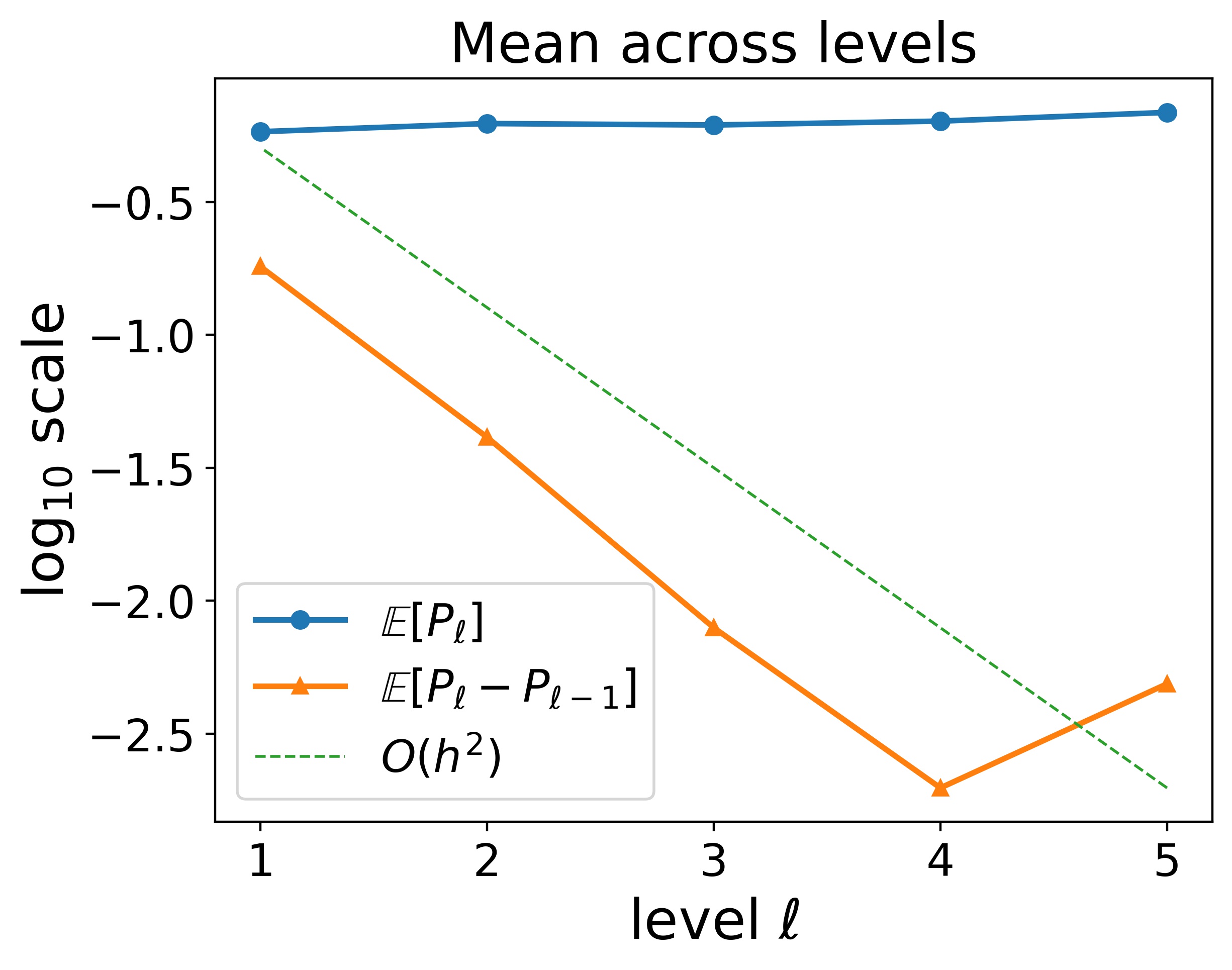}\\
\includegraphics[width=0.48\linewidth]{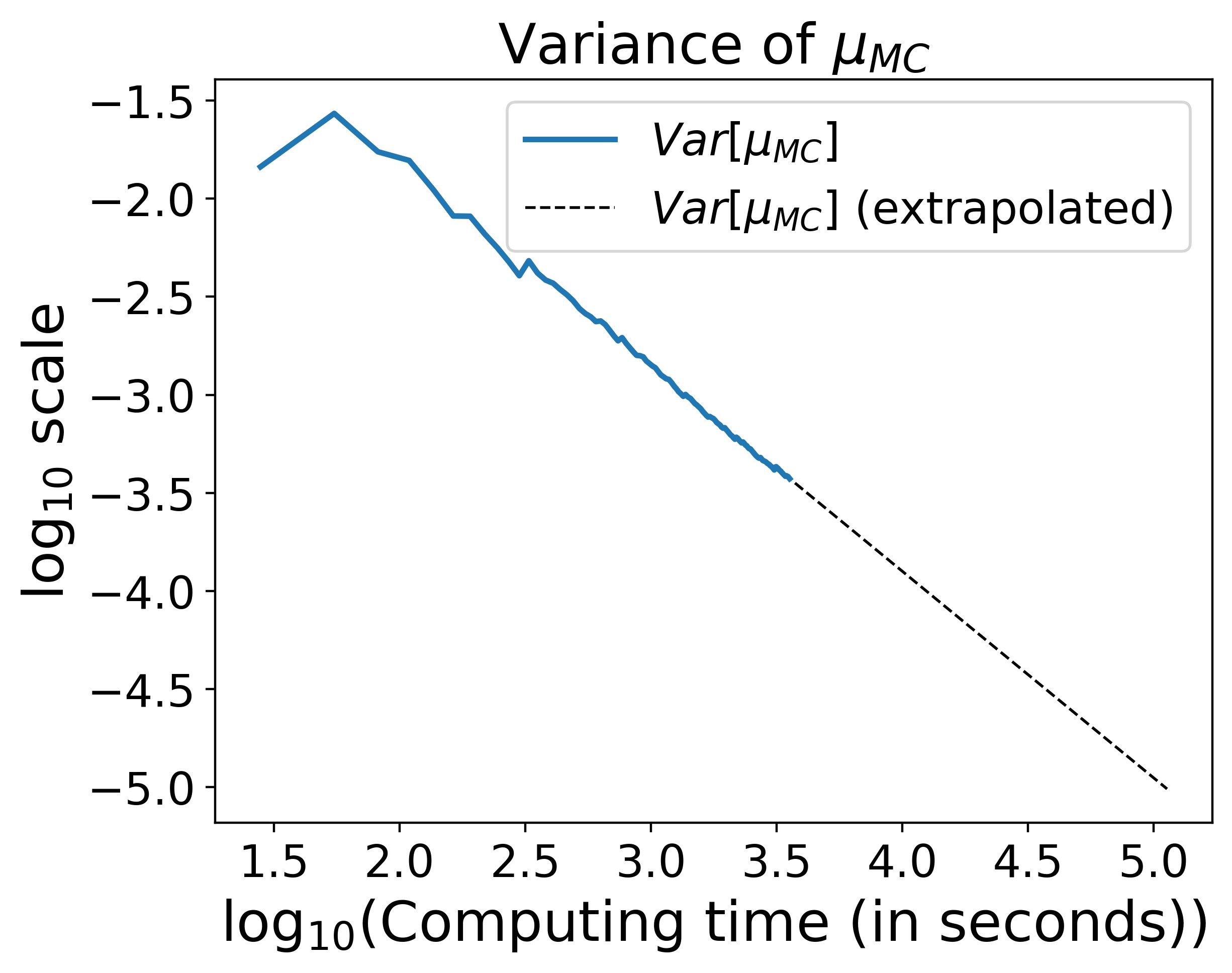}\hspace{0.7pc}
\vspace{0.7pc}
\includegraphics[width=0.48\linewidth]{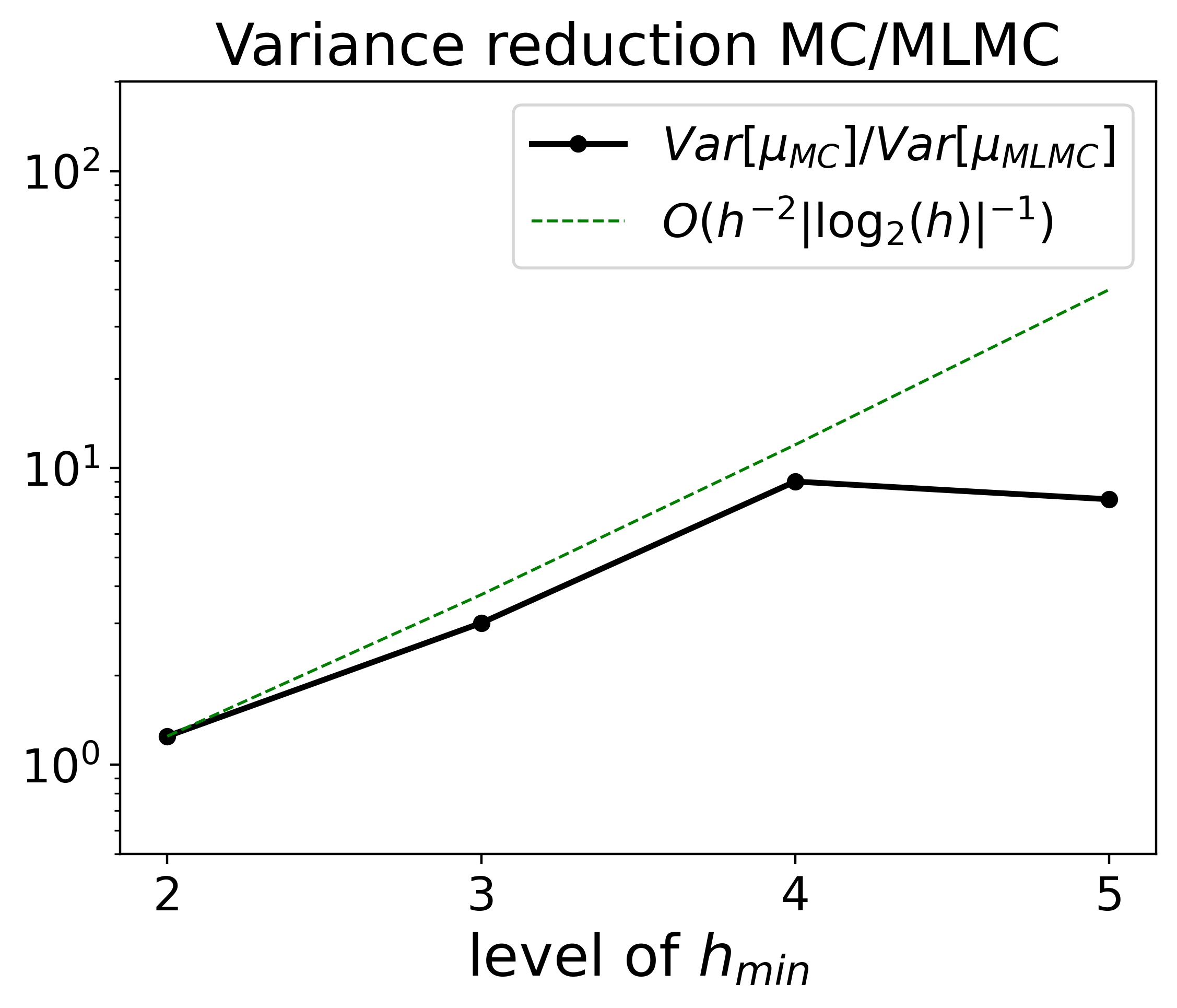}\\
\includegraphics[width=0.47\linewidth]{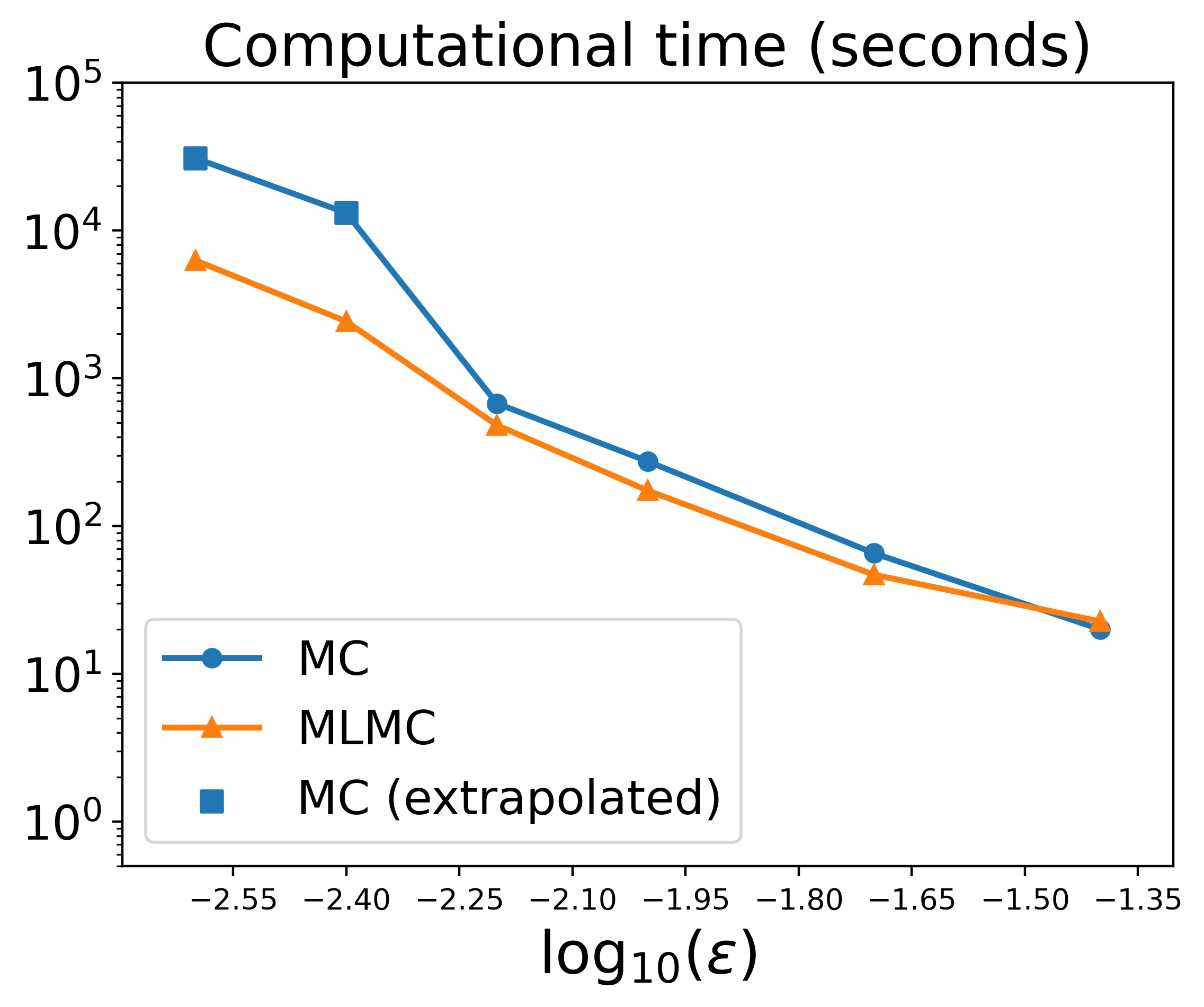}\hspace{0.7pc}
\includegraphics[width=0.49\linewidth]{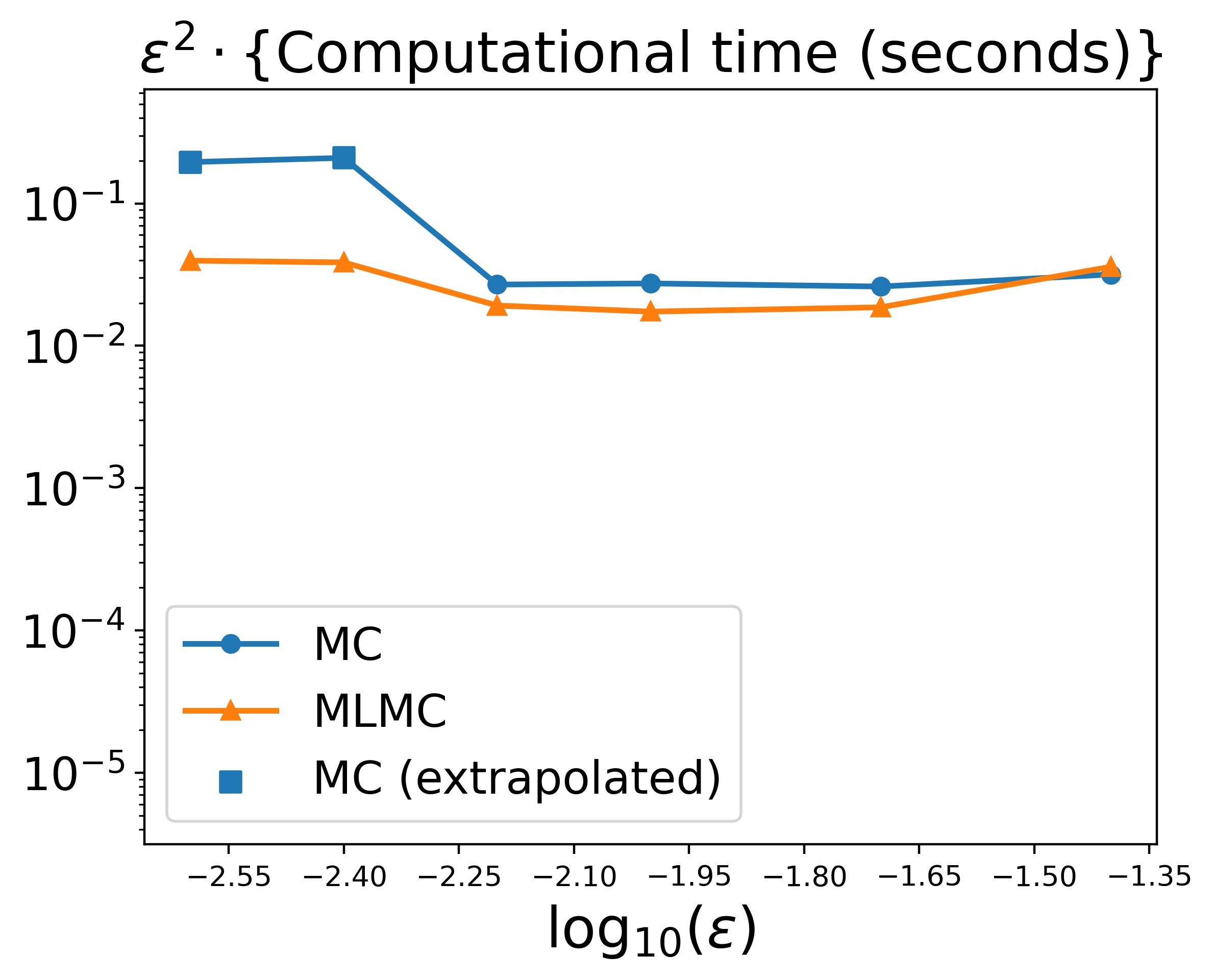}
\caption{\label{quantitive_10_5_reg} Computational results ($N=2\cdot 10^5$, $\overline{\rho}_{0} = \overline{\rho}_{0,reg}$).}
\end{center}
\end{figure}

%\begin{table}[H]
%     \begin{center}
%     \begin{tabular}{|p{2.5cm}||*{7}{l|}}
%          \hline
%     Accuracy $\varepsilon$  & $10^{-2.6}$ & $10^{-2.4}$ & $10^{-2.2}$ &  $10^{-2}$ & $10^{-1.7}$ & $10^{-1.4}$ & $10^{-1.1}$ \\
%     \hline
%     Level $L$ reached  & $4$ & $4$ & $4$ &  $3$ & $3$ & $3$ & $2$ \\
%      \hline
%     $\frac{\mbox{time MC}}{\mbox{time MLMC}}$ & $\dots$ & $\dots$ & $\dots$ &  $\dots$ & $\dots$ & $\dots$ & $\dots$ \\
%      \hline
%     \end{tabular}
%     \end{center}
%     \caption{Computational results ($N=2\cdot 10^5$, $\overline{\rho}_{0} = \overline{\rho}_{0,reg}$)}
%     \label{tab_5_s}
%\end{table}

\begin{figure}[h]
\begin{center}
\includegraphics[width=0.48\linewidth]{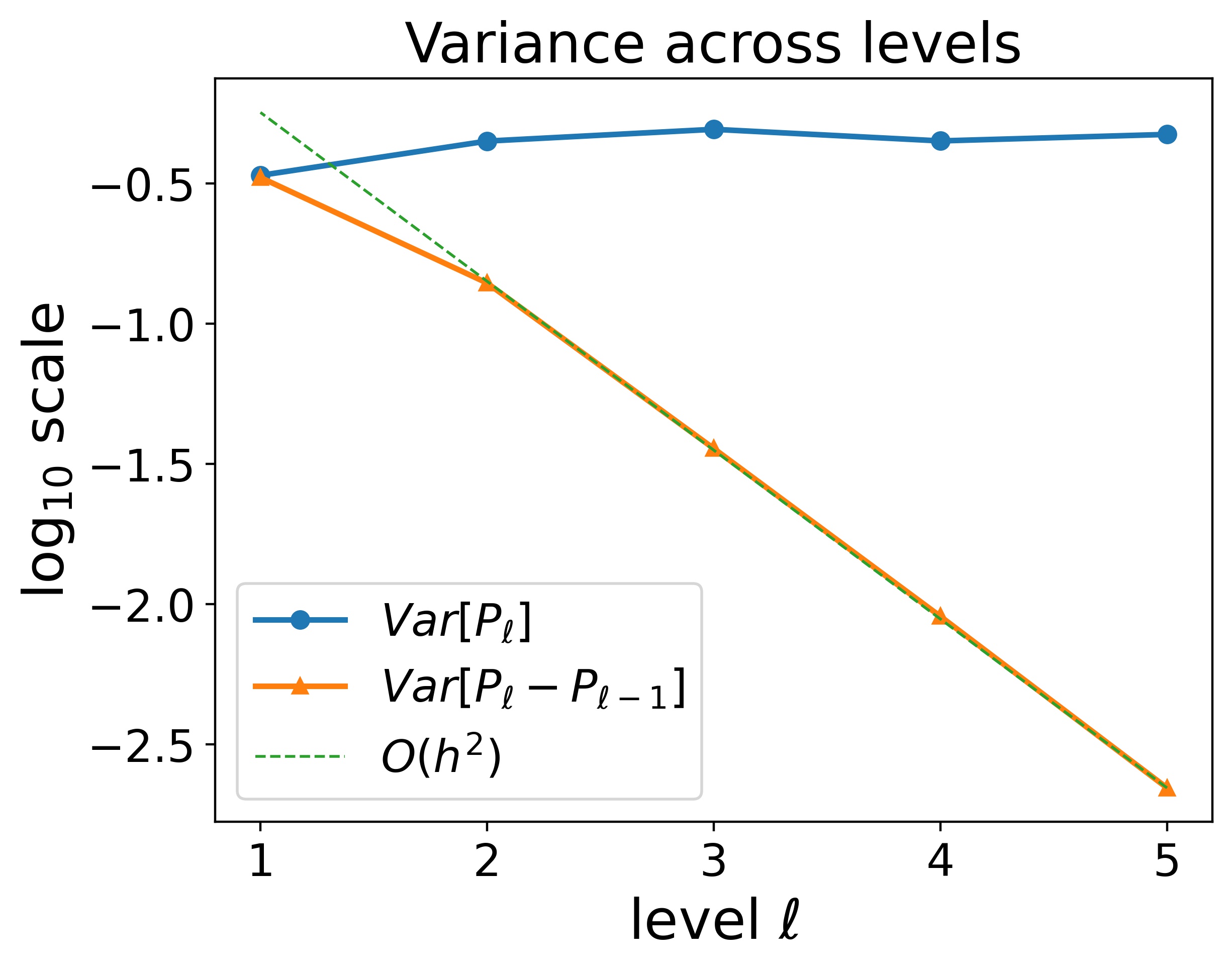}
\includegraphics[width=0.48\linewidth]{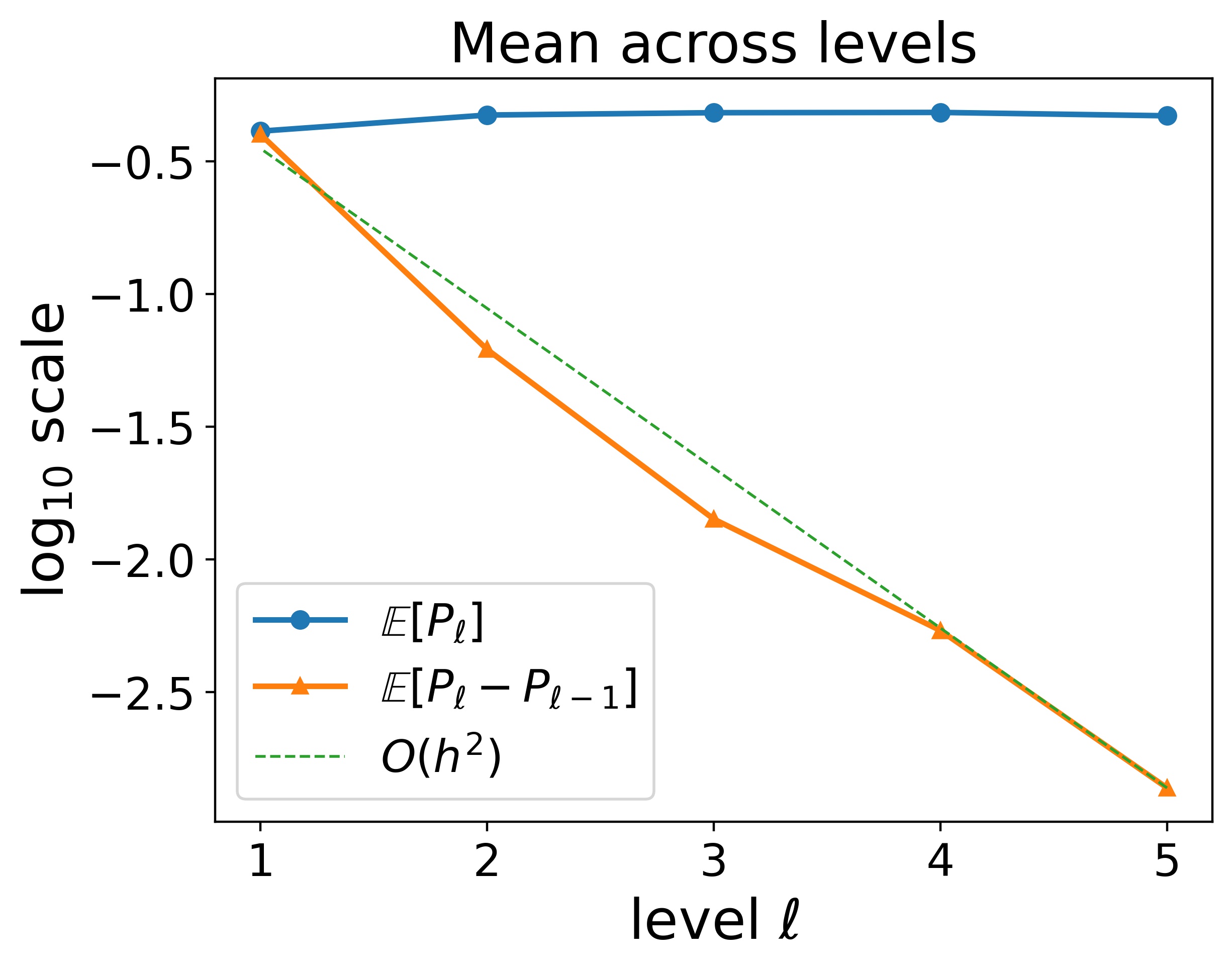}\\
\includegraphics[width=0.48\linewidth]{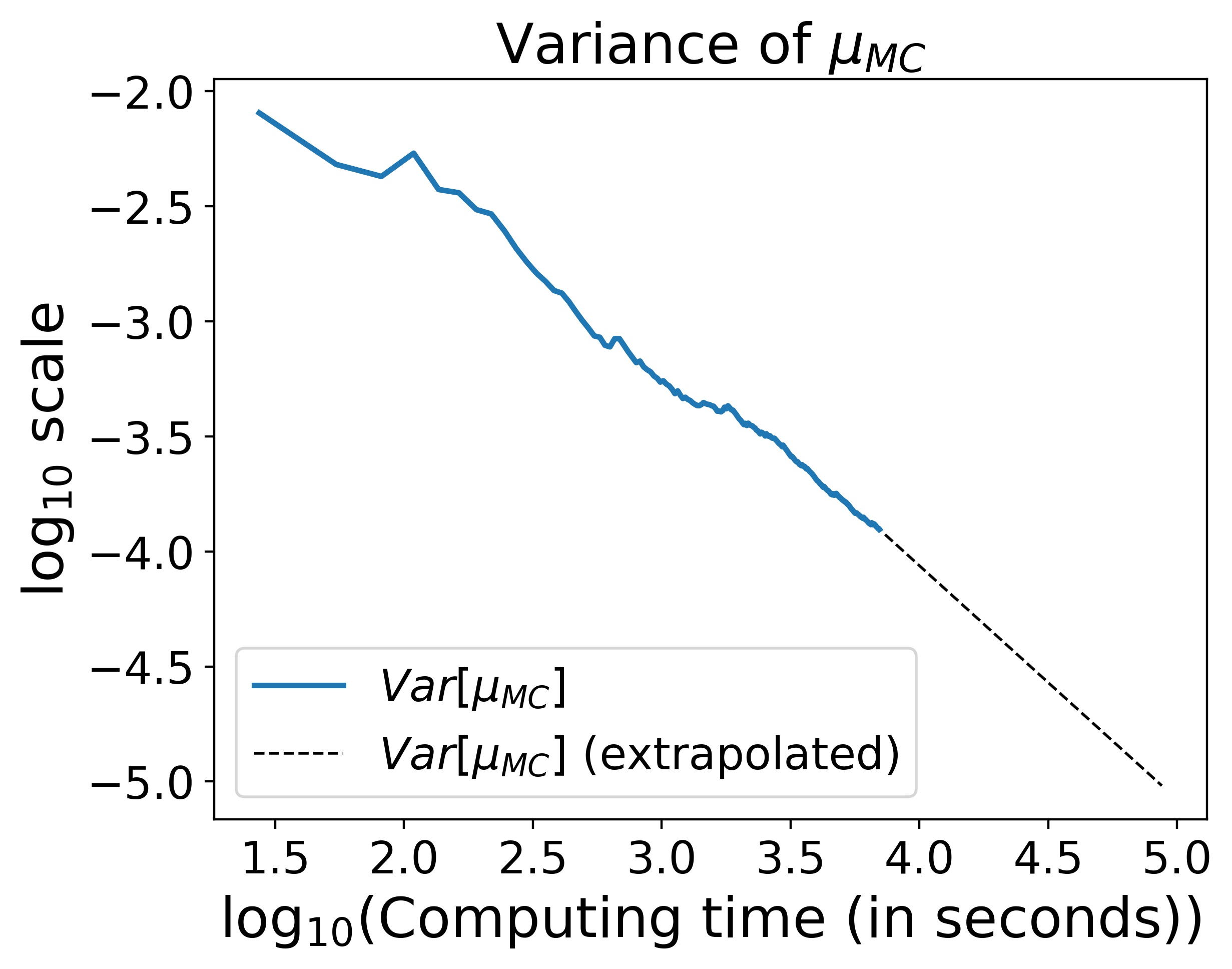}\hspace{0.7pc}
\vspace{0.7pc}
\includegraphics[width=0.48\linewidth]{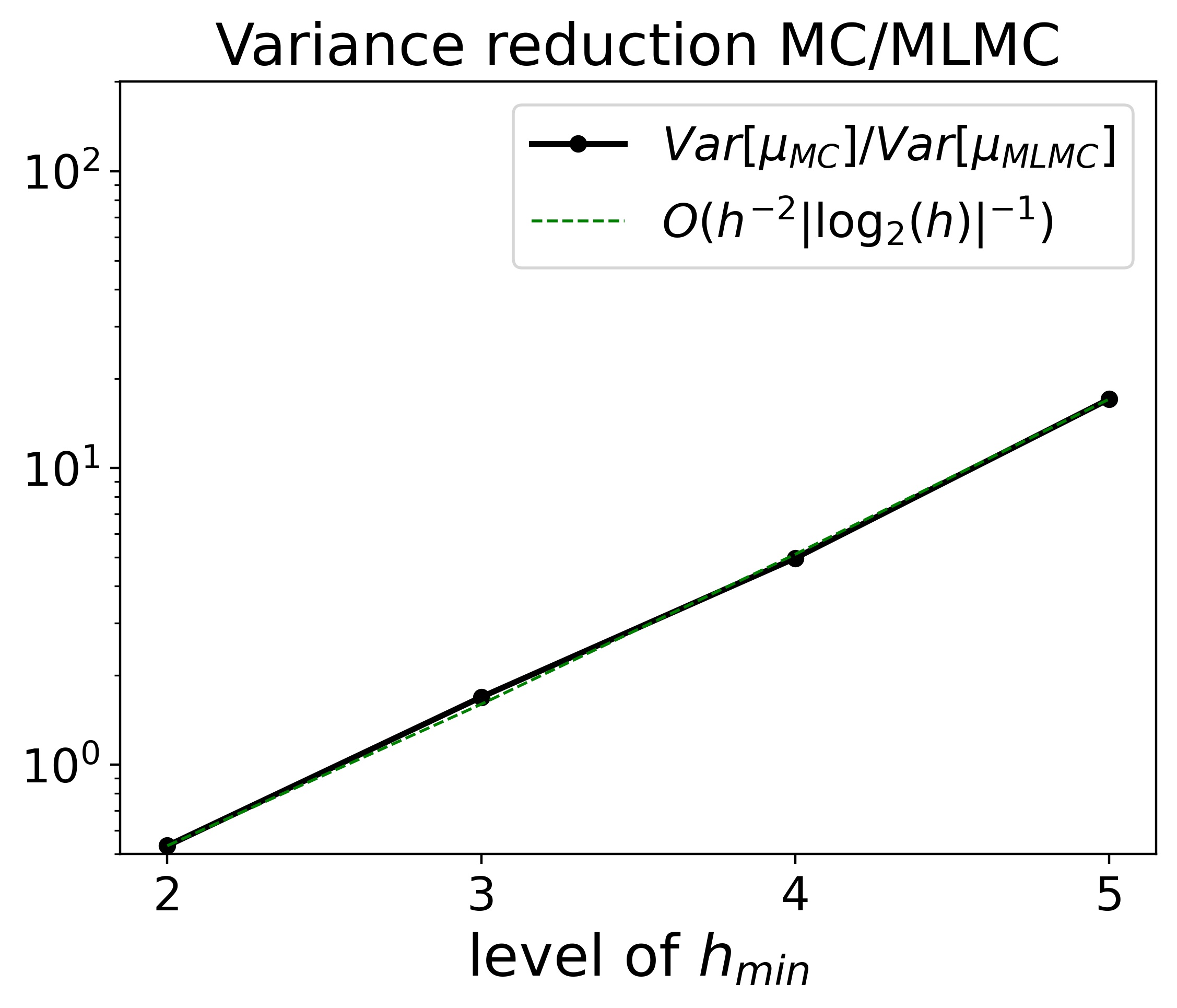}\\
\includegraphics[width=0.47\linewidth]{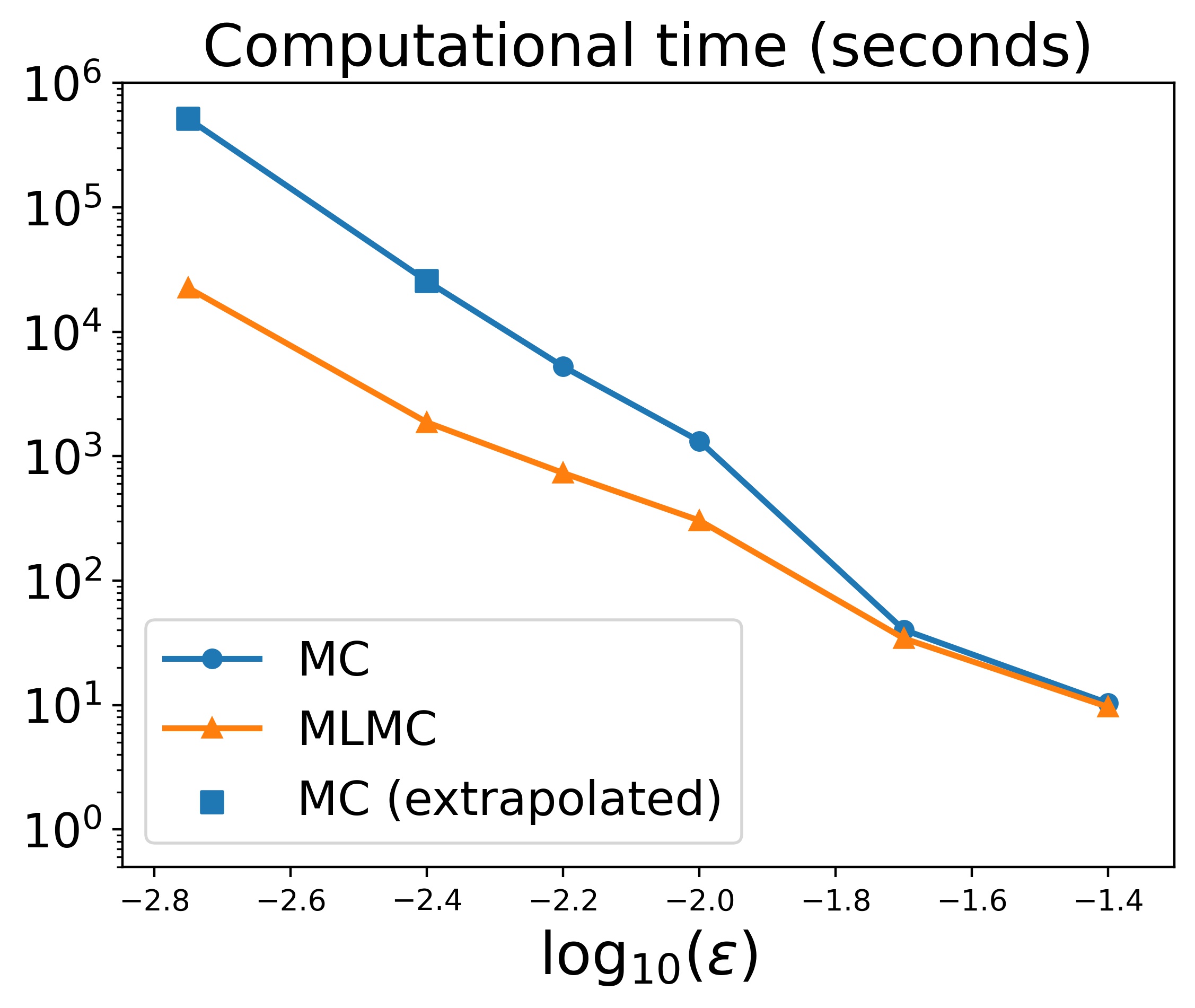}\hspace{0.7pc}
\includegraphics[width=0.49\linewidth]{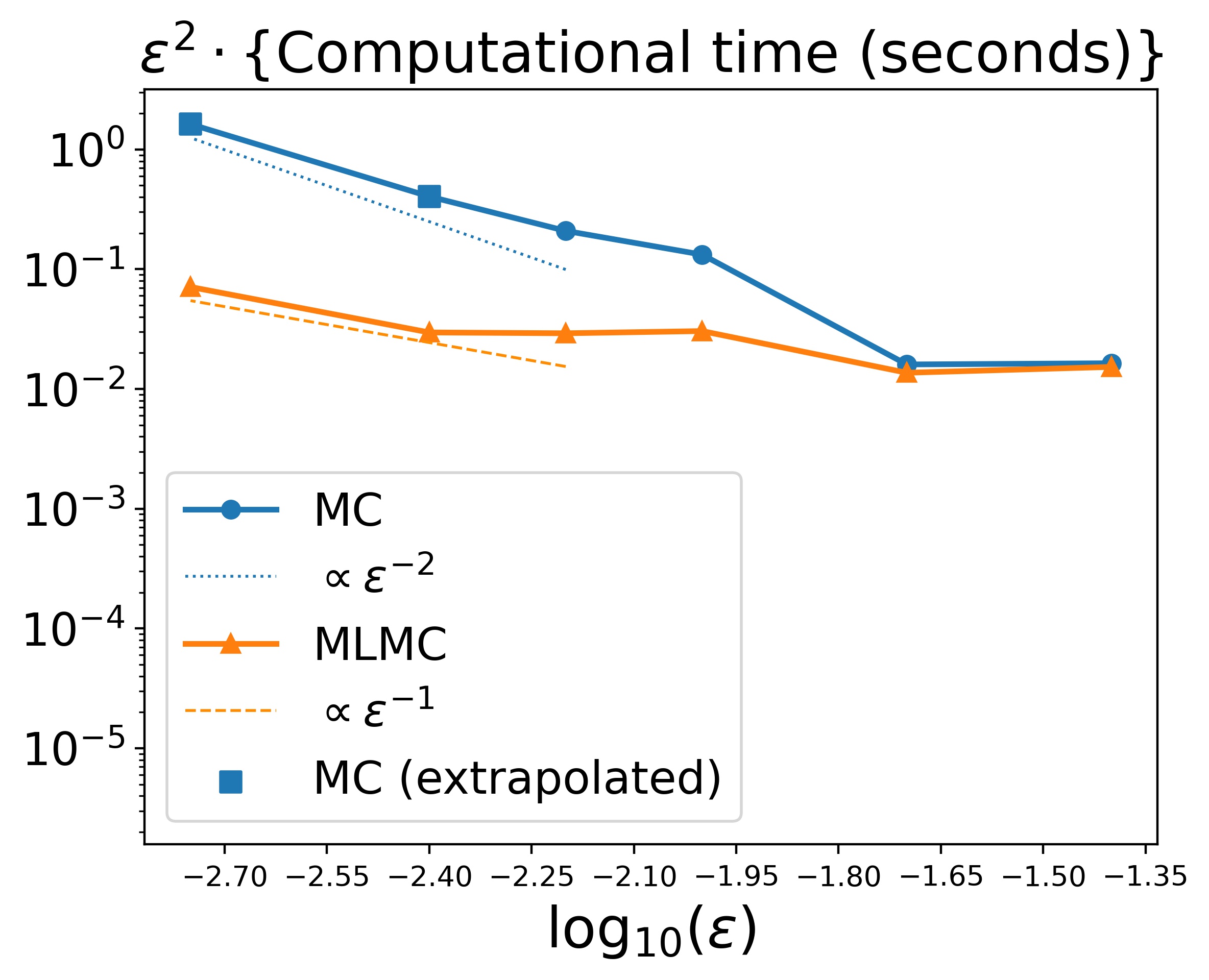}
\caption{\label{quantitive_10_9_irreg} Computational results ($N=2\cdot 10^9$, $\overline{\rho}_{0} = \overline{\rho}_{0,irreg}$).}
\end{center}
\end{figure}

%\begin{table}[H]
%     \begin{center}
%     \begin{tabular}{|p{2.5cm}||*{7}{l|}}
%          \hline
%     Accuracy $\varepsilon$  & $10^{-2.6}$ & $10^{-2.4}$ & $10^{-2.2}$ &  $10^{-2}$ & $10^{-1.7}$ & $10^{-1.4}$ & $10^{-1.1}$ \\
%     \hline
%     Level $L$ reached  & $4$ & $4$ & $4$ &  $3$ & $3$ & $3$ & $2$ \\
%      \hline
%     $\frac{\mbox{time MC}}{\mbox{time MLMC}}$ & $\dots$ & $\dots$ & $\dots$ &  $\dots$ & $\dots$ & $\dots$ & $\dots$ \\
%      \hline
%     \end{tabular}
%     \end{center}
%     \caption{Computational results ($N=2\cdot 10^9$, $\overline{\rho}_{0} = \overline{\rho}_{0,irreg}$)}
%     \label{tab_9_r}
%\end{table}

\appendix

\section{Estimates for fully discrete setting}\label{AppendixC}
In what follows, it is convenient to use the short-hand notation 
$
\cfl := \tau/h^2
$ for the standard CFL quotient.

\begin{lemma}[Discrete energy estimates]\label{Discrete_Energy_Est}
We define the test function dynamics
\begin{align}\label{BackwardsTestOneStep}
\phi_{h,\Dt}^{m\Dt} = A_h \phi_{h,\Dt}^{(m+1)\Dt},
\end{align}
with 
$
A_h := (I_d- \Dt b_0\frac{1}{2}\Delta_h)^{-1}(I_d +\Dt b_1 \frac{1}{2}\Delta_h)
$, and where $b_0,b_1$ are the coefficients from \eqref{FullyDiscreteDK}, see also Assumption \ref{ass_stability}. If $b_1>b_0$, 
the energy estimate for $\phi_{h,\Dt}$
\begin{align}\label{DiscreteEnergyEstimate}
\Dt \sum_{m}{\|\nabla_h\phi^{m\Dt}_{h,\tau}\|_h^2} \lesssim (\cfl^2 \wedge 1) \|\phi^{T}_{h,\Dt}\|^2_h
\end{align}
holds subject to $\cfl$ being sufficiently small. If $b_1\leq b_0$, then \eqref{DiscreteEnergyEstimate} holds with no smallness requirement on $\cfl$.
\end{lemma}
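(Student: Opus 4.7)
The plan is to diagonalize $A_h$ via the discrete Fourier transform on $\Grid{h}{d}$. Since $A_h$ is a rational function of $\Delta_h$, it shares eigenfunctions with $-\Delta_h$, namely the plane waves $\{e^{i\xi\cdot x}\}_\xi$ with eigenvalues $\lambda_\xi = (4/h^2)\sum_{j=1}^d\sin^2(\xi_j h/2)\in[0,4d/h^2]$ (for the standard stencil introduced in Section~\ref{num_sim}). Writing $\nu_\xi := \tau\lambda_\xi/2\in[0,2d\cfl]$, the Fourier multiplier of $A_h$ becomes $\hat A(\xi)= (1-b_1\nu_\xi)/(1+b_0\nu_\xi)$, and the backward iteration collapses mode-by-mode to $\hat\phi^{m\tau}_{h,\tau}(\xi)=\hat A(\xi)^{M-m}\hat\phi^T_{h,\tau}(\xi)$ with $M=T/\tau$.

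Next I would check stability frequency by frequency. A direct algebraic computation gives the identity $1-|\hat A(\xi)|^2=\nu_\xi(2+(b_0-b_1)\nu_\xi)/(1+b_0\nu_\xi)^2$, and hence the discrete $L^2$-contractivity $|\hat A(\xi)|\leq 1$ holds precisely when $(b_1-b_0)\nu_\xi\leq 2$. When $b_1\leq b_0$ this is automatic for every $\nu_\xi\geq 0$ (matching the unconditional case of the lemma), whereas when $b_1>b_0$ it is enough to enforce $\cfl\leq 1/(d(b_1-b_0))$ via $\nu_\xi\leq 2d\cfl$, which pins down the "sufficiently small $\cfl$" requirement and is compatible with the discrete maximum principle stipulated in Assumption~\ref{ass_stability}.

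Combining discrete integration by parts $\|\nabla_h\phi^{m\tau}_{h,\tau}\|_h^2 = \sum_\xi\lambda_\xi|\hat\phi^{m\tau}_{h,\tau}(\xi)|^2$ with Parseval, I would then rewrite the left-hand side of \eqref{DiscreteEnergyEstimate} as
\begin{align*}
\tau\sum_{m=0}^{M}\|\nabla_h\phi^{m\tau}_{h,\tau}\|_h^2 = \sum_\xi|\hat\phi^T_{h,\tau}(\xi)|^2\cdot\Lambda(\xi),\qquad \Lambda(\xi):=\lambda_\xi\tau\sum_{k=0}^{M}|\hat A(\xi)|^{2k}.
\end{align*}
Under stability I estimate the geometric sum by $\sum_{k=0}^{M}|\hat A(\xi)|^{2k}\leq 1/(1-|\hat A(\xi)|^2)$, or alternatively by $M+1\lesssim T/\tau$ when $|\hat A(\xi)|$ is close to $1$. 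Substituting the explicit form of $1-|\hat A(\xi)|^2$ yields the pointwise bound $\Lambda(\xi)\leq 2(1+b_0\nu_\xi)^2/(2+(b_0-b_1)\nu_\xi)$. Taking the supremum over $\xi$ and feeding it into Parseval on the right-hand side, together with $\sum_\xi|\hat\phi^T_{h,\tau}(\xi)|^2=\|\phi^T_{h,\tau}\|_h^2$, concludes the argument.

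The main obstacle is extracting exactly the factor $\cfl^2\wedge 1$ claimed in \eqref{DiscreteEnergyEstimate}. The crude bound $\Lambda(\xi)\lesssim 1$ on low modes $\nu_\xi\lesssim 1$ together with $\Lambda(\xi)\lesssim\nu_\xi\lesssim\cfl$ on the highest modes (the latter being relevant only in the unconditional regime $b_1\leq b_0$) gives the envelope $1\vee\cfl$; improving this to $\cfl^2\wedge 1$ requires more delicate bookkeeping, combining the truncated geometric sum $\tau(M+1)\approx T$ in the nearly-unitary frequency regime with the reciprocal bound $\tau/(1-|\hat A(\xi)|^2)$ and the explicit relation $\nu_\xi=\tau\lambda_\xi/2$. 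Splitting the frequency range at a threshold depending on $\cfl$ (so that one bound is tight on each side) and balancing the two sides is the delicate point, and is reserved for the detailed proof.
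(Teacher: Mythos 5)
Your argument is correct in substance but follows a genuinely different route from the paper. The paper works entirely in physical space: it multiplies \eqref{BackwardsTestOneStep} by $(I_d-\Dt\frac{b_0}{2}\Delta_h)$, takes $\|\cdot\|_h^2$ of both resulting sides, uses the summation-by-parts identity $(f_h,\Delta_h f_h)_h=-\|\nabla_h f_h\|_h^2$, and sums over $m$, handling the leftover $\Dt^2\|\Delta_h\phi\|_h^2$ terms either by telescoping (when $b_1\le b_0$) or by absorbing them into the gradient terms via an inverse inequality (when $b_1>b_0$, which is exactly where the smallness requirement on $\cfl$ enters). Your Fourier diagonalization replaces this with an explicit symbol computation; your identity $1-|\hat A(\xi)|^2=\nu_\xi(2+(b_0-b_1)\nu_\xi)/(1+b_0\nu_\xi)^2$ is the frequency-wise version of the paper's energy identity, and your stability threshold $(b_1-b_0)\nu_\xi\le 2$ reproduces the same dichotomy in $b_0,b_1$. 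What your approach buys is sharper, mode-by-mode information (an exact contractivity criterion and an explicit bound on $\Lambda(\xi)$); what it costs is the extra requirement that $\Delta_h$ be diagonalized by plane waves and be compatible with $\nabla_h$ in the sense $\|\nabla_h f\|_h^2=\sum_\xi\lambda_\xi|\hat f(\xi)|^2$ --- but this is the same compatibility the paper assumes, so it is not a real restriction.

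The one place where you stall --- ``extracting exactly the factor $\cfl^2\wedge 1$'' --- is not a gap in your argument but a typo in the statement: the factor should read $\cfl^2\vee 1$ (equivalently $1+\cfl^2$). This is what the paper's own proof produces (its final step is $b_1^2\Dt^2\|\Delta_h\phi^T_{h,\Dt}\|_h^2\lesssim\cfl^2\|\phi^T_{h,\Dt}\|_h^2$, added on top of $\|\phi^T_{h,\Dt}\|_h^2$), and it is the form in which the lemma is invoked later: the proof of Lemma~\ref{lma_ErrToMFL} uses the factor $(\cfl^\ths\vee 1)$. The literal $\cfl^2\wedge 1$ cannot hold: fixing $h$ and letting $\cfl\to 0$, the left-hand side of \eqref{DiscreteEnergyEstimate} converges to the positive semi-discrete energy $\int_0^T\|\nabla_h\phi^t_h\|_h^2\,\mathrm{d}t$ while the right-hand side tends to $0$. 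With the corrected factor, your pointwise bound $\Lambda(\xi)\le 2(1+b_0\nu_\xi)^2/(2+(b_0-b_1)\nu_\xi)\lesssim 1+\nu_\xi^2\lesssim 1\vee\cfl^2$ (the denominator being $\ge 2$ when $b_1\le b_0$, and $\ge 1$ under your CFL restriction when $b_1>b_0$) already closes the proof; no further frequency splitting or bookkeeping is required.
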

\begin{proof}
We multiply both sides in \eqref{BackwardsTestOneStep} by $(I_d-\Dt \frac{b_0}{2}\Delta_h)$, compute the $\|\cdot\|^2_h$ norm of both resulting sides, reshuffle the terms, use the integration-by-parts formula $(f_h,\Delta_h f_h)_h = -\|\nabla_h f_h\|^2$, $\forall f_h \in L^2(\Ghd)$, and obtain
\begin{align*}
& b_0\Dt \|\nabla_h \phi_{h,\Dt}^{m\Dt}\|^2_h + b_1\Dt \|\nabla_h \phi_{h,\Dt}^{(m+1)\Dt}\|^2_h \nonumber \\
& \quad = \|\phi_{h,\Dt}^{(m+1)\Dt}\|^2_h - \|\phi_{h,\Dt}^{m\Dt}\|^2_h + \Dt^2\frac{b_1^2}{4}\|\Delta_h \phi_{h,\Dt}^{(m+1)\Dt}\|^2_h - \Dt^2\frac{b_0^2}{4}\|\Delta_h \phi_{h,\Dt}^{m\Dt}\|^2_h.
\end{align*}
Summing over $m$, we control the sum of the contributions $\Dt^2(b_1^2/4)\|\Delta_h \phi_{h,\Dt}^{(m+1)\Dt}\|^2_h - \Dt^2(b_0^2/4)\|\Delta_h \phi_{h,\Dt}^{m\Dt}\|^2_h$ by using a telescopic argument if $b_1\leq b_0$, or by absorbing the term $\Dt^2(b_1^2/4)\|\Delta_h \phi_{h,\Dt}^{(m+1)\Dt}\|^2_h$ in the term $b_1\Dt \|\nabla_h \phi_{h,\Dt}^{(m+1)\Dt}\|^2_h$ (subject to $\cfl$ being sufficiently small) if instead $b_1> b_0$. 
In all cases, \eqref{DiscreteEnergyEstimate} follows by additionally using that 
$
b_1^2\Dt^2\|\Delta_h \phi_{h,\Dt}^{T}\|_h \lesssim \cfl^2 \|\phi_{h,\Dt}^{T}\|_h^2.
$
\end{proof}
\begin{remark}\label{possible_suboptimal_discrete_energy_estimate}
As this is not crucial in this paper, we do seek to obtain the optimal dependency on $\cfl$ in Lemma \ref{Discrete_Energy_Est}.
\end{remark}

\begin{lemma}[Error estimates for standard heat equation, cfr. \cite{morton2005numerical}]\label{Lemma_ErrorDiscrTestFunc} Let $\phi$ satisfy the continuous backwards heat equation ending in $\phi^T = \varphi \in C^2$, and let $\phi_{h,\Dt}$ satisfy the discrete backwards heat equation \eqref{BackwardsTestOneStep} ending in $\Ih \varphi$. Then, for all $m$, we have 
\begin{align}\label{ErrorDiscrTestFunc}
\|\phi(\cdot,m\tau) - \phi^{m\Dt}_{h,\Dt}\|_\infty \lesssim h^2 + \Dt. 
\end{align}
\end{lemma}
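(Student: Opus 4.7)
\textbf{Proof plan for Lemma \ref{Lemma_ErrorDiscrTestFunc}.} The plan is to follow the classical consistency-plus-stability framework for finite-difference approximations to the heat equation. I define the error
\begin{equation*}
e^m := \Ih \phi(\cdot, m\tau) - \phi^{m\Dt}_{h,\Dt},
\qquad m = 0, 1, \dots, L,
\end{equation*}
where $L\tau = T$. By construction $e^L = 0$. Plugging $\Ih \phi$ into the one-step identity \eqref{BackwardsTestOneStep} and using the continuous backward heat equation $\partial_t \phi = -\tfrac{1}{2}\Delta\phi$ to rewrite $\phi(\cdot, m\tau)$ in terms of $\phi(\cdot, (m+1)\tau)$, I obtain the error recursion
\begin{equation*}
e^m = A_h e^{m+1} + T^m,
\end{equation*}
where $T^m$ is the local truncation error of the $\theta$-scheme defined by $b_0, b_1$ (with $b_0 + b_1 = 1$) applied to $\phi$.

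Next, I would estimate $T^m$ by Taylor expansion, splitting it into a spatial part $\tau (\Delta - \Delta_h)\phi$ and a temporal part coming from the single-step Taylor remainder. The second-order finite-difference operator $\Delta_h$ in Assumption \ref{ass_stability} satisfies $(\Delta - \Delta_h) f = O(h^2 \|f\|_{C^4})$ by standard Taylor expansion, and the $\theta$-scheme has temporal truncation $O(\tau^2 \|\partial_t^2 \phi\|_\infty)$. Since $\phi$ solves a (time-reversed) heat equation with terminal data $\varphi$, parabolic smoothing plus energy estimates yield bounds on $\|\phi(\cdot,t)\|_{C^4}$ and $\|\partial_t^2\phi(\cdot,t)\|_\infty$ that are controlled by $\|\varphi\|_{C^2}$ together with integrable singularities as $t \to T^-$; summing the per-step truncation over $m$ still yields
\begin{equation*}
\sum_{m} \|T^m\|_\infty \lesssim h^2 + \tau.
\end{equation*}

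Finally, I would invoke stability to propagate the truncation to a global error bound. By Assumption \ref{ass_stability}, the discrete Mean Field Limit scheme satisfies the discrete maximum principle, which is exactly the statement that $A_h$ is a non-negative operator with $\|A_h\|_{\infty \to \infty} \leq 1$. Iterating the recursion backwards from $e^L = 0$ gives
\begin{equation*}
\|e^m\|_\infty \;\leq\; \sum_{k=m}^{L-1} \|A_h\|_{\infty\to\infty}^{k-m}\,\|T^k\|_\infty \;\leq\; \sum_{k=0}^{L-1} \|T^k\|_\infty \;\lesssim\; h^2 + \tau,
\end{equation*}
which is \eqref{ErrorDiscrTestFunc}.

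\textbf{Main obstacle.} The statement only requires $\varphi \in C^2$, which is borderline for a genuinely second-order-in-space truncation analysis: the pointwise bound $\|\phi(\cdot,t)\|_{C^4}\lesssim 1$ need not hold uniformly up to $t = T$. The cleanest way around this is to use parabolic smoothing estimates that give $\|\phi(\cdot,t)\|_{C^4} \lesssim (T-t)^{-1}\|\varphi\|_{C^2}$ (and analogous bounds for $\partial_t^2\phi$), then split the sum over $m$ into the finitely many terms near $t = T$ (handled directly by the $C^2$ bound on $\varphi$ and stability of $A_h$) and the bulk terms (to which the standard consistency argument applies); the $\tau$-weighted sum of the singular factor stays $O(\log(T/\tau))$ or even $O(1)$, and is absorbed in the implicit constant. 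All other steps are standard and essentially reproduce the $\theta$-scheme analysis in \cite{morton2005numerical}.
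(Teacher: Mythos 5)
The paper does not actually supply a proof of this lemma: it is stated with a pointer to the textbook \cite{morton2005numerical}, and your consistency-plus-stability scheme --- the error recursion $e^m = A_h e^{m+1} + T^m$, second-order spatial truncation for $\Delta_h$, the $\theta$-scheme temporal truncation, and propagation via the discrete maximum principle of Assumption \ref{ass_stability} (i.e.\ $\|A_h\|_{\infty\to\infty}\le 1$) --- is precisely the standard argument that reference supplies. In approach you are therefore aligned with what the paper intends.

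One step does not work as written, and it is exactly the one you flag as the main obstacle. At the borderline regularity $\varphi\in C^2$, the smoothing bound $\|\phi(\cdot,t)\|_{C^4}\lesssim (T-t)^{-1}\|\varphi\|_{C^2}$ gives $\sum_m \tau\, h^2\,(T-m\tau)^{-1}\approx h^2\log(T/\tau)$, and a factor $\log(T/\tau)$ cannot be ``absorbed in the implicit constant'': the constant in \eqref{ErrorDiscrTestFunc} must be independent of $h$ and $\tau$, and under the CFL coupling $\tau\propto h^2$ your argument delivers $h^2|\log h|+\tau|\log\tau|$ rather than $h^2+\tau$. (The temporal part has the same problem, since $\partial_t^2\phi=\tfrac14\Delta^2\phi$ also costs four spatial derivatives.) The loss disappears with only slightly more regularity --- e.g.\ $\varphi\in C^{2,\alpha}$ for some $\alpha>0$, or $\varphi\in C^3$, in which case the smoothing weight is integrable and the weighted sum is $O(1)$ --- and everywhere the lemma is actually invoked the paper assumes at least $\varphi\in C^3$ (Assumption \ref{reg_test_func}), so the lemma as used is unaffected. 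But at exactly $C^2$ your proof establishes the estimate only up to a logarithm; you should either record that loss or run the argument under the stronger hypothesis.
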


\begin{lemma} For any discrete function $\phi_{h} \in H^2_{h}$, we have 
\begin{align}\label{Matrix_error}
\left\| |\nabla_h \left[ (I_d - b_0\Dt \Delta_h)^{-1}\phi_h - \phi_h \right] \right\|_h^2 \lesssim b_0^2 \Dt^2 \|\phi_h\|^2_{H^2_{h}},
\end{align}
where $H^2_h$ is the discrete Sobolev space of order two.
\end{lemma}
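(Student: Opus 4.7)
The plan is to exploit that on the periodic lattice $\Ghd$, all operators in play -- $\nabla_h$, $\Delta_h$, and the resolvent $R := (I_d - b_0\Dt\Delta_h)^{-1}$ -- are Fourier multipliers with respect to the same discrete Fourier basis and therefore commute pairwise. Moreover $R$ is a contraction on $L^2_h$, since each of its eigenvalues is of the form $(1+b_0\Dt\lambda_\xi)^{-1} \in (0,1]$, where $\lambda_\xi\ge 0$ are the eigenvalues of $-\Delta_h$.

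First I would rewrite the error via the operator identity
$$R - I_d = R\,(I_d - (I_d - b_0\Dt\Delta_h)) = b_0\Dt\,R\,\Delta_h,$$
which gives the closed form $u := R\phi_h - \phi_h = b_0\Dt\,R\,\Delta_h\phi_h$. Applying $\nabla_h$ and commuting it past $R$, I would obtain $\nabla_h u = b_0\Dt\,R\,(\nabla_h \Delta_h\phi_h)$, and the contraction estimate $\|R g_h\|_h \le \|g_h\|_h$ immediately yields
$$\|\nabla_h u\|_h^2 \le b_0^2\Dt^2\,\|\nabla_h\Delta_h\phi_h\|_h^2,$$
from which \eqref{Matrix_error} follows once the right-hand side is absorbed into $\|\phi_h\|_{H^2_h}^2$ under the paper's convention for the discrete $H^2_h$ norm.

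As a sanity check and alternative route, I would verify the same estimate by an energy argument: the identity $u - b_0\Dt\Delta_h u = b_0\Dt\Delta_h\phi_h$ tested against $-\Delta_h u$, combined with the discrete integration-by-parts formula $(f_h,\Delta_h g_h)_h = -(\nabla_h f_h, \nabla_h g_h)_h$ and Young's inequality on the right-hand side, gives $\|\nabla_h u\|_h^2 + \tfrac{b_0\Dt}{2}\|\Delta_h u\|_h^2 \le \tfrac{b_0\Dt}{2}\|\Delta_h\phi_h\|_h^2$. The two routes together also deliver the pointwise-in-$\xi$ spectral bound $(b_0\Dt\lambda_\xi)/(1+b_0\Dt\lambda_\xi) \le \min(b_0\Dt\lambda_\xi,1)$, which can be used to interpolate.

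The main obstacle is really not analytical but a bookkeeping one: deciding which norm on the right-hand side is actually being claimed. The operator-theoretic route naturally pairs the $b_0^2\Dt^2$ prefactor with a quantity at the level of $\|\nabla_h\Delta_h\phi_h\|_h^2$, while the energy route pairs a weaker $b_0\Dt$ prefactor with the genuinely $H^2_h$-level quantity $\|\Delta_h\phi_h\|_h^2$. Since \eqref{Matrix_error} is only invoked once, in Step 3 of the proof of Proposition \ref{PropBoundVarLevels} to control a term that is already of order $\Dt^2$ after time-integration, either version of the estimate is sufficient downstream, so the cleanest writeup is the three-line commutator argument above followed by an explicit note identifying the discrete $H^2_h$-norm convention.
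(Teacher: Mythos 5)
Your main route coincides with the paper's own proof: the identity $R-I_d=b_0\Dt\,R\,\Delta_h$ for $R=(I_d-b_0\Dt\Delta_h)^{-1}$, followed by commuting $\nabla_h$ past $R$ and using that $R$ is an $L^2_h$-contraction, is precisely the paper's one-line Fourier-multiplier computation written in operator form, so the lemma is proved the same way. Your two additional observations are both correct and worth keeping. First, the honest output of this computation is $b_0^2\Dt^2\|\nabla_h\Delta_h\phi_h\|_h^2$, an $H^3_h$-level quantity, and the paper's own proof glosses over exactly the same mismatch with the stated right-hand side; indeed a short spectral optimization (the ratio $\lambda/(1+b_0\Dt\lambda)^2$ is maximized at $\lambda=1/(b_0\Dt)$ with value $1/(4b_0\Dt)$) shows one cannot simultaneously have the $\Dt^2$ prefactor and only two discrete derivatives on the right, so the literal statement should be read either with $\|\nabla_h\Delta_h\phi_h\|_h^2$ on the right or with the weaker prefactor $b_0\Dt$ in front of $\|\Delta_h\phi_h\|_h^2$. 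Second, your energy argument is a genuinely different, Fourier-free derivation of exactly that second variant, $\|\nabla_h u\|_h^2+\tfrac{b_0\Dt}{2}\|\Delta_h u\|_h^2\le\tfrac{b_0\Dt}{2}\|\Delta_h\phi_h\|_h^2$, and it is correct. One caveat on your closing remark: in Step~3 of the proof of Proposition~\ref{PropBoundVarLevels} the two variants are not quite interchangeable in terms of rates, since an $O(b_0\Dt_\lev)$ bound would feed an $O(\Dt_\lev)\propto\hl^2$ term into $Err_{num}$ for the Fourier coupling instead of $O(\Dt_\lev^2)\propto\hl^4$; the discrepancy is immaterial in practice only because the lemma is applied to the discrete heat flow of a $C^{R+4}$ test function, for which $\|\nabla_h\Delta_h\phi_h\|_h$ is bounded and the $H^2_h$ versus $H^3_h$ distinction carries no cost.
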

\begin{proof}
This follows by taking the Fourier transform of $y_h := (I_d - b_0\Dt \Delta_h)^{-1}\phi_h$, which gives 
$
\hat{y}_h(\xi) - \hat{\phi}_h(\xi) \propto (\Dt P(\xi)\hat{\phi}_h(\xi))/(1+\Dt P(\xi))
$
for some $P(\xi)\propto |\xi|^2$.
\end{proof}

\begin{lemma}[Martingale property for discrete fluctuations]\label{lem_Discrete_Martingale}
Let \eqref{BackwardsTestOneStep} be the dynamics for the test function $\phi_{h,\Dt}$.
Let $\rho_{h,\Dt}$ be the solution to \eqref{FullyDiscreteDK}. 
Then $(\rho_{h,\Dt},\phi_{h,\Dt})_h$ is a discrete time martingale.
\end{lemma}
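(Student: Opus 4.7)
The plan is to compute the one-step increment of $m\mapsto (\rho^{m\Dt}_{h,\Dt},\phi^{m\Dt}_{h,\Dt})_h$ and to verify that it is a centred martingale difference with respect to the natural filtration $(\mathcal{F}_{m\Dt})_m$ generated by the driving Brownian motions $\{\beta_{(y,r)}\}_{(y,r)}$. To this end I would first introduce the self-adjoint operators $B_+ := I_d - \Dt \tfrac{b_0}{2}\Delta_h$ and $B_- := I_d + \Dt \tfrac{b_1}{2}\Delta_h$, so that \eqref{FullyDiscreteDK} rewrites as
\begin{align*}
B_+\rho^{m\Dt}_{h,\Dt} = B_-\rho^{(m-1)\Dt}_{h,\Dt} + \mathcal{N}^{m-1},
\end{align*}
with
\begin{align*}
\mathcal{N}^{m-1} := N^{-1/2}\nabla_h\cdot\Big(\sum_{(y,r)\in(\Grid{h}{d},\{1,\dots,d\})}\sqrt{[\rho^{(m-1)\Dt}_{h,\Dt}]^+}\,e^d_{h,y,r}\,\Delta\beta^{m-1}_{(y,r)}\Big),
\end{align*}
while \eqref{BackwardsTestOneStep} reads $\phi^{(m-1)\Dt}_{h,\Dt} = B_+^{-1}B_-\phi^{m\Dt}_{h,\Dt}$.

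The key observation is that $\Delta_h$ is self-adjoint on $L^2(\Grid{h}{d})$ (being a standard second-order finite-difference Laplacian on the flat torus, cf.\ Assumption \ref{ass_stability}), and hence so are $B_+, B_-$, and $B_+^{-1}$, which moreover pairwise commute as polynomials in $\Delta_h$. Consequently
\begin{align*}
(\rho^{(m-1)\Dt}_{h,\Dt},\phi^{(m-1)\Dt}_{h,\Dt})_h
= (\rho^{(m-1)\Dt}_{h,\Dt},B_+^{-1}B_-\phi^{m\Dt}_{h,\Dt})_h
= (B_+^{-1}B_-\rho^{(m-1)\Dt}_{h,\Dt},\phi^{m\Dt}_{h,\Dt})_h,
\end{align*}
and substituting $B_+^{-1}B_-\rho^{(m-1)\Dt}_{h,\Dt} = \rho^{m\Dt}_{h,\Dt} - B_+^{-1}\mathcal{N}^{m-1}$ (which follows from the rewritten scheme above) yields the clean single-step identity
\begin{align*}
(\rho^{m\Dt}_{h,\Dt},\phi^{m\Dt}_{h,\Dt})_h - (\rho^{(m-1)\Dt}_{h,\Dt},\phi^{(m-1)\Dt}_{h,\Dt})_h = (B_+^{-1}\mathcal{N}^{m-1},\phi^{m\Dt}_{h,\Dt})_h.
\end{align*}

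To conclude, I would observe that $\phi^{m\Dt}_{h,\Dt}$ is deterministic (its backward dynamics is driven purely by the deterministic operator $A_h$ starting from the deterministic terminal datum $\mathcal{I}_h\varphi$), that $B_+^{-1}\nabla_h$ is a deterministic linear operator, and that the coefficients $\sqrt{[\rho^{(m-1)\Dt}_{h,\Dt}]^+}$ are $\mathcal{F}_{(m-1)\Dt}$-measurable, while the Brownian increments $\Delta\beta^{m-1}_{(y,r)}$ are centred and independent of $\mathcal{F}_{(m-1)\Dt}$. Therefore $\mathbb{E}\big[(B_+^{-1}\mathcal{N}^{m-1},\phi^{m\Dt}_{h,\Dt})_h \,\big|\, \mathcal{F}_{(m-1)\Dt}\big] = 0$, which is the desired discrete-time martingale property. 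I do not anticipate a serious obstacle: the only point deserving care is the self-adjointness and commutativity of $B_+$ and $B_-$, both of which are immediate from the structure of $\Delta_h$ on the periodic grid under Assumption \ref{ass_stability}.
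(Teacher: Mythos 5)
Your proof is correct and follows essentially the same route as the paper: the paper writes the scheme as $\rho_{h,\Dt}^{(m+1)\Dt} = A_h\rho_{h,\Dt}^{m\Dt} + \mathcal{M}_m$ with $A_h = B_+^{-1}B_-$ and $\mathcal{M}_m = B_+^{-1}\mathcal{N}^m$ in your notation, and uses the self-adjointness of $A_h$ to transfer it onto the test function exactly as you do. Your additional remarks on the commutativity of $B_\pm$ and the explicit conditional-expectation step merely spell out details the paper leaves implicit.
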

\begin{proof}
The definition of $A_h$ entails that 
$
\rho_{h,\Dt}^{(m+1)\Dt} = A_h \rho_{h,\Dt}^{m\Dt} + \mathcal{M}_m,
$
where $\mathcal{M}_m$ encodes the noise. 
Using the symmetry of $A_h$ and \eqref{BackwardsTestOneStep}, we end the proof by writing
\begin{align*}
(\rho_{h,\Dt}^{(m+1)\Dt}, \phi_{h,\Dt}^{(m+1)\Dt})_h & = (A_h \rho_{h,\Dt}^{m\Dt} + \mathcal{M}_m, \phi_{h,\Dt}^{(m+1)\Dt})_h \\
& = (\rho_{h,\Dt}^{m\Dt} , A_h\phi_{h,\Dt}^{(m+1)\Dt})_h + (\mathcal{M}_m, \phi_{h,\Dt}^{(m+1)\Dt})_h \\
& = (\rho_{h,\Dt}^{m\Dt} , \phi_{h,\Dt}^{m\Dt})_h + (\mathcal{M}_m, \phi_{h,\Dt}^{(m+1)\Dt})_h. \qed
\end{align*}
\end{proof}
\section{Proof of Lemma \ref{lma_ErrToMFL}}\label{proof_lma_ErrToMFL}
We need the following simple estimate.

\begin{lemma}\label{lma_ExpIntBd}
For any $\beta > 0$, there exists a constant $C>0$ such that for any $\alpha \geq 1$ we have the inequality
$
\int_{0}^{\infty}{\exp\{-\alpha x^{1/\beta}/\sqrt{1+x^{1/\beta}}\}\emph{d} x} \leq C\alpha^{-\beta}. 
$\end{lemma}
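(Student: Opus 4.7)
The plan is to reduce the integral to standard Gamma integrals via a change of variables and a piecewise bound on the exponent. First, I would substitute $u = x^{1/\beta}$ (so that $x = u^\beta$ and $\mathrm{d} x = \beta u^{\beta-1}\mathrm{d} u$), which converts the quantity of interest to
\begin{align*}
\int_0^\infty e^{-\alpha u/\sqrt{1+u}}\, \beta u^{\beta-1}\, \mathrm{d} u.
\end{align*}
This removes the awkward $\beta$-dependence inside the square root and isolates the mixed behavior into the factor $u/\sqrt{1+u}$.

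The key observation is that $u/\sqrt{1+u}$ exhibits two distinct growth regimes. For $u \leq 1$ we have $\sqrt{1+u} \leq \sqrt{2}$, hence $u/\sqrt{1+u} \geq u/\sqrt{2}$; for $u \geq 1$ we have $1+u \leq 2u$, hence $u/\sqrt{1+u} \geq \sqrt{u}/\sqrt{2}$. I would therefore split the integral at $u=1$ and use these two lower bounds on the respective pieces.

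The portion over $[0,1]$ is majorized by $\beta \int_0^\infty e^{-\alpha u/\sqrt{2}}\, u^{\beta-1}\, \mathrm{d} u = \beta \, \Gamma(\beta)\, (\sqrt{2}/\alpha)^\beta$, giving a contribution $\lesssim \alpha^{-\beta}$. For the portion over $[1,\infty)$, the further substitution $v = \sqrt{u}$ (so $\mathrm{d} u = 2v\,\mathrm{d} v$) converts the bound to $2\beta \int_1^\infty e^{-\alpha v/\sqrt{2}}\, v^{2\beta-1}\, \mathrm{d} v \leq 2\beta\, \Gamma(2\beta)\, (\sqrt{2}/\alpha)^{2\beta}$, i.e., a contribution $\lesssim \alpha^{-2\beta}$. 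Since $\alpha \geq 1$ implies $\alpha^{-2\beta} \leq \alpha^{-\beta}$, both pieces fit into the claimed bound $C \alpha^{-\beta}$, and the constant $C$ depends only on $\beta$.

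No serious obstacle is anticipated: the argument is essentially an exercise in the Gamma-integral asymptotics. The only mild subtlety is recognizing the two-regime behavior of $u/\sqrt{1+u}$ (linear near $0$, square-root-like at $\infty$) and handling each regime with the sharpest of the two simple lower bounds, so that the hypothesis $\alpha \geq 1$ is genuinely used to discard the smaller power $\alpha^{-2\beta}$ coming from the tail.
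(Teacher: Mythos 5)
Your proof is correct; every step checks out, and the constant indeed depends only on $\beta$. It is worth noting that your route differs from the paper's in the choice of decomposition: the paper splits the original integral at the $\alpha$-dependent point $x=\alpha^{-\beta}$, bounds the head trivially by the length of that interval (the integrand is at most $1$), and on the tail uses the inequality $1+x^{1/\beta}\leq x^{1/\beta}(1+\alpha)$ together with $\alpha/\sqrt{1+\alpha}\geq\sqrt{\alpha}/\sqrt{2}$ to reduce to a Gaussian-moment integral after the substitution $y=4\beta x^{1/(4\beta)}$, which again yields $\alpha^{-\beta}$. Your version — substituting $u=x^{1/\beta}$ first, splitting at the fixed point $u=1$, and invoking the two elementary bounds $u/\sqrt{1+u}\geq u/\sqrt{2}$ on $[0,1]$ and $u/\sqrt{1+u}\geq\sqrt{u/2}$ on $[1,\infty)$ — reduces both pieces to explicit Gamma integrals, avoids the Gaussian-moment step, and in fact produces the sharper rate $\alpha^{-2\beta}$ on the tail (which you correctly discard using $\alpha\geq 1$, the same place the paper uses that hypothesis). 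The two arguments exploit the identical two-regime structure of the exponent, so the difference is one of bookkeeping rather than substance, but your execution is arguably the more elementary and self-contained of the two.
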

\begin{proof}
Using the inequality $1+x^{1/\beta} \leq x^{1/\beta}(1+\alpha)$ (valid for $x\geq \alpha^{-\beta}$)  
we write
\begin{align}
& \int_{0}^{\alpha^{-\beta}}{\exp\left\{-\alpha\frac{x^{1/\beta}}{\sqrt{1+x^{1/\beta}}}\right\}\m x} 
+ \int_{\alpha^{-\beta}}^{\infty}{\exp\left\{-\alpha\frac{x^{1/\beta}}{\sqrt{1+x^{1/\beta}}}\right\}\m x} \nonumber\\
& \quad \lesssim_\beta\alpha^{-\beta} + \int_{0}^{\infty}{\exp\left\{-\frac{\sqrt{\alpha} y^{2}}{\sqrt{2}(4\beta)^2} \right\}y^{4\beta-1}\m y} \lesssim_\beta\alpha^{-\beta}\nonumber,
\end{align}
where we used the bound $\alpha/\sqrt{1+\alpha}\geq \sqrt{\alpha}/\sqrt{2}$ (which is valid for $\alpha\geq 1$), the change of variables $y = 4\beta x^{1/(4\beta)}$ in the second-to-last inequality, and Gaussian moment bounds in the last inequality.
\end{proof}
\begin{proof}[Proof of Lemma \ref{lma_ErrToMFL}]
Let $x \in \Grid{h}{d}$ and $t\ \in \Tgrid[\Dt]$ be fixed. Let $\varphi_h$ be the discrete Dirac delta in $x$, namely $\delta_h := \mathbf{1}_{y = x}h^{-d}$. Moreover, let $\phi_{h,\Dt}$ be given by the discrete backwards heat equation \eqref{BackwardsTestOneStep}, and with final datum $\phi_{h,\Dt}^{t} = \delta_h$.
We define the stopping time 
$
\Ts := \min\{s\in \Tgrid[\Dt]\colon {\|\rho^s_{h,\Dt} - \ov{\rho}_{h,\Dt}^s\|_{\infty} \geq B\rho_{\min}/2}\}.
$
Let $j\in\mathbb{N}$ be even. 
The same exact reasoning as in \emph{Step 3} from the proof of Proposition \ref{PropBoundVarLevels} allows to define a continuous-time martingale $D^s$ such that $D^s = (\rho^s_{h,\Dt} - \overline{\rho}^s_{h,\Dt}, \phi^{s}_{h,\Dt})_h$ for all $\mathcal{S}_\tau \ni s\leq t$.
Using the continuous It\^o formula, the bound $\lambda_{\max}(I_d-b_0\Dt\Delta_h)^{-1}) \leq 1$, the definition of $\Ts$, and the notation $s_{\leftarrow} := \max\{m \in \Tgrid[\Dt]: m < s \}$,
we obtain
\begin{align*}
\mean{(D^{t\wedge \Ts})^j} & \leq \mean{(D^0)^j} + \frac{j^2}{N}\mean{\int_{0}^{t \wedge \Ts}(D^s)^{j-2} ([\rho_{h,\Dt}^{s_{\leftarrow}}]^+, |\nabla_h \phi^{s_{\leftarrow}}_{h,\Dt}|^2)_h \m s} \nonumber \\
& \quad \lesssim \mean{\|\rho^0_{h,\Dt} - \ov{\rho}^0_{h,\Dt}\|^j_{\infty}}\|\phi^0_{h,\Dt}\|_{L^1}^j \nonumber \\
& \quad \quad  + \frac{j^2}{N}\mean{\sup_{s \leq t\wedge \Ts}(D^{s})^{j-2}}\rho_{\max}(B+1)\!\!\!\sum_{s \in \Tgrid[\Dt], s\leq t}\!\!\!\!\!\|\nabla_h \phi^{s}_{h,\Dt}\|^2_h \Dt. 
\end{align*}
Lemma \ref{Discrete_Energy_Est} with the fact that $\|\phi_{h,\Dt}^{t}\|^2 \propto h^{-d}$, and Doob' martingale inequality imply
\begin{align*}
\mean{\sup_{s \leq t\wedge \Ts}(D^{s})^{j}} & \lesssim \mean{(D^{t\wedge \Ts})^j}  \nonumber \\
&  \leq (N^{-1}h^{-d})^\frac{j}{2} + N^{-1}j^2\mean{\sup_{s \leq t\wedge \Ts}(D^{s})^{j-2}}\rho_{\max}(B+1)(\cfl^\ths \vee 1) h^{-d}. 
\end{align*}
Using H\"older and Young inequalities with conjugate exponents $j/2, j/(j-2)$ and absorbing the term $\mean{\sup_{s \leq t\wedge \Ts}(D^{s})^{j}}$, we arrive at
\begin{align}\label{ItoPositivity_3}
\mean{\sup_{s \leq t\wedge \Ts}(D^{s})^{j}}
 \lesssim \left(Cj^2\frac{(B+1)\rho_{\max}(\cfl^{\ths} \vee 1)}{Nh^d}\right)^{j/2} 
\end{align}
%\begin{align}\label{ItoPositivity_3}
%\mean{\left(\rho^{t \wedge \Ts}_{h,\Dt} - \ov{\rho}_{h,\Dt}^{t \wedge \Ts}, \phi_{h,\Dt}^{t \wedge \Ts}\right)_h^j}
% \lesssim \left(Cj^2\frac{(B+1)\rho_{\max}(\cfl^{\ths} \vee 1)}{Nh^d}\right)^{j/2}.
%\end{align}
Using Chebyshev's inequality and \eqref{ItoPositivity_3}, we get
\begin{align}\label{Chebishev}
\mathbb{P}\left[ \sup_{s\leq t \wedge \Ts} |D^s| \geq \frac{B\rho_{\min}}{4} \right] & \lesssim \frac{\mean{\sup_{s \leq t\wedge \Ts}(D^{s})^{j}}}{(B\rho_{\min})^j} \nonumber\\
& \lesssim \left(\sqrt{C}\frac{\sqrt{B+1}}{B}\frac{\sqrt{\rho_{\max}}}{\rho_{\min}}\left\{\frac{\cfl^{\ths} \vee 1}{Nh^d}\right\}^{1/2}\right)^j \cdot j^j =: \eta^j \cdot j^j,
\end{align}
where we have set 
\begin{align}\label{define_alpha_beta}
\eta := \alpha^{-1} \cdot \frac{\sqrt{B+1}}{B}, \qquad
\alpha =: \left(\sqrt{C}\frac{\sqrt{\rho_{\max}}}{\rho_{\min}}\left\{\frac{\cfl^{\ths} \vee 1}{Nh^d}\right\}^{1/2}\right)^{-1}.
\end{align}
If $\eta \lesssim 1$, we can perform an optimisation argument over $j\geq 2$ for the right-hand-side of \eqref{Chebishev}. Thus the optimal $j$ is chosen as either $\lceil e^{-1}\eta^{-1}\rceil$ or $\lfloor e^{-1}\eta^{-1} \rfloor$. 
In either case, this optimisation procedure allows to carry on in \eqref{Chebishev} and obtain (for a different constant $C$ rescaled by $e^{-1}$)
\begin{align}\label{Optimised}
& \mathbb{P}\left[ \sup_{s\leq t \wedge \Ts} |D^s| \leq \frac{B\rho_{\min}}{4} \right] \lesssim \exp\left(-\frac{C\rho_{\min}}{\rho_{\max}^{1/2}}\!\cdot \left\{\frac{Nh^{d}}{\cfl^\ths \vee 1}\right\}^{1/2}\!\!\!\cdot\frac{B}{\sqrt{B+1}}\right).
\end{align}
Equation \eqref{Optimised} holds also in the case $\eta \gtrsim 1$, thanks to the trivial bounds
\begin{align*}
\mathbb{P}\left[ \sup_{s\leq t \wedge \Ts} |D^s| \geq \frac{B\rho_{\min}}{4} \right] \leq 1 
\lesssim 
e^{-1} \leq e^{-\eta^{-1}}.
\end{align*}
In particular, \eqref{Optimised} holds for all $B\geq 0$. Using \eqref{Optimised}, and the fact that the test function $\delta_h := \mathbf{1}_{y = x}h^{-d}$ (being a discrete Dirac delta at point $x$) recovers the value $\rho^t_{h,\Dt} - \overline{\rho}^t_{h,\Dt}$, we obtain 
%\begin{align*}
%& \mathbb{P}\left[ \sup_{s\leq t \wedge \Ts} |D^s| \geq \frac{B\rho_{\min}}{4} \right] \leq \exp\left(-\frac{C\rho_{\min}}{\rho_{\max}^{1/2}}\!\cdot \left\{\frac{Nh^{d}}{\cfl^\ths \vee 1}\right\}^{1/2}\!\!\!\cdot\frac{B}{\sqrt{B+1}}\right).
%\end{align*}
%Using the definition of $D^s$, we then obtain
\begin{align}\label{ItoPositivity_4}
& \mathbb{P}\left[ t\leq \Ts, \left|\rho^{t}_{h,\Dt} - \ov{\rho}_{h,\Dt}^{t}\right|(x) \geq \frac{B\rho_{\min}}{4} \right] \nonumber\\
& \quad \lesssim \exp\left(-\frac{C\rho_{\min}}{\rho_{\max}^{1/2}}\cdot \left\{\frac{Nh^{d}}{\cfl^\ths \vee 1}\right\}^{1/2}\cdot\frac{B}{\sqrt{B+1}}\right),\qquad \forall B\geq 0.
\end{align}
Now \eqref{eqn_ErrToMFL} follows from applying \eqref{ItoPositivity_4} over the union of all $ (t,x)\in\mathcal{S}_\tau \times G_{h,d}$.

We now move to proving \eqref{MomentBound_infty}. Assume that $\alpha \geq 1$, where $\alpha$ is defined in \eqref{define_alpha_beta}. Then the inequality \eqref{MomentBound_infty} is deduced from \eqref{eqn_ErrToMFL} using the equality $\mean{X} = \int_0^\infty \mathbb{P}(X\geq x)\m x$ (which is valid for every non-negative real-valued random variable X), a simple change of variables, and Lemma \ref{lma_ExpIntBd} with the aforementioned $\alpha$. Namely, 
\begin{align*}
\mean{\sup_{t\in \Tgrid[\Dt]} \|\rho^{t}_{h,\tau}-\ov{\rho}_{h,\tau}^t\|^j_{\infty}} & = \int_0^\infty \mathbb{P}\left(\sup_{t\in \Tgrid[\Dt]} \|\rho^{t}_{h,\tau}-\ov{\rho}_{h,\tau}^t\|^j_{\infty}\geq x\right)\m x \\
& = \int_0^\infty \mathbb{P}\left(\sup_{t\in \Tgrid[\Dt]} \|\rho^{t}_{h,\tau}-\ov{\rho}_{h,\tau}^t\|_{\infty}\geq B^{1/j}\rho_{\min}\right)\rho_{\min}^{j}\m B \\
& \lesssim \left\{C^{-1/2}(\rho_{\min}/\rho_{\max}^{1/2}) [ Nh^{d}/(\cfl^\ths \vee 1)]^{1/2}\right\}^{-j}\rho_{\min}^{j}.
\end{align*}
and \eqref{MomentBound_infty} is proven.
Adapting the proof of \eqref{MomentBound_infty} in the case $\alpha < 1$ so as to get a bound which is independent of $\rho_{\min}$ can be done by replicating the analysis leading up to \eqref{ItoPositivity_4} by replacing $\rho_{\min}$ with $\rho_{\max}$ in the definition of the stopping time $\Ts$ and all relevant thresholds (we omit the details).
Finally, \eqref{MomentBound} is easily proved along the lines of \cite[Lemma 16]{cornalba2021dean}.
\end{proof}

{\bfseries Data availability statement}.
The datasets generated and analyzed during the current study are available from the corresponding author on reasonable request.

{\bfseries Conflict of interest}.
The authors have no relevant financial or non-financial interest to disclose.

{\bfseries Acknowledgments}. The authors wish to thank the anonymous referees for their very careful reading of the manuscript and for their valuable suggestions and comments, which led to a substantially improved presentation of the paper.

The authors also wish to thank Quinn Winters for useful preliminary discussions on the subject of this paper.

\bibliographystyle{abbrvnat}
\bibliography{fluctuations_JF}

\begin{thebibliography}{25}
\providecommand{\natexlab}[1]{#1}
\providecommand{\url}[1]{\texttt{#1}}
\expandafter\ifx\csname urlstyle\endcsname\relax
  \providecommand{\doi}[1]{doi: #1}\else
  \providecommand{\doi}{doi: \begingroup \urlstyle{rm}\Url}\fi

\bibitem[Barth et~al.(2011)Barth, Schwab, and Zollinger]{BarthSchwabZollinger}
A.~Barth, C.~Schwab, and N.~Zollinger.
\newblock Multi-level {M}onte {C}arlo finite element method for elliptic {PDE}s
  with stochastic coefficients.
\newblock \emph{Numer. Math.}, 119\penalty0 (1):\penalty0 123--161, 2011.
\newblock ISSN 0029-599X,0945-3245.
\newblock \doi{10.1007/s00211-011-0377-0}.

\bibitem[Barth et~al.(2013)Barth, Lang, and Schwab]{BarthLangSchwab}
A.~Barth, A.~Lang, and C.~Schwab.
\newblock Multilevel {M}onte {C}arlo method for parabolic stochastic partial
  differential equations.
\newblock \emph{BIT}, 53\penalty0 (1):\penalty0 3--27, 2013.
\newblock ISSN 0006-3835,1572-9125.
\newblock \doi{10.1007/s10543-012-0401-5}.

\bibitem[Brandt et~al.(1994)Brandt, Galun, and Ron]{brandt1994optimal}
A.~Brandt, M.~Galun, and D.~Ron.
\newblock Optimal multigrid algorithms for calculating thermodynamic limits.
\newblock \emph{Journal of Statistical Physics}, 74:\penalty0 313--348, 1994.
\newblock \doi{10.1007/BF02186816}.

\bibitem[Charrier et~al.(2013)Charrier, Scheichl, and
  Teckentrup]{CharrierScheichlTeckentrup}
J.~Charrier, R.~Scheichl, and A.~L. Teckentrup.
\newblock Finite element error analysis of elliptic {PDE}s with random
  coefficients and its application to multilevel {M}onte {C}arlo methods.
\newblock \emph{SIAM J. Numer. Anal.}, 51\penalty0 (1):\penalty0 322--352,
  2013.
\newblock ISSN 0036-1429,1095-7170.
\newblock \doi{10.1137/110853054}.

\bibitem[Chung and Lu(2006)]{chung2006concentration}
F.~Chung and L.~Lu.
\newblock Concentration inequalities and martingale inequalities: a survey.
\newblock \emph{Internet mathematics}, 3\penalty0 (1):\penalty0 79--127, 2006.

\bibitem[Cliffe et~al.(2011)Cliffe, Giles, Scheichl, and
  Teckentrup]{CliffeGilesScheichlTeckentrup}
K.~A. Cliffe, M.~B. Giles, R.~Scheichl, and A.~L. Teckentrup.
\newblock Multilevel {M}onte {C}arlo methods and applications to elliptic
  {PDE}s with random coefficients.
\newblock \emph{Comput. Vis. Sci.}, 14\penalty0 (1):\penalty0 3--15, 2011.
\newblock ISSN 1432-9360,1433-0369.
\newblock \doi{10.1007/s00791-011-0160-x}.

\bibitem[Cornalba and Fischer(2023)]{cornalba2021dean}
F.~Cornalba and J.~Fischer.
\newblock The {D}ean--{K}awasaki equation and the structure of density
  fluctuations in systems of diffusing particles.
\newblock \emph{Arch. Rational Mech. Anal.}, 247\penalty0 (76), 2023.
\newblock \doi{10.1007/s00205-023-01903-7}.

\bibitem[Cornalba et~al.(2023)Cornalba, Fischer, Ingmanns, and
  Raithel]{CornalbaFischerIngmannsRaithel}
F.~Cornalba, J.~Fischer, J.~Ingmanns, and C.~Raithel.
\newblock Density fluctuations in weakly interacting particle systems via the
  {D}ean-{K}awasaki equation.
\newblock \emph{Preprint}, 2023.
\newblock URL \url{https://arxiv.org/abs/2303.00429}.

\bibitem[Croci et~al.(2018)Croci, Giles, Rognes, and
  Farrell]{doi:10.1137/18M1175239}
M.~Croci, M.~B. Giles, M.~E. Rognes, and P.~E. Farrell.
\newblock Efficient white noise sampling and coupling for multilevel monte
  carlo with nonnested meshes.
\newblock \emph{SIAM/ASA Journal on Uncertainty Quantification}, 6\penalty0
  (4):\penalty0 1630--1655, 2018.
\newblock \doi{10.1137/18M1175239}.

\bibitem[Dirr et~al.(2020)Dirr, Fehrman, and Gess]{DirrFehrmanGess}
N.~Dirr, B.~Fehrman, and B.~Gess.
\newblock Conservative stochastic {PDE} and fluctuations of the symmetric
  simple exclusion process.
\newblock \emph{Preprint}, 2020.
\newblock URL \url{https://arxiv.org/abs/2012.02126}.

\bibitem[Djurdjevac et~al.(2024)Djurdjevac, Kremp, and
  Perkowski]{djurdjevac2024weak}
A.~Djurdjevac, H.~Kremp, and N.~Perkowski.
\newblock Weak error analysis for a nonlinear {SPDE} approximation of the
  {D}ean--{K}awasaki equation.
\newblock \emph{Stoch PDE: Anal. Comp.}, pages 1--26, 2024.
\newblock URL \url{https://doi.org/10.1007/s40072-024-00324-1}.

\bibitem[Fehrman and Gess(2019)]{FehrmanGess}
B.~Fehrman and B.~Gess.
\newblock Large deviations for conservative stochastic {PDE} and
  non-equilibrium fluctuations.
\newblock \emph{Preprint}, 2019.
\newblock URL \url{https://arxiv.org/abs/1910.11860}.

\bibitem[Giles(2008)]{Giles}
M.~B. Giles.
\newblock Multilevel {M}onte {C}arlo path simulation.
\newblock \emph{Oper. Res.}, 56\penalty0 (3):\penalty0 607--617, 2008.
\newblock ISSN 0030-364X,1526-5463.
\newblock \doi{10.1287/opre.1070.0496}.

\bibitem[Giles(2015)]{GilesReview}
M.~B. Giles.
\newblock Multilevel {M}onte {C}arlo methods.
\newblock \emph{Acta Numer.}, 24:\penalty0 259--328, 2015.
\newblock ISSN 0962-4929,1474-0508.
\newblock \doi{10.1017/S096249291500001X}.

\bibitem[Giles and Reisinger(2012)]{GilesReisinger}
M.~B. Giles and C.~Reisinger.
\newblock Stochastic finite differences and multilevel {M}onte {C}arlo for a
  class of {SPDE}s in finance.
\newblock \emph{SIAM J. Financial Math.}, 3\penalty0 (1):\penalty0 572--592,
  2012.
\newblock ISSN 1945-497X.
\newblock \doi{10.1137/110841916}.

\bibitem[Gubinelli et~al.(2015)Gubinelli, Imkeller, and
  Perkowski]{GubinelliImkellerPerkowskiParacontrolled}
M.~Gubinelli, P.~Imkeller, and N.~Perkowski.
\newblock Paracontrolled distributions and singular {PDE}s.
\newblock \emph{Forum Math. Pi}, 3:\penalty0 e6, 75, 2015.
\newblock ISSN 2050-5086.
\newblock \doi{10.1017/fmp.2015.2}.

\bibitem[Hairer(2014)]{HairerRegularityStructures}
M.~Hairer.
\newblock A theory of regularity structures.
\newblock \emph{Invent. Math.}, 198\penalty0 (2):\penalty0 269--504, 2014.
\newblock ISSN 0020-9910,1432-1297.
\newblock \doi{10.1007/s00222-014-0505-4}.

\bibitem[Khodadadian et~al.(2019)Khodadadian, Parvizi, Abbaszadeh, Dehghan, and
  Heitzinger]{khodadadian2019multilevel}
A.~Khodadadian, M.~Parvizi, M.~Abbaszadeh, M.~Dehghan, and C.~Heitzinger.
\newblock A multilevel {M}onte {C}arlo finite element method for the stochastic
  {C}ahn--{H}illiard--{C}ook equation.
\newblock \emph{Computational Mechanics}, 64:\penalty0 937--949, 2019.
\newblock \doi{10.1007/s00466-019-01688-1}.

\bibitem[Konarovskyi et~al.(2019)Konarovskyi, Lehmann, and von
  Renesse]{KonarovskyiVonRenesse}
V.~Konarovskyi, T.~Lehmann, and M.-K. von Renesse.
\newblock {D}ean--{K}awasaki dynamics: ill-posedness vs. triviality.
\newblock \emph{Electron. Commun. Probab.}, 24:\penalty0 Paper No. 8, 2019.
\newblock \doi{10.1214/19-ECP208}.

\bibitem[Konarovskyi et~al.(2020)Konarovskyi, Lehmann, and von
  Renesse]{KonarovskyiRenesse2}
V.~Konarovskyi, T.~Lehmann, and M.-K. von Renesse.
\newblock On {D}ean--{K}awasaki dynamics with smooth drift potential.
\newblock \emph{J. Stat. Phys.}, 178:\penalty0 666--681, 2020.
\newblock \doi{10.1007/s10955-019-02449-3}.

\bibitem[Landau and Lifshitz(1987)]{LandauLifshitzVol6}
L.~D. Landau and E.~M. Lifshitz.
\newblock \emph{Course of theoretical physics. {V}ol. 6}.
\newblock Pergamon Press, Oxford, second edition, 1987.
\newblock ISBN 0-08-033933-6; 0-08-033932-8.
\newblock Fluid mechanics.

\bibitem[McDiarmid(1998)]{mcdiarmid1998concentration}
C.~McDiarmid.
\newblock Concentration, probabilistic methods for algorithmic discrete
  mathematics, 195--248.
\newblock \emph{Algorithms Combin}, 16, 1998.

\bibitem[Morton and Mayers(2005)]{morton2005numerical}
K.~W. Morton and D.~F. Mayers.
\newblock \emph{Numerical solution of partial differential equations: an
  introduction}.
\newblock Cambridge university press, 2005.

\bibitem[Spohn(2012)]{SpohnBook}
H.~Spohn.
\newblock \emph{Large scale dynamics of interacting particles}.
\newblock Springer Science \& Business Media, 2012.

\bibitem[te~Vrugt et~al.(2020)te~Vrugt, L{\"o}wen, and
  Wittkowski]{te2020classical}
M.~te~Vrugt, H.~L{\"o}wen, and R.~Wittkowski.
\newblock Classical dynamical density functional theory: from fundamentals to
  applications.
\newblock \emph{Advances in Physics}, 69\penalty0 (2):\penalty0 121--247, 2020.
\newblock \doi{10.1080/00018732.2020.1854965}.

\end{thebibliography}

\end{document}